\tikzset{
  VertexStyle/.append style = {minimum size=5pt, inner sep=0pt,
                                font = \Large\bfseries},
  EdgeStyle/.append style = {} }
\tikzset{join/.code=\tikzset{after node path={%
\ifx\tikzchainprevious\pgfutil@empty\else(\tikzchainprevious)%
edge[every join]#1(\tikzchaincurrent)\fi}}}
\tikzset{>=stealth',every on chain/.append style={join},
         every join/.style={->}}
\tikzstyle{labeled}=[execute at begin node=$\scriptstyle,
\newcommand{\arXiv}[1]{\href{http://arxiv.org/abs/#1}{\tt arXiv:\nolinkurl{#1}}}
\newcommand{\arxiv}[1]{\href{http://arxiv.org/abs/#1}{\tt arXiv:\nolinkurl{#1}}}
\newcommand{\googlebooks}[1]{(preview at \href{http://books.google.com/books?id=#1}{google books})}
\definecolor{dark-red}{rgb}{0.7,0.25,0.25}
\definecolor{dark-blue}{rgb}{0.15,0.15,0.55}
\definecolor{medium-blue}{rgb}{0,0,.8}
\definecolor{DarkGreen}{RGB}{0,150,0}
\definecolor{lightred}{RGB}{255,99,99}
\definecolor{lightblue}{RGB}{99,99,255}
	\newcommand{\bigast}{\mathop{\mathlarger{\mathlarger{\Asterisk}}}}
\theoremstyle{plain}
\newtheorem{thm}{Theorem}[section]
\newtheorem*{thm*}{Theorem}
\newtheorem{cor}[thm]{Corollary}
\newtheorem*{cor*}{Corollary}
\newtheorem{lem}[thm]{Lemma}
\newtheorem{claim}{Claim}
\newtheorem{prop}[thm]{Proposition}
\newtheorem*{prop*}{Proposition}
\newtheorem*{quest*}{Question}
\theoremstyle{definition}
\newtheorem{defn}[thm]{Definition}
\newtheorem{nota}[thm]{Notation}
\newtheorem{rem}[thm]{Remark}
\DeclareMathOperator{\Gr}{Gr}
\DeclareMathOperator{\op}{op}
\DeclareMathOperator{\Tr}{Tr}
\newcommand{\comment}[1]{}
\newcommand{\be}{\begin{enumerate}[(1)]}
\newcommand{\ee}{\end{enumerate}}
\newcommand{\N}{\mathbb{N}}
\newcommand{\Z}{\mathbb{Z}}
\newcommand{\F}{\mathbb{F}}
\newcommand{\R}{\mathbb{R}}
\newcommand{\C}{\mathbb{C}}
\newcommand{\e}{\epsilon}
\newcommand{\noshow}[1]{}
\newcommand{\MR}[1]{}
\newcommand{\Asterisk}{\mathop{\scalebox{1.5}{\raisebox{-0.2ex}{$*$}}}}%
\newcommand{\vphi}{\varphi}
\newcommand{\TL}{\cT\hspace{-.08cm}\cL}
\newcommand{\mc}[1]{\mathcal{#1}}
\newcommand{\<}{\left\langle}
\renewcommand{\>}{\right\rangle}
\def\semicolon{;}
\def\applytolist#1{
    \expandafter\def\csname multi#1\endcsname##1{
        \def\multiack{##1}\ifx\multiack\semicolon
            \def\next{\relax}
        \else
            \csname #1\endcsname{##1}
            \def\next{\csname multi#1\endcsname}
        \fi
        \next}
    \csname multi#1\endcsname}
\def\calc#1{\expandafter\def\csname c#1\endcsname{{\mathcal #1}}}
\def\bbc#1{\expandafter\def\csname bb#1\endcsname{{\mathbb #1}}}
\def\bfc#1{\expandafter\def\csname bf#1\endcsname{{\mathbf #1}}}
\def\sfc#1{\expandafter\def\csname s#1\endcsname{{\sf #1}}}
\def\ffc#1{\expandafter\def\csname f#1\endcsname{{\mathfrak #1}}}
\tikzstyle{shaded}=[fill=red!10!blue!20!gray!30!white]
\tikzstyle{unshaded}=[fill=white]
\tikzstyle{empty box}=[circle, draw, thick, fill=white, opaque, inner sep=2mm]
\tikzstyle{annular}=[scale=.7, inner sep=1mm, baseline]
\tikzstyle{rectangular}=[scale=.75, inner sep=1mm, baseline=-.1cm]
\newcommand{\nbox}[6]{
	\draw[thick, #1] ($#2+(-#3,-#3)+(-#4,0)$) rectangle ($#2+(#3,#3)+(#5,0)$);
	\coordinate (ZZa) at ($#2+(-#4,0)$);
	\coordinate (ZZb) at ($#2+(#5,0)$);
	\node at ($1/2*(ZZa)+1/2*(ZZb)$) {#6};
}
\begin{document}

\title{Non-tracial free graph von Neumann algebras}
\author{Michael Hartglass\thanks{Department of Mathematics and Computer Science, Santa Clara University}  and Brent Nelson\thanks{Department of Mathematics, Vanderbilt University}}
\date{}
\maketitle

\begin{abstract}
\noindent
Given a finite, directed, connected graph $\Gamma$ equipped with a weighting $\mu$ on its edges, we provide a construction of a von Neumann algebra equipped with a faithful, normal, positive linear functional $(\cM(\Gamma,\mu),\vphi)$. When the weighting $\mu$ is instead on the vertices of $\Gamma$, the first author showed the isomorphism class of $(\cM(\Gamma,\mu),\vphi)$ depends only on the data $(\Gamma,\mu)$ and is an interpolated free group factor equipped with a scaling of its unique trace (possibly direct sum copies of $\bbC$). Moreover, the free dimension of the interpolated free group factor is easily computed from $\mu$. In this paper, we show for a weighting $\mu$ on the edges of $\Gamma$ that the isomorphism class of $(\cM(\Gamma,\mu),\vphi)$ depends only on the data $(\Gamma,\mu)$, and is either as in the vertex weighting case or is a free Araki--Woods factor equipped with a scaling of its free quasi-free state (possibly direct sum copies of $\bbC$). The latter occurs when the subgroup of $\R^+$ generated by $\mu(e_1)\cdots \mu(e_n)$ for loops $e_1\cdots e_n$ in $\Gamma$ is non-trivial, and in this case the point spectrum of the free quasi-free state will be precisely this subgroup.  As an application, we give the isomorphism type of some infinite index subfactors considered previously by Jones and Penneys.
\end{abstract}


\section*{Introduction}

Given a finite, directed, connected graph $\Gamma = (V,E)$, there has been for some time an interest in von Neumann algebras associated to this initial data. More precisely, an interest in a von Neumann algebra generated by projections $p_v$ with $v\in E$ and operators $Y_e$ with $e\in E$ satisfying algebraic relations that encode the structure of the graph. In \cite{MR2732052,MR2807103}, Guionnet, Jones, and Shlyakhtenko used free probabilistic methods to construct $\mathrm{II}_1$-factors associated to the principal graph of a subfactor planar algebra, and used this construction to give a new proof of \cite[Theorem 3.1]{popa1}. Since these $\mathrm{II}_1$ factors contain the projections $p_v$, $v\in V$, the unique tracial state $\tau$ induces a weighting  on the vertices $\mu\colon V\to \bbR^+$ given by $\mu(v):=\tau(p_v)$. In \cite{MR3110503}, the first author showed that these $\mathrm{II}_1$ factors are always interpolated free group factors. Moreover, a more general construction was provided for von Neumann algebras with tracial states $(\cM(\Gamma, \mu),\tau)$ associated to graphs $\Gamma$ and vertex weightings $\mu$. The isomorphism class of $(\cM(\Gamma,\mu),\tau)$ is completely determined by $\mu$, and is always that of an interpolated free group factor (with their unique tracial state) possibly direct sum copies of $\bbC$. In particular, these graph von Neumann algebras provide a convenient presentation of the interpolated free group factors (see also \cite{MR3679687}), which the authors used in \cite{MR3718052} to adapt the free transport results of \cite{MR3251831} to the context of interpolated free group factors. 

The goal of the present paper, is to modify the construction in \cite{MR3110503} in order to produce von Neumann algebras naturally equipped with non-tracial states, and then determine their isomorphism class. It turns out that in order to move beyond tracial states, one must replace the vertex weighting with an \emph{edge weighting} $\mu\colon E\to \bbR^+$ satisfying $\mu(e^{\op})=\mu(e)^{-1}$ for all $e\in E$. In fact, this insight was actually previously observed by Jones and Penneys in \cite{JP17}, where they studied a loop von Neumann algebra $\cM$ with an infinite index subfactor $\cN \subset \cM$ where $L(\F_{\infty}) \cong \cN \subset \cM^{\vphi}$. They showed that the associated factor $\cM$ is type $\mathrm{III}$ precisely when the group
	\[
		\< \mu(e_1)\cdots \mu(e_n)\colon e_1\cdots e_n\text{ is a loop in }\Gamma\> < \bbR^+
	\]
is non-trivial, and that the inclusion $\cN \subset \cM$ is irreducible and discrete. 

One complication that arises when using an edge weighting to construct a von Neumann algebra associated to a graph $\Gamma$ is that there is no longer a canonical state associated with $\cM(\Gamma,\mu)$, though this is to be expected precisely in the non-tracial case. Fortunately, due to the condition $\mu(e^{\op})=\mu(e)^{-1}$, $\Gamma$ admits a large subgraph which is essentially ``tracial'' with respect to the weighting $\mu$, and which in turn allows us to define a state on the algebra generated by the projections $p_v$, $v\in V$. In this way we are able to construct a von Neumann algebra with a faithful state $(\cM(\Gamma,\mu),\vphi)$ associated to the inital data $(\Gamma,\mu)$. Moreover, we show that these pairs are always isomorphic to an almost periodic \emph{free Araki--Woods factor} equipped with its \emph{free quasi-free state} possibly direct sum copies of $\bbC$.

Defined in \cite{Shl97}, Shlyakhtenko's free Araki--Woods factors can be regarded as the non-tracial analogue of the free group factors. Indeed, both arise from a standard free probabilistic construction using creation and annihilation operators on a Fock space, and the free quasi-free state is given by the vacuum vector state, which in the tracial case yields the unique tracial state. 
Importantly, free Araki--Woods factors admit \emph{matricial models} (see \cite[Section 5]{Shl97}), which are---roughly speaking---amplified representations of the factor. The flexibility of such representations is quite powerful and allows one to show that the free Araki--Woods factors have the so-called \emph{free absorption} property: they are stable under free products with the free group factors (see \cite[Corollary 5.5]{Shl97}). We utilize this property frequently in the present paper, and moreover use a matricial model to show that the isomorphism class of free Araki--Woods factors with their free quasi-free states is stable under compressions and amplifications by projections in the centralizer with full central support.

Our general strategy for studying $(\cM(\Gamma,\mu),\vphi)$ is to consider a compression by a vertex projection $p_v$, $v\in V$. Dykema's free product techniques from \cite{MR1201693} allow us to analyze such compressions. The aforementioned stability properties allow us to assert that not only are the compressions free Araki--Woods factors (possibly direct sum copies of $\bbC$), but so is original von Neumann algebra. Of course, great care must be taken to ensure every isomorphism is state preserving.  Additionally, this graphical picture of the free Araki--Woods factors can be used to study free products of arbitrary finite dimensional von Neumann algebras, which the authors pursue in a second paper \cite{Paper2}. 

The structure of the paper is as follows. In Section~\ref{sec:prelim}, we establish notation for states and positive linear functions we will frequently use; we recall the definition of a free Araki--Woods factors and discuss their structure; we recall some existing results due to Dykema (\cite{MR1201693, Dyk97}), Houdayer (\cite{Hou07}), and Shlyakhtenko (\cite{Shl97}) that will be frequently cited; and we recall the Fock-space  and Toeplitz algebra associated to a directed, connected graph $\Gamma$. This Toeplitz algebra is the foundation upon which $\cM(\Gamma,\mu)$ is built, in both the present case and in the vertex weighting case considered in \cite{MR3110503}.

In Section~\ref{sec:adding_weighting_to_edges} we present the construction of the von Neumann algebra $\cM(\Gamma,\mu)$ and a faithful, normal, positive linear functional $\vphi$. The latter is (non-canonically) defined by considering a (non-unique) maximal subraph of $\Gamma$ subject to the condition that $\mu(e_1)\cdots \mu(e_n)=1$ for all loops $e_1\cdots e_n$ in the subgraph. This maximal subgraph then corresponds to a tracial subalgebra of $\cM(\Gamma,\mu)$, which can be classified using \cite{MR3110503}. This section then concludes with an analysis of the central supports of certain projections in the centralizer $\cM(\Gamma,\mu)^\vphi$, which is a crucial part of our aforementioned strategy for studying $(\cM(\Gamma,\mu),\vphi)$. 

In Section~\ref{sec:technical_tools}, we present some technical results, including our matricial model and some essential compression/amplification lemmas. 

In Section~\ref{sec:building_the_graph}, we undertake the analysis of $(\cM(\Gamma,\mu),\vphi)$. We first consider a cyclic subgraph $\Gamma_0$ of $\Gamma$ and the corresponding subalgebra $\cM(\Gamma_0,\mu)$, which we show has a free Araki--Woods factor as its diffuse component. We then ``build'' $\Gamma$ from $\Gamma_0$ by succesively adding edges (and sometimes vertices), and since we are able to control the isomorphism classes of the corresponding subalgebras along the way we are therefore able to deduce the isomorphism class of $(\cM(\Gamma,\mu),\vphi)$. As an application, we classify the aforementioned subfactors considered by Jones in Penneys in \cite{JP17}. This is presented in the Appendix, along with a summary of notation that we have compiled for the convenience of the reader.


\subsection*{Acknowledgements} 

We would like to thank Corey Jones and David Penneys for initially suggesting the idea of this paper. We would also like to thank Dimitri Shlyakhtenko for many helpful conversations about free Araki--Woods factors.  This work was initiated at the Mathematical Sciences Research Institute (MSRI) Summer School on \emph{Subfactors: planar algebras, quantum symmetries, and random matrices}, and continued while Brent Nelson was visiting the Institute for Pure and Applied Mathematics (IPAM) during the Long Program on \emph{Quantitative Linear Algebra}, both of which are supported by the National Science Foundation. Brent Nelson's work was also supported by NSF grant DMS-1502822.


\section{Preliminaries}\label{sec:prelim}


\subsection{Status quo}\label{subsec:status_quo}

Given the non-tracial nature of our analysis, it is important that we specify the positive linear functionals involved in any free product. Toward that end, we establish some common notation for positive linear functionals that will be frequently used:

	\begin{itemize}
	\item\label{notation:implicit_states} After \cite{MR1201693,Dyk97} we use the following notation:
		\begin{itemize}
		\item[$\circ$] For $t>0$ and a projection $p$ 
			\[
				\overset{p}{\underset{t}{\bbC}}:=( \bbC p, \phi),
			\]
		where $\phi$ is determined by $\phi(p)=t$. We may suppress either `$t$' or `$p$' if they are clear from context. In the context of a direct sum, if $t\leq 0$ then we mean that the summand should be omitted.
		
		\item[$\circ$] For $s,t>0$ and $v$ a partial isometry such that $v^*v=p$ and $vv^*=q$ are orthogonal,
			\[
				\overset{p,q}{\underset{s,t}{M_2(\bbC)}}:= ( \bbC\<v\>, \phi),
			\]
		where $\phi$ is determined by $\phi(v)=0$, $\phi(p)=s$, and $\phi(q)=t$. We may suppress any of `$t$',`$s$', `$p$', or `$q$' if they are clear from context. If we merely wish to establish notation for the identity element $r:=p+q$, then we may simply write $\overset{r}{\underset{s,t}{M_2(\bbC)}}$.
		
		\item[$\circ$] With $p$, $q$, $s$, $t$, and $v$ as in the previous bullet point, we denote
		\[
			\overset{p,q}{\underset{s,t}{M_2(L(\Z))}}:= ( \bbC\<v\>, \phi) \otimes (L(\Z), \tau)
		\]	
		with $\tau$ the canonical group-algebra tracial state on $L(\Z)$.
		
		\item[$\circ$] For $t>0$ and a von Neumann algebra $A$ with identity element $p$ and a state $\phi$
			\[
				\overset{p}{\underset{t}{(A,\phi)}} := (A,t \phi).
			\]
		We may suppress any of `$t$', `$p$', or `$\phi$' if they are clear from context (e.g. a $\mathrm{II}_1$ factor and its canonical trace).
		\end{itemize}
	The above notations allow us to concisely express direct sums with explicit (and sometimes implicit) weightings. E.g.:
		\[
			\overset{p_1}{\underset{t_1}{\bbC}}\oplus \overset{p_2,q_2}{\underset{s_2,t_2}{M_2(\bbC)}}\oplus \overset{p_3}{\underset{t_3}{(A,\vphi)}}.
		\]
	If $t_1+s_2+t_2+t_3=1$ then the associated positive linear functional on this direct sum is a state. However, it will often be notationally convenient to \textbf{not} demand such normalization. If such an unnormalized direct sum appears in a free product, we will ensure that each factor in the free product has the same total mass.
	
	\item\label{notation:psi} Let $\cH$ be a separable infinite-dimensional Hilbert space, and $\{e_{i,j}\}_{i,j\in \bbN_0}$ be a system of matrix units for $\cB(\cH)$. For $\lambda\in(0,1)$, after \cite{Shl97} we define a state $\psi_\lambda\colon \cB(\cH)\to\bbC$ by
		\[
			\psi_\lambda(e_{i,j}):=\begin{cases} \lambda^i(1-\lambda) & \text{if }i=j\\ 0 & \text{otherwise}\end{cases}.
		\]
	If $\cH$ is finite dimensional so that $\cB(\cH)\cong M_n(\bbC)$ is generated by matrix units $\{e_{i,j}\}_{i,j=0}^{n-1}$, for some $n\in \bbN$, we define a state $\psi_\lambda\colon M_n(\bbC)\to\bbC$ by
		\[
			\psi_\lambda(e_{i,j}):=\begin{cases} \lambda^i \frac{(1-\lambda)}{(1-\lambda^n)} & \text{if }i=j\\ 0 & \text{otherwise}\end{cases}.
		\]

	\item\label{notation:state_compression} For a von Neumann algebra $A$ with a positive linear functional $\phi$ and a non-zero projection $p\in A$, denote
		\[
			\phi^p(\ \cdot\ ) := \frac{1}{\phi(p)} \phi(p\ \cdot\ p).
		\]
	\end{itemize}


\subsection{Free Araki-Woods factors}

We recall the construction of Shlyakhtenko's free Araki--Woods factors \cite{Shl97} and their salient properties.

Fix a real Hilbert space $\cH_\bbR$ along with an orthogonal representation $\{U_t\}_{t\in \bbR}$ of $\bbR$ on $\cH_\bbR$. Extend this orthogonal representation to a unitary one on $\cH_\bbC:=\cH_\bbR\otimes \bbC$, the complexification of $\cH_\bbR$. Invoke Stone's theorem to produce an infinitesimal generator: there exists a (potentially unbounded) positive, non-singular, self-adjoint operator $A$ such that $A^{it}=U_t$ for all $t\in \bbR$. For $v,w\in \cH_\bbC$, define an new inner product by:
	\[
		\<v,w\>_U : = \<\frac{2}{1+A^{-1}} v, w\>_{\cH_\bbC},
	\]
which is $\bbC$-linear in the right entry. For $v,w\in\cH_\bbR$ it follows that $\text{Re}{\<v,w\>_U} = \<v,w\>_{\cH_\bbR}$. In particular, if $v$ and $w$ are orthogonal in $\cH_\bbR$, then $\<v,w\>_U\in i\bbR$. Denote the completion of $\cH_\bbC$ under this inner product by $\cH$.

Next, the Fock space generated by $\cH$, denoted $\cF(\cH)$, is the completion of
	\[
		\bbC\Omega\oplus \bigoplus_{d\geq 1} \cH^{\otimes d},
	\]
where $\Omega$, a unit vector, is called the \emph{vacuum vector}. Let $\omega(\,\cdot\,):=\<\Omega,\,\cdot\,\Omega\>$ denote the vacuum vector state. For any $v\in \cH$, one can define its \emph{left creation operator} $\ell(v)\in \cB(\cF(\cH))$ by
	\begin{align*}
		\ell(v)& \Omega = v\\
		\ell(v)& w_1\otimes\cdots \otimes w_d = v\otimes w_1 \otimes \cdots \otimes w_d.
	\end{align*}
Its adjoint $\ell(v)^*$, is called the \emph{left annihilation operator}, is determined by
	\begin{align*}
		\ell(v)^*&\Omega = 0\\
		\ell(v)^*& w_1\otimes \cdots \otimes w_d = \<v,w_1\> w_2\otimes \cdots \otimes w_d.
	\end{align*}
Denote $s(v):=\ell(v)+\ell(v)^*$.

The von Neumann algebra
	\[
		\Gamma(\cH_\bbR,U_t)'' := W^*(s(v)\colon v\in \cH_\bbR)
	\]
is called a \emph{free Araki--Woods factor}. The restriction of $\omega$ to this von Neumann algebra, which we denote by $\vphi$, is called the \emph{free quasi-free state}. This state is tracial if and only if $\{U_t\}_{t\in\bbR}$ is the trivial representation. Otherwise, the modular automorphism group $\sigma^\vphi=\{\sigma_t^\vphi\}_{t\in\bbR}$ acts by
	\[
		\sigma_t^\vphi(s(v)) = s(U_{-t} v).
	\]
In this paper, we will be concerned mainly with free Araki--Woods factors whose free quasi-free states are \emph{almost perdioc}, meaning that the modular operator $\Delta_\vphi$ is diagonalizable.  In particular, this arises when the factor is generated by generalized circular elements.
	
\subsubsection*{Generalized Circular Elements}
For any $v\in \cH$, the operators $s(v)$ (with respect to $\vphi$) have a semicircular distribution with mean zero and variance $\|v\|^2$. However, in contrast to the tracial case, if $v,w\in\cH_\bbR$ are orthogonal then $s(v)$ and $s(w)$ are not necessarily free with respect to $\vphi$. Somewhat easier to work with are the so-called \textit{generalized circular elements}: 
	\[
		\ell(g) +\sqrt{\lambda} \ell(h)^*
	\]
where $g,h\in \cH$ are orthogonal unit vectors and $0\leq \lambda \leq 1$. For orthogonal unit vectors $v,w\in \cH_\bbR$ consider:
	\[
		\lambda:= \frac{i+\<v,w\>_U}{i-\<v,w\>_U}
	\]
and
	\[
		g:=\frac{\sqrt{1+\lambda}}{2}(v+iw)\qquad h:= \frac{\sqrt{1+\lambda}}{2\sqrt{\lambda}} (v-iw).
	\]
Then one easily checks that $g$ and $h$ are orthogonal unit vectors in $\cH$ such that
	\[
		\frac{s(v)+is(w)}{2} = \frac{1}{\sqrt{1+\lambda}}(\ell(g) + \sqrt{\lambda} \ell(h)^*).
	\]
Consequently, $\Gamma(\cH_\bbR,U_t)''$ is generated by generalized circular elements. Moreover, if $\text{span}\{v,w\}$ is invariant under $\{U_t\}_{t\in \R}$ then one can show that
	\[
		\sigma_t^\vphi( \ell(g) + \sqrt{\lambda} \ell(h)^*)=\lambda^{it}(\ell(g) + \sqrt{\lambda} \ell(h)^*)
	\]
(see \cite[Proposition 2.1]{Nel17}). That is, $\ell(g)+\sqrt{\lambda}\ell(h)^*$ is an \textit{eigenoperator} with \textit{eigenvalue} $\lambda$ (see \cite[Definition 2.5]{Nel17}).

\subsubsection*{Two Variable Case}
Given $\lambda\in (0,1]$, define an orthogonal representation of $\bbR$ on $\cH_\bbR=\bbR^2$ by
	\[
		U_t = \left[\begin{array}{cc} \cos(t \ln(\lambda)) & -\sin(t\ln(\lambda)) \\ \sin(t\ln(\lambda)) & \cos(t \ln(\lambda)) \end{array}\right]\qquad t\in \bbR.
	\]
Then $\Gamma(\cH_\bbR,U_t)''$ is denoted $T_\lambda$\label{notation:fAWf} and its free quasi-free state is denoted $\vphi_\lambda$. If $\lambda=1$, then $(T_\lambda, \vphi_\lambda)\cong (L(\bbF_2),\tau)$.

For $e_1,e_2\in \R^2$ the usual orthonormal basis, we denote associated generalized circular element by
	\[\label{notation:gce}
		y_\lambda := \frac{\sqrt{1+\lambda}}{2}(s(e_1) +i s(e_2))
	\]
If we $y_\lambda=v_\lambda |y_\lambda|$ be the polar decomposition, then $v_y$ and $|y_\lambda|$ are free with with respect to $\vphi_\lambda$ by \cite[Theorem 4.8]{Shl97}. Moreover, $|y_\lambda|$ is diffuse and $v_\lambda$ is an isometry (non-unitary when $\lambda<1$): $\vphi_\lambda(v_\lambda^*v_\lambda)= 1$ and $\varphi_\lambda(v_\lambda v_\lambda^*)=\lambda$. From this we obtain the following picture:
	\[
		(T_\lambda, \vphi_\lambda) \cong (L(\bbZ),\tau) * (\cB(\ell^2(\N_{0})),\psi_\lambda).
	\]
It is also known that the law of $|y_\lambda^*|$ has an atom of size $1-\lambda$ at zero, but is otherwise diffuse so that:
	\[
		(W^*(|y_\lambda^*|),\vphi_\lambda)\cong \underset{\lambda}{(L(\bbZ),\tau)}\oplus \underset{1-\lambda}{\bbC}
	\]
(see \cite[Remark 4.4]{Shl97}).

For $\lambda>1$, we write $(T_\lambda,\vphi_\lambda):=(T_{\lambda^{-1}},\vphi_{\lambda^{-1}})$, and $y_\lambda:=y_{\lambda^{-1}}^*$. In this way, we have that for any $\lambda>0$, $y_\lambda$ is eigenoperator of $\vphi_\lambda$ with eigenvalue $\lambda$ and that $T_\lambda=W^*(y_\lambda)$. Additionally, if $H$ is a countable subgroup of $\bbR^+$ then (after \cite{Hou07}) we denote
		\[\label{notation:fAWfH}
			(T_H,\vphi_H):= \bigast_{\lambda\in H}(T_\lambda,\varphi_\lambda).
		\]
By \cite[Theorem 6.4]{Shl97}, $(T_H,\vphi_H)$ depends only on $H$ so that $(T_H,\vphi_H)*(T_\lambda,\vphi_\lambda) \cong (T_H,\vphi_H)$ for all $\lambda\in H$. In addition, the above free product can be taken over any generating set of $H$, i.e. one has 
		\[
			(T_H,\vphi_H)\cong \bigast_{\lambda\in S}(T_\lambda,\varphi_\lambda)
		\]
for any generating set $S$ of $H$.


\subsection{References to existing results}

For the convenience of the reader, we state here some existing results that will be cited frequently in the present paper. Where appropriate, we have adapted the notation. In particular, for $M$ a von Neumann algebra and $p\in M$ a projection, we denote the central support of $p$ in $M$ by $z(p\colon M)$\label{notation:central_support}.

The first lemma concerns free products with respect to general states and follows from the same proof as \cite[Theorem 1.2]{MR1201693} (see also \cite[Proposition 5.1]{Dyk97} and \cite[Proposition 3.10]{Hou07}). In particular, we will frequently use the cases when either $\cB(\cH)=\bbC$ or $B=0$.

\begin{lem}\label{lem:Dykema}
Let $(A,\phi)$, $(B,\psi)$, and $(C,\nu)$ be von Neumann algebras equipped with faithful normal states. Let $\cH$ be a separable Hilbert space, equip $\cB(\cH)$ with a faithful normal state $\omega$, and let $p\in \cB(\cH)^\omega$ be a minimal projection. If
	\begin{align*}
		(M,\vphi)&:= \left[ \left\{(A,\phi)\bar{\otimes} (\cB(\cH),\omega)\right\} \oplus (B,\psi)\right] * (C,\nu)\\
		(N,\vphi)&:= \left[ (\cB(\cH),\omega) \oplus (B,\psi)\right]* (C,\nu),
	\end{align*}
then
	\[
		(pMp,\vphi^p) \cong (pNp,\varphi^p) * (A,\phi).
	\]
Moreover, $z(p\colon M)=z(p\colon N)$.
\end{lem}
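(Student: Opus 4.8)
The plan is to reduce the statement to Dykema's original result in \cite{MR1201693} by the same manipulation used there, keeping careful track of states. The key observation is that $\cB(\cH)$ can be absorbed via a matrix-unit argument: if $p \in \cB(\cH)^\omega$ is minimal, then $p$ has full central support in the factor $\cB(\cH)$, and $p \cB(\cH) p = \bbC p$. So after compressing by $p$, the tensor factor $\cB(\cH)$ contributes only the scalar $p$, but the interaction of $\cB(\cH)$ with $(C,\nu)$ inside the free product is exactly what produces the ``extra'' structure recorded in $pNp$.

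First I would recall the structure of a free product of the form $\left[D \oplus (B,\psi)\right] * (C,\nu)$ where $D$ is a type I factor with a faithful normal state. By \cite[Theorem 1.2]{MR1201693} (in the state-preserving generality of \cite[Proposition 5.1]{Dyk97} and \cite[Proposition 3.10]{Hou07}), compressing by a minimal projection $p$ of $D$ yields a von Neumann algebra whose isomorphism type, together with the compressed state, is computed from the free product of $(C,\nu)$ with the corner. Concretely, writing $D = \cB(\cH)$ with matrix units $\{e_{ij}\}$ and $p = e_{00}$, one has $pMp$ generated by $p(C)p$ together with the ``off-diagonal'' words $e_{0i} c e_{i0}$; the standard computation identifies $(pMp,\vphi^p)$ with $(pNp,\vphi^p) * (A,\phi)$, since the tensor factor $(A,\phi)$ is ``transported'' to the corner $e_{00}(A \bar\otimes \cB(\cH))e_{00} \cong (A,\phi)$ and, crucially, $A$ sits freely from $p(C)p$ and from the off-diagonal words inside $pMp$ — this freeness is the content of Dykema's free-etc computation and is where the real work lies. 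I would set this up by verifying the joint $*$-distribution of the three families (a copy of $A$, a copy of $pCp$ coming through $N$'s recipe, and the off-diagonal corrections) matches the free-product distribution, using the fact that $\omega$ restricted to the matrix units is a faithful normal state and the moment computations only depend on $\phi,\psi,\nu,\omega(p)$.

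For the central support claim, I would argue as follows. Both $M$ and $N$ are free products of $(C,\nu)$ with an algebra of the form $E \oplus (B,\psi)$ where $E = A \bar\otimes \cB(\cH)$ in the first case and $E = \cB(\cH)$ in the second; in both cases $p$ is a minimal projection of the type I factor summand $E$ (minimal because $p$ is minimal in $\cB(\cH)$ and $A \bar\otimes \cB(\cH)$ has the same projection lattice ``direction'' for such $p$ up to the diffuse factor — more precisely $z(p : E) = 1_E$ in both cases). The central support of $p$ in the free product depends only on $\psi$, $\nu$, and the quantity $\phi(1_E)\,\omega(p)$ (equivalently the mass $\vphi(p)$) relative to the masses of $B$ and $C$; since these data coincide for $M$ and $N$ (the tensor factor $A$ does not change the mass of $p$ because $\phi$ is a state, $\phi(1_A)=1$), we get $z(p:M) = z(p:N)$. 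The cleanest way to see this is to note that $z(p:M)$ is the supremum of all projections in $M$ equivalent to subprojections of $p$-bounded pieces, and this equivalence class is determined by the ``relative position'' data that is manifestly the same in $M$ and $N$; alternatively one invokes the explicit formula for $z(p:\,\cdot\,)$ appearing in the proof of \cite[Theorem 1.2]{MR1201693}.

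**The main obstacle** I anticipate is bookkeeping the states through the isomorphism: Dykema's original arguments are often phrased for traces or for states where the relevant algebras are in ``standard position,'' and here we must certify that the isomorphism $(pMp,\vphi^p) \cong (pNp,\vphi^p)*(A,\phi)$ is genuinely state-preserving, including on the $(A,\phi)$ factor (not merely up to an automorphism scaling the state). Concretely, I would need to check that the embedding $A \hookrightarrow pMp$ via $a \mapsto$ (the corner image of $a \otimes p$) pulls $\vphi^p$ back to exactly $\phi$ — this follows because $\vphi$ restricted to $A \bar\otimes \cB(\cH)$ is $\phi \otimes \omega$ and $\vphi^p$ on the corner is $\frac{1}{\omega(p)}(\phi\otimes\omega)(p \cdot p)$, which on $a \otimes p$ gives $\phi(a)\frac{\omega(p)}{\omega(p)} = \phi(a)$ — and that the freeness is with respect to the compressed state, which is exactly the hypothesis under which \cite[Theorem 1.2]{MR1201693} and its state-preserving refinements are proved. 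Once these compatibilities are in place, the statement follows directly from the cited results with $\cB(\cH)$ and $B$ specialized as indicated in the remark preceding the lemma.
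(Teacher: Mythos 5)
Your proposal is correct and takes essentially the same route as the paper, which offers no independent argument for this lemma but simply notes that it ``follows from the same proof as \cite[Theorem 1.2]{MR1201693}'' together with its state-preserving refinements in \cite[Proposition 5.1]{Dyk97} and \cite[Proposition 3.10]{Hou07}; your outline (matrix-unit compression, transporting $A$ to the corner $e_{00}(A\bar\otimes\cB(\cH))e_{00}$, checking $\vphi^p$ restricts to $\phi$ there, and reading off the central support from the fact that the relevant weight data are identical for $M$ and $N$) is precisely the adaptation the authors have in mind.
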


The next two lemmas more specifically concern free Araki--Woods factors. The following is shown explicitly in the proof of \cite[Theorem 3.1]{Hou07} when $\cH$ is infinite dimensional, but using \cite[Proposition 6.9]{Shl97} the proof for the finite dimensional case is identical.

\begin{lem}\label{lem:mini_absorption}
Let $\lambda\in(0,1)$ and let $t\in [0,1]$. Let $\cH$ be a separable Hilbert space, and equip $\cB(\cH)$ with the state $\psi_{\lambda}$ (relative to some choice of matrix units). Then
	\[
		(L(\bbZ),\tau)) * \left[ (\cB(\cH),\psi_\lambda) \otimes (\underset{t}{\bbC}\oplus \underset{1-t}{\bbC})\right] \cong (T_\lambda,\vphi_\lambda).
	\]
\end{lem}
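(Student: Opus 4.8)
The plan is to reduce the claimed isomorphism to the known two-variable picture
	\[
		(T_\lambda,\vphi_\lambda)\cong (L(\bbZ),\tau)*(\cB(\ell^2(\N_0)),\psi_\lambda)
	\]
recorded in the preliminaries, together with Dykema's free product machinery (Lemma~\ref{lem:Dykema}). First I would treat the ``trivial'' end cases $t=0$ and $t=1$: when $t\in\{0,1\}$ the factor $\underset{t}{\bbC}\oplus\underset{1-t}{\bbC}$ is just $\bbC$ (one summand is omitted per the convention in Section~\ref{subsec:status_quo}), so $(\cB(\cH),\psi_\lambda)\otimes(\underset{t}{\bbC}\oplus\underset{1-t}{\bbC})\cong(\cB(\cH),\psi_\lambda)$ and the statement is exactly the quoted picture of $T_\lambda$ (in the infinite-dimensional case) or follows from it by the free-absorption remarks in the two-variable discussion (since $\cB(\cH)$ finite dimensional with state $\psi_\lambda$ embeds state-preservingly and with full central support). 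So the content is in $t\in(0,1)$.

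For $t\in(0,1)$, the idea is to recognize $(\cB(\cH),\psi_\lambda)\otimes(\underset{t}{\bbC}\oplus\underset{1-t}{\bbC})$ as itself of the form $(\cB(\cK),\psi_\lambda)$ for a new Hilbert space $\cK$, \emph{after} a suitable rescaling/reindexing of the matrix units, OR — more robustly — to apply Lemma~\ref{lem:Dykema} directly. Taking the second route: write the left-hand algebra in the shape needed by Lemma~\ref{lem:Dykema} with $A=\underset{t}{\bbC}\oplus\underset{1-t}{\bbC}$ (a two-dimensional abelian algebra with its indicated state), $B=0$, $C=L(\bbZ)$, and $\omega=\psi_\lambda$ on $\cB(\cH)$; pick a minimal projection $p\in\cB(\cH)^{\psi_\lambda}=\cB(\cH)^{\psi_\lambda}$, say $p=e_{0,0}$. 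Lemma~\ref{lem:Dykema} then gives
	\[
		(pMp,\vphi^p)\cong (pNp,\vphi^p)*(\underset{t}{\bbC}\oplus\underset{1-t}{\bbC}),
	\]
where $M$ is the left-hand side of the asserted isomorphism and $N=(\cB(\cH),\psi_\lambda)*(L(\bbZ),\tau)\cong(T_\lambda,\vphi_\lambda)$. So I must identify $(pNp,\vphi^p)$: compressing $(T_\lambda,\vphi_\lambda)$ by the minimal centralizer projection $e_{0,0}$ (which has full central support since $T_\lambda$ is a factor) should, via the compression/amplification behavior of free Araki--Woods factors highlighted in the introduction and the matricial-model discussion, again be $(T_\lambda,\vphi^p_\lambda)$ up to the scaling of the state — i.e. $(pT_\lambda p,\vphi_\lambda^{e_{0,0}})\cong(T_\lambda,\vphi_\lambda)$. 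Combining, $(pMp,\vphi^p)\cong(T_\lambda,\vphi_\lambda)*(\underset{t}{\bbC}\oplus\underset{1-t}{\bbC})$, and by free absorption of two-dimensional abelian subalgebras into $T_\lambda$ (again \cite[Corollary 5.5]{Shl97}-type reasoning, or absorbing the $\bbC\oplus\bbC$ into the copy of $L(\bbZ)$) this is $(T_\lambda,\vphi_\lambda)$; then $z(p\colon M)=z(p\colon N)=1$, so $M$ itself is a factor and, since $pMp\cong T_\lambda$ with $p$ having full central support in $M$, the compression/amplification stability of free Araki--Woods factors gives $(M,\vphi)\cong(T_\lambda,\vphi_\lambda)$.

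The main obstacle I anticipate is the bookkeeping with states under compression: Lemma~\ref{lem:Dykema} produces an isomorphism of the \emph{compressed} pair $(pMp,\vphi^p)$, and to conclude about $(M,\vphi)$ I need that ``being $(T_\lambda,\vphi_\lambda)$'' is detected by a minimal-centralizer-projection compression together with the condition that the compression of the free quasi-free state rescales to a free quasi-free state — this is exactly the amplification/compression stability the introduction promises will be proved via the matricial model, so I would invoke that (once available) as the key black box. Secondary care is needed in the finite-dimensional $\cB(\cH)\cong M_n(\bbC)$ case, where the minimal projection $p=e_{0,0}$ has mass $\psi_\lambda(e_{0,0})=\tfrac{1-\lambda}{1-\lambda^n}$ and one must check the normalizations in the free product of $M$ with $L(\bbZ)$ match (the convention at the end of the ``implicit states'' notation that free products be taken with equal total mass handles this), and where $(\cB(\cH),\psi_\lambda)*(L(\bbZ),\tau)\cong(T_\lambda,\vphi_\lambda)$ is justified by \cite[Proposition 6.9]{Shl97} exactly as in the cited proof of \cite[Theorem 3.1]{Hou07}. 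Modulo these normalization checks and the compression-stability black box, the argument is a direct application of Lemma~\ref{lem:Dykema}.
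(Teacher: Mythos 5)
Your overall strategy---Lemma~\ref{lem:Dykema} with $A=\underset{t}{\bbC}\oplus\underset{1-t}{\bbC}$, $B=0$, $C=L(\bbZ)$ and $p=e_{0,0}$, followed by a compression identification, absorption of the two-dimensional algebra, and an amplification back up---is sound in outline, and your handling of the degenerate cases $t\in\{0,1\}$ and of $(T_\lambda,\vphi_\lambda)*(\underset{t}{\bbC}\oplus\underset{1-t}{\bbC})\cong(T_\lambda,\vphi_\lambda)$ is fine. The genuine problem is circularity. The two black boxes you invoke---that $(pT_\lambda p,\vphi_\lambda^p)\cong(T_\lambda,\vphi_\lambda)$ for a centralizer projection $p$, and that $(pMp,\vphi^p)\cong(T_\lambda,\vphi_\lambda)$ together with $z(p\colon M)=1$ forces $(M,\vphi)\cong(T_\lambda,\vphi_\lambda)$---are precisely Lemmas~\ref{lem:antman1} and~\ref{lem:antman2}, and in this paper both are proved \emph{using} Lemma~\ref{lem:mini_absorption}: each case of the proof of Lemma~\ref{lem:antman1} concludes with ``the desired isomorphism then follows from Lemma~\ref{lem:mini_absorption},'' and Lemma~\ref{lem:antman2} rests on Lemma~\ref{lem:antman1}. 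So ``once available'' is exactly the issue: in the paper's logical order these tools become available only after the present lemma. As written, your argument does not prove the statement.

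The paper sidesteps this by not reproving the lemma at all: it is established directly in the proof of \cite[Theorem 3.1]{Hou07} when $\cH$ is infinite dimensional (with \cite[Proposition 6.9]{Shl97} covering the finite-dimensional case), via an explicit identification of the compression by a minimal projection as a free family of generalized circular elements. Your route can be repaired along the same lines: because $p=e_{0,0}$ is \emph{minimal}, the corner $p\,\cB(\cH)\,p=\bbC p$ is trivial, so the matricial-model computation identifying $pNp$ as $\bigast_{i,j}(T_{\lambda^{j-i}},\vphi_{\lambda^{j-i}})\cong(T_\lambda,\vphi_\lambda)$ closes without any appeal to Lemma~\ref{lem:mini_absorption} (this is exactly the step in the proof of Lemma~\ref{lem:antman1} that does not need the absorption of the residual matrix corner). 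One then still needs an amplification argument independent of Lemma~\ref{lem:antman2}, e.g.\ the eigenoperator-isometry argument via \cite[Lemma 4.9]{Dyk97} that reduces $M$ to a corner of $pMp$. At that point you have essentially reconstructed Houdayer's proof, which is why the paper simply cites it.
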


The next lemma establishes a property of the free Araki--Woods factors known as \emph{free absorption}, and this is how we shall refer to it throughout the present paper. The proof uses a ``matricial model'' for $\Gamma(U_t,\cH_\bbR)''$, which we present a slightly modified version of in Subsection~\ref{subsec:matrix_model}.

\begin{lem}[{\cite[Corollary 5.5]{Shl97}}]
For any $\lambda\in (0,1)$ and any $t\geq 1$,
	\[
		(T_\lambda,\vphi_\lambda) * (L(\bbF_t),\tau) \cong (T_\lambda,\vphi_\lambda).
	\]
\end{lem}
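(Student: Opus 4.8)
The plan is to reduce the claim to the known two-variable picture $(T_\lambda,\vphi_\lambda) \cong (L(\bbZ),\tau) * (\cB(\ell^2(\N_0)),\psi_\lambda)$ together with the stated generators: recall that $T_\lambda = W^*(y_\lambda)$, that $y_\lambda = v_\lambda|y_\lambda|$ with $v_\lambda$ and $|y_\lambda|$ free, $|y_\lambda|$ diffuse, and $v_\lambda$ a non-unitary isometry with $\vphi_\lambda(v_\lambda^*v_\lambda)=1$, $\vphi_\lambda(v_\lambda v_\lambda^*)=\lambda$. First I would rewrite the right-hand side of the desired isomorphism using freeness and associativity of the free product:
\[
	(T_\lambda,\vphi_\lambda)*(L(\bbF_t),\tau) \cong (L(\bbZ),\tau)*(\cB(\ell^2(\N_0)),\psi_\lambda)*(L(\bbF_t),\tau).
\]
Since $(L(\bbZ),\tau)*(L(\bbF_t),\tau) \cong (L(\bbF_{t+1}),\tau)$ by Voiculescu's free group factor calculation (and this is trace-preserving, hence state-preserving here), it suffices to show
\[
	(L(\bbF_{s}),\tau)*(\cB(\ell^2(\N_0)),\psi_\lambda) \cong (T_\lambda,\vphi_\lambda)
\]
for every $s \geq 1$. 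For $s=1$ this is precisely the two-variable picture. The content is therefore to absorb an extra free copy of $L(\bbF_1)=L(\bbZ)$, i.e. to show $(L(\bbF_{s}),\tau)*(\cB(\ell^2(\N_0)),\psi_\lambda) \cong (L(\bbF_{s+1}),\tau)*(\cB(\ell^2(\N_0)),\psi_\lambda)$.

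To carry this out I would appeal to Lemma~\ref{lem:Dykema} with a minimal projection $p \in \cB(\ell^2(\N_0))^{\psi_\lambda}$, say the rank-one projection $e_{0,0}$. Taking $A = \bbC$ (or rather applying the $\cB(\cH)=\bbC$ case as indicated in the text) and $B = 0$, the lemma computes the compression $(p M p, \vphi^p)$ where $M = (\cB(\ell^2(\N_0)),\psi_\lambda) * (L(\bbF_s),\tau)$, identifying it as a free product of $(p N p, \vphi^p)$ with an appropriate algebra, and crucially records $z(p:M) = z(p:N)$ so that $p$ has full central support. Since $M$ is a factor (free products of this type are factors), $pMp$ and $pNp$ carry the full information, and Dykema's compression/amplification formulas for free products of the form $(\text{finite dimensional}) * (\text{free group factor})$ let me compute $pMp$ explicitly as an interpolated free group factor amplified suitably; the point spectrum data is carried by the $\psi_\lambda$-weights $\lambda^i(1-\lambda)$. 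Running the same computation with $L(\bbF_s)$ replaced by $L(\bbF_{s+1})$ changes only the free dimension parameter in a controlled, additive way, and then Dykema's interpolation $L(\bbF_r)*L(\bbF_{r'}) \cong L(\bbF_{r+r'})$ absorbs the discrepancy --- this is exactly the mechanism behind free absorption. One must check at each step that the isomorphisms are implemented state-preservingly, using that all the relevant states are the vacuum/canonical ones and that Dykema's isomorphisms are compatible with the $\psi_\lambda$ and $\tau$ weightings.

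The main obstacle I expect is bookkeeping the states rather than the algebras: Dykema's free-product machinery is stated for tracial von Neumann algebras, whereas $\psi_\lambda$ is non-tracial, so one genuinely needs the state-graded version (Lemma~\ref{lem:Dykema}, after \cite{MR1201693,Dyk97,Hou07}) and must track how the diagonal weights $\lambda^i(1-\lambda)$ propagate through compressions and amplifications so that the resulting object is recognizably $(T_\lambda,\vphi_\lambda)$ with its free quasi-free state and the correct point spectrum $\{\lambda^n : n \in \Z\}$, not merely some abstractly isomorphic factor. A cleaner route, which is presumably what \cite{Shl97} does, is to bypass Dykema entirely and use the matricial model for $\Gamma(\cH_\bbR,U_t)''$: represent $T_\lambda$ on an amplified Fock space, observe that adjoining free semicircular (or circular) elements corresponds to enlarging the underlying real Hilbert space $\cH_\bbR$ by a summand on which $U_t$ acts trivially, and note that --- because $T_\lambda$ already contains a free copy of $L(\bbZ)$ absorbing any trivial summand --- this enlargement does not change the isomorphism class. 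I would present the argument via Lemma~\ref{lem:Dykema} since that is the tool developed in this paper, but flag the matricial-model viewpoint as the conceptual reason the absorption holds.
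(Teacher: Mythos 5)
The paper does not actually prove this lemma: it is quoted as \cite[Corollary 5.5]{Shl97}, with only the remark that the original proof uses the matricial model for $\Gamma(\cH_\bbR,U_t)''$ (a modified version of which is reproduced in Subsection~\ref{subsec:matrix_model}). So the route you flag at the end---representing $T_\lambda$ via the matricial model and observing that adjoining a free semicircular family corresponds to enlarging $\cH_\bbR$ by a summand on which $U_t$ acts trivially, which does not change the von Neumann algebra---is precisely the proof of the cited result, and is the ``paper's approach'' insofar as it has one.

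Your primary route, however, has a genuine gap. With $A=\bbC$ and $B=0$, Lemma~\ref{lem:Dykema} degenerates to the tautology $(pMp,\vphi^p)\cong (pMp,\vphi^p)*\bbC$ (since then $M=N$), so it computes nothing about $e_{0,0}\bigl[(\cB(\ell^2(\N_0)),\psi_\lambda)*(L(\bbF_s),\tau)\bigr]e_{0,0}$. More fundamentally, that compression is not an interpolated free group factor: it is (up to the very isomorphism being proved) the type $\mathrm{III}_\lambda$ factor $T_\lambda$ itself, so Dykema's free-dimension and amplification formulas---which live entirely in the tracial world---cannot be used to identify it, and the step ``$L(\bbF_r)*L(\bbF_{r'})\cong L(\bbF_{r+r'})$ absorbs the discrepancy'' has no non-tracial analogue to appeal to without already knowing free absorption. (A smaller issue: the induction $s\mapsto s+1$ does not close up for non-integer $t$.) This is why both \cite{Shl97} and the present paper's own compression result, Lemma~\ref{lem:antman1}, are proved with the matricial model rather than with Dykema's calculus; Lemma~\ref{lem:Dykema} is only ever used here to analyze compressions once free absorption is already available as a black box. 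Your argument should therefore be run entirely through the matricial model, as you yourself suggest in the last paragraph.
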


Since $(T_\lambda,\vphi_\lambda) * (L(\bbF_\infty),\tau) \cong (T_\lambda,\vphi_\lambda)$, we have the following slightly stronger statement, which we will invoke later in the paper.

\begin{prop}

If $(A, \phi)$ is a countable direct sum of finite dimensional von Neumann algebras, hyperfinite von Neumann algebras, and interpolated free group factors with $\phi$ a trace, then
\[
	(T_\lambda,\vphi_\lambda) * (A, \phi) \cong (T_\lambda,\vphi_\lambda).	
\]

\end{prop}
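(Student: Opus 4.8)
The plan is to reduce the statement to the free absorption lemma (the case $(A,\phi)=(L(\bbF_t),\tau)$ with $t\geq 1$, or equivalently $t=\infty$) by decomposing $(A,\phi)$ and absorbing the pieces one at a time. First I would recall that a countable direct sum of finite dimensional von Neumann algebras, hyperfinite von Neumann algebras, and interpolated free group factors, equipped with a faithful normal trace $\phi$, falls within Dykema's classification of free products: each such summand is (after Dykema's work) freely decomposable in terms of copies of $\bbC$, copies of $M_2(\bbC)$, $L(\bbZ)$-type pieces, and free group factors with tracial weightings. The key structural input is that for a trace $\phi$, the pair $(A,\phi)$ is precisely the kind of tracial algebra that appears on the right-hand side of Lemma~\ref{lem:Dykema} and Lemma~\ref{lem:mini_absorption}; in particular $(A,\phi)$ can be written, up to state-preserving isomorphism, as a free product of algebras of the form $\overset{p}{\underset{t}{\bbC}}$, $\overset{r}{\underset{s,t}{M_2(\bbC)}}$, $\overset{r}{\underset{s,t}{M_2(L(\Z))}}$, and $(L(\bbF_s),\tau)$, together with a ``remainder'' copy of $\bbC$ to make the masses match.

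Next I would proceed by absorbing these free factors into $(T_\lambda,\vphi_\lambda)$ successively. The free group factor pieces $(L(\bbF_s),\tau)$ are absorbed directly by the free absorption lemma: $(T_\lambda,\vphi_\lambda) * (L(\bbF_s),\tau)\cong (T_\lambda,\vphi_\lambda)$, and since $(T_\lambda,\vphi_\lambda)*(L(\bbF_\infty),\tau)\cong(T_\lambda,\vphi_\lambda)$ we may absorb countably many of them. For the finite dimensional tracial pieces, I would use the picture $(T_\lambda,\vphi_\lambda)\cong (L(\bbZ),\tau)*(\cB(\ell^2(\N_0)),\psi_\lambda)$ together with Lemma~\ref{lem:mini_absorption}, which already tells us that adjoining $(L(\bbZ),\tau)$ and a two-dimensional tracial algebra $(\underset{t}{\bbC}\oplus\underset{1-t}{\bbC})$ (tensored against $\cB(\cH)$ with state $\psi_\lambda$) reproduces $(T_\lambda,\vphi_\lambda)$; applying Lemma~\ref{lem:Dykema} to peel off the finite dimensional tracial summands and reabsorbing the resulting free group factor pieces yields the finite dimensional case. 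Equivalently one invokes Dykema's free product formula to see that $(T_\lambda,\vphi_\lambda)*(B,\psi)$ for $(B,\psi)$ finite dimensional tracial is again $(T_\lambda,\vphi_\lambda)$, since the $T_\lambda$ factor dominates the ``free dimension'' computation and no new atoms in the centralizer survive. The hyperfinite pieces are handled by approximating/building them from finite dimensional tracial pieces and free group factor pieces (e.g. the hyperfinite $\mathrm{II}_1$ factor $R$ satisfies $R*(L(\bbZ),\tau)\cong L(\bbF_\infty)$, or more simply by noting $R$ embeds into a free group factor context in a state-preserving way), so that each hyperfinite summand's free product with $T_\lambda$ collapses to $T_\lambda$ as well.

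Finally, since there are only countably many summands, I would iterate: absorb the first summand to recover $(T_\lambda,\vphi_\lambda)$, then the second, and so on, using associativity of the reduced free product and the fact that at each stage we are left with $(T_\lambda,\vphi_\lambda)$ free with the remaining (still countable) direct sum; a standard inductive-limit/continuity argument for reduced free products then closes the gap between ``absorb each finite piece'' and ``absorb the whole countable direct sum at once.'' The main obstacle I anticipate is bookkeeping the \emph{states}: every isomorphism in Dykema's machinery must be checked to be $\vphi$-preserving, and the masses of the various $\bbC$ and $M_2(\bbC)$ summands appearing in the free decomposition of $(A,\phi)$ must be chosen so that each factor in the free product has total mass $1$ (or the same total mass); keeping track of these weightings, and ensuring the hyperfinite and direct-sum cases genuinely reduce to the finite dimensional tracial and free group factor cases without introducing a type $\mathrm{III}$ obstruction or a stray atom in the centralizer, is the delicate part. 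The diffuseness and non-unitarity of $v_\lambda$ in $y_\lambda=v_\lambda|y_\lambda|$ is what guarantees the $T_\lambda$ structure is robust under these operations, and I would lean on that throughout.
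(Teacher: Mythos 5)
There is a genuine gap, and it lies in the two structural claims your reduction rests on. First, you assert that each summand of $(A,\phi)$ is ``freely decomposable in terms of copies of $\bbC$, copies of $M_2(\bbC)$, $L(\bbZ)$-type pieces, and free group factors.'' This reverses the direction of Dykema's theorem: his results say that free products of such elementary tracial pieces \emph{are} interpolated free group factors (possibly plus atoms); they do not say that an arbitrary finite dimensional or hyperfinite tracial algebra \emph{is} such a free product. Indeed $M_3(\bbC)$ and the hyperfinite $\mathrm{II}_1$ factor $R$ admit no such free decomposition (any nontrivial tracial free product with a diffuse piece is a non-hyperfinite factor). Second, your final iteration ``absorb the first summand, then the second, \dots, using associativity of the reduced free product'' is invalid because $*$ does not distribute over $\oplus$: $(T_\lambda,\vphi_\lambda)*(B_1\oplus B_2)$ is not $(T_\lambda,\vphi_\lambda)*B_1*B_2$, so absorbing direct summands one at a time requires the compression machinery of Lemma~\ref{lem:Dykema} together with central support bookkeeping, which you only gesture at. The ``inductive-limit/continuity argument'' for countably many summands is likewise not supplied.

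The argument the paper intends (it states the proposition without proof, but the preceding sentence signals it) is much shorter and avoids both problems: by free absorption, $(T_\lambda,\vphi_\lambda)*(A,\phi)\cong (T_\lambda,\vphi_\lambda)*(L(\bbF_\infty),\tau)*(A,\phi)$; by Dykema's tracial results, $(L(\bbF_\infty),\tau)*(A,\phi)\cong (L(\bbF_\infty),\tau)$ trace-preservingly for exactly the class of $(A,\phi)$ in the hypothesis (the free product with the diffuse factor $L(\bbF_\infty)$ kills all atoms and yields an interpolated free group factor of infinite free dimension); and then one absorbs $L(\bbF_\infty)$ back into $T_\lambda$. You cite both ingredients --- the identity $(T_\lambda,\vphi_\lambda)*(L(\bbF_\infty),\tau)\cong(T_\lambda,\vphi_\lambda)$ and Dykema's classification --- but you never combine them in this way; instead you commit to the piecewise absorption strategy whose two pillars fail as stated. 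Rewriting the proof around the single application of Dykema's theorem to $(L(\bbF_\infty),\tau)*(A,\phi)$ would close the gap.
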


Finally, we will make use of the following proposition that will allow us to convert some amalgamated free products to free products over the scalars.

\begin{prop}[{\cite[Proposition 4.1]{Hou07}}]\label{prop:amalgamate}

Let $(M, \phi)$ be a von Neumann algebra with a faithful normal state, and $B \subset M$ a von Neumann subalgebra with a $\phi$-preserving conditional expectation $E_{1}: M \rightarrow B$.  Let $(A, \psi)$ be another von Neumann algebra with faithful normal state, and $E_{2}: (A, \psi) * (B, \phi) \rightarrow (B, \phi)$ the canonical $\phi$-preserving conditional expectation.  Set 
$$
(\cM, E) = (M, E_{1}) \Asterisk_B ((A, \psi) * (B, \phi), E_{2}).
$$
Then if $\vphi = \phi \circ E$,
$$
(\cM, \vphi) \cong (M, \phi) * (A, \psi).
$$
\end{prop}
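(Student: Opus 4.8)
The plan is to build the ordinary (scalar) free product $\cM' := (M,\phi)*(A,\psi)$, to locate inside it copies of $M$ and of $N := (A,\psi)*(B,\phi|_B)$ that share $B$ and are free over $B$, and then to conclude by the uniqueness of the amalgamated free product. Write $\phi_B := \phi|_B$ and let $\vphi'$ be the free product state on $\cM'$. Since $E_1$ is $\phi$-preserving we have $\phi = \phi_B\circ E_1$, and since on $\cM$ we have $\vphi = \phi_B\circ E$, at the end it will suffice to produce a $*$-isomorphism $\cM\cong\cM'$ intertwining $E$ and the conditional expectation onto $B$ inside $\cM'$.

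First I would set up that conditional expectation. Let $E_M\colon \cM'\to M$ be the canonical $\vphi'$-preserving conditional expectation and put $E' := E_1\circ E_M\colon \cM'\to B$. This is faithful, normal and $\vphi'$-preserving: using $\vphi'\circ E_M = \vphi'$ together with $\vphi'|_M=\phi=\phi_B\circ E_1$ one gets $\phi_B\circ E' = \phi\circ E_M = \vphi'$, and $E'|_M = E_1$. Because $A$ and $M$ are $\vphi'$-free, $A$ and $B\subseteq M$ are $\vphi'$-free, so $N := W^*(A,B)\subseteq\cM'$ equipped with $\vphi'|_N$ is a copy of $(A,\psi)*(B,\phi_B)$; and since $B\subseteq M$ we have $W^*(M,N) = W^*(M,A) = \cM'$. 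A short reduced-word check shows that $E_M$ annihilates every reduced word of $A*B$ containing a letter from $\ker\psi$, whence $E_M|_N$ is exactly the canonical conditional expectation of $N$ onto $B$; therefore $E'|_N = E_1\circ E_2 = E_2$.

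The heart of the argument is to verify that $M$ and $N$ are \emph{free over $B$} inside $(\cM',E')$, i.e.\ that $E'(x_1\cdots x_n)=0$ for every alternating word with letters in $\ker E_1\subseteq M$ and $\ker E_2\subseteq N$. Each $\ker E_2$-letter is an $L^2$-limit of linear combinations of reduced words of $A*B$, each containing at least one letter from $\ker\psi$. The crucial point is that in such a reduced word every $B$-letter automatically lies in $\ker\phi_B\subseteq\ker\vphi'$ (it need \emph{not} lie in $\ker E_1$, but that is irrelevant, since freeness over $\bbC$ in $\cM'$ is measured by $\vphi'$ and not by $E_1$). Substituting these expansions into $x_1\cdots x_n$ and merging all adjacent $M$-letters — each merge having the form $\mathring b\,m$, $m\,\mathring b$ or $\mathring b\,m\,\mathring b'$ with $m\in\ker E_1$, and so landing again in $\ker E_1\subseteq\ker\vphi'$ because $E_1$ is a $B$-bimodule map killing $m$ — turns $x_1\cdots x_n$ into a sum of reduced words of $\cM'=M*A$, each still containing a $\ker\psi$-letter. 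Such words are annihilated by $E_M$, hence by $E' = E_1\circ E_M$, so $E'(x_1\cdots x_n)=0$ after passing to the limit.

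With freeness over $B$ in hand, $\cM' = W^*(M,N)$ is generated by $M$ and $N$, which are free over $B$ with respect to the faithful normal conditional expectation $E'$ restricting to $E_1$ on $M$ and $E_2$ on $N$; by uniqueness of the amalgamated free product (with respect to faithful normal conditional expectations), the identity maps on $M$ and on $N$ extend to a normal $*$-isomorphism $\cM=(M,E_1)\Asterisk_B(N,E_2)\cong\cM'$ intertwining $E$ with $E'$, hence $\vphi=\phi_B\circ E$ with $\vphi'=\phi_B\circ E'$. This yields $(\cM,\vphi)\cong(M,\phi)*(A,\psi)$. I expect the reduced-word bookkeeping in the third paragraph to be the only genuine obstacle — specifically, being confident that once the $B$-letters coming from $N$ are absorbed into their neighbours the resulting word really is reduced in $M*A$; the observation that rescues it is that reduced words of $A*B$ only ever involve $\ker\phi_B$-elements of $B$, which are $\vphi'$-centered. (Equivalently, the proposition is the identity $M\Asterisk_B(B*A)\cong M*A$ for $B\subseteq M$.)
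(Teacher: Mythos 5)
Your proof is correct, and since the paper does not prove this proposition but simply cites \cite[Proposition 4.1]{Hou07}, there is no internal argument to compare it against; your route---realizing $M$ and $W^*(A,B)\cong (A,\psi)*(B,\phi)$ inside $(M,\phi)*(A,\psi)$, checking they are free with amalgamation over $B$ with respect to $E_1\circ E_M$ (with $E_M$ the canonical expectation onto $M$), and invoking uniqueness of the amalgamated free product---is exactly the standard proof of the cited result, and your identification of the key point (interior $B$-letters of reduced $A*B$-words are $\phi$-centered, merged letters lie in $\ker E_1\subseteq\ker\vphi'$) is the right one. The only step to polish is the approximation: approximate each $\ker E_2$-letter by bounded nets from the span of reduced words converging in the $\sigma$-strong${}^{*}$ topology (Kaplansky density plus normality of $E_2$ and $E'$) rather than in $L^2$, since multiplying an $L^2$-small element in the middle of a word is not controlled in the non-tracial setting; with that adjustment your reduced-word bookkeeping goes through verbatim.
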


\subsection{Graph Fock-space and the Toeplitz algebra}\label{subsec:graph_Fock_space}

Suppose $\Gamma= (V, E)$\label{notation:graph} is a finite, directed, connected graph with vertex set $V$, and edge set $E$.  For $e \in E$, we let $s(e)$\label{notation:edge} and $t(e)$ denote the source and target of $e$, respectively.  We assume that $E$ satisfies the following property:
\begin{itemize}

\item For each $e \in E$, there exists a unique $e^{\op} \in E$ satisfying $s(e) = t(e^{\op})$ and $t(e) = s(e^{\op})$.  We require that $(e^{\op})^{\op} = e$ for all $e \in E$.

\end{itemize}
Note that if $e$ is a self loop based at some $v \in V$, we could assign $e = e^{\op}$, but that is not required. 

We denote by $A = \ell^{\infty}(V)$ the space of complex valued functions on $V$, and by $p_{v}$ the indicator function on $v \in V$.  We recall the \emph{Toeplitz algebra} of $\Gamma$ as follows: Let $\C[E]$ be the the complex vector space with basis $E$.  $\C[E]$ comes equipped with a $A-A$ bimodule structure determined by
$$
	p_{v}\cdot e\cdot p_w = \delta_{v, s(e)}\delta_{w,t(e)} e
$$
and $A$-valued inner product given by
$$
\langle e | e' \rangle_{A} = \delta_{e, e'} p_{t(e)}
$$
which is extended to be linear in the right variable. The \emph{Fock space} of $\Gamma$, denoted $\cF(\Gamma)$, is the right C*-Hilbert module
$$
\cF(\Gamma) = A \oplus \bigoplus_{n \geq 1} \C[E]^{\otimes^{n}_{A}}.
$$
$\cF(\Gamma)$ has a canonical left action by $\ell^{\infty}(V)$ given by bounded, adjointable operators:
	\[
		p_{v}\cdot e_{1}\otimes \cdots \otimes e_{n} = \delta_{v, s(\e_{1})} e_{1} \otimes \cdots \otimes e_{n}.
	\]  
For each $e \in E$, we define the creation operator $\ell(e)$ by
	\begin{align*}
		\ell(e)&  p_{v} = \delta_{v, t(e)} e \\
		\ell(e)&  e_{1} \otimes\cdots\otimes e_{n} = e \otimes e_{1} \otimes \cdots \otimes e_{n}.
	\end{align*}
Then $\ell(e)$ is bounded and adjointable with adjoint given by
	\begin{align*}
		\ell(e)^{*}&  p_{v} = 0\\
		\ell(e)^{*}&  e_{1} \otimes \cdots \otimes e_{n} = \langle e|e_{1}\rangle_{A} e_{2} \otimes \cdots \otimes e_{n}.
	\end{align*}
 The Toeplitz algebra associated to $\Gamma$, denote $\cT(\Gamma)$, is the C*-algebra generated by the collection $\{\ell(e)\, \colon e \in E\}$.


\section{Graph Algebras with Edge Weightings}\label{sec:adding_weighting_to_edges}

Let $\Gamma=(V, E)$ be as in Subsection~\ref{subsec:graph_Fock_space}. In contrast with the tracial setting in which one typically equips $\Gamma$ with a vertex weightings (see \cite{ MR3110503, MR3266249, MR3679687, MR3718052}), we will equip $\Gamma$ with an \textbf{edge weighting}: a function $\mu: E \rightarrow \R^{+}$\label{notation:mu} satisfying $\mu(e^{\op}) = \mu(e)^{-1}$. Note that this is more general since any vertex weighting $\mu_0\colon V\to \bbR^+$ defines an edge weighting $\mu$ by $\mu(e):=\frac{\mu_0(t(e))}{\mu_0(s(e))}$.

To each $e \in E$, define $Y_{e}\in\cT(\Gamma)$\label{notation:Y_e} to be the element $Y_{e} = \ell(e) + \sqrt{\mu(e)}\ell(e^{\op})^{*}$.  Note that
\begin{itemize}

\item $p_{s(e)}Y_{e}p_{t(e)} = Y_{e}$

\item $Y_{e}^{*} = \sqrt{\mu(e)}Y_{e^{\op}} $

\end{itemize}
We will also denote the polar decomposition by $Y_e=u_e |Y_e|$.

As is \cite{MR3266249, MR3679687}, we set $\cS(\Gamma, \mu)$\label{notation:S} to be the C*-algebra generated by $A$ together with $\{ Y_{e}: e \in \vec{E}\}$.  The arguments used in \cite[Theorem 5.19]{MR3266249} can be used to show the following:
\begin{prop}
Define $\bbE: \cS(\Gamma, \mu) \rightarrow A$ by $\bbE(x) = \langle 1_{A} | x1_{A}\rangle_{A}$.  Then: 
\begin{enumerate}[(a)]
\item $\bbE$ is a faithful conditional expectation of $ \cS(\Gamma, \mu)$ onto $A$.

\item The C*-algebras C*$(A, Y_{f},Y_{f^{op}})$ as $f$ ranges through all pairs $(e, e^{\op})$ are free with amalgamation over $A$ under $\bbE$.
\end{enumerate}
\end{prop}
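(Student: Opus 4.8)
The plan is to follow the strategy behind \cite[Theorem 5.19]{MR3266249}: realize $\cF(\Gamma)$ as an amalgamated free product of the Fock modules of the single-edge-pair subgraphs, check that the operators $Y_e$ are transported to the corresponding free-product (Voiculescu) representations, and then read off both statements from the structure theory of reduced amalgamated free products. Note first that since $Y_e^* = \sqrt{\mu(e)}\,Y_{e^{\op}}$, we have $C^*(A, Y_e, Y_{e^{\op}}) = C^*(A, Y_e)$, so only one operator per pair is really in play.

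For each pair $f=(e,e^{\op})$, let $\Gamma_f$ be the subgraph of $\Gamma$ with vertex set $V$ and edge set $\{e,e^{\op}\}$, and let $(\cF(\Gamma_f),1_A)$ be its Fock module with distinguished vector $1_A$. Decomposing $\C[E] = \bigoplus_f \C[\{e,e^{\op}\}]$ as an $A$--$A$-bimodule and, in each nonzero basis tensor $e_1 \otimes_A \cdots \otimes_A e_n$ of $\C[E]^{\otimes_A n}$ (which corresponds to a path in $\Gamma$), grouping the maximal runs of consecutive edges belonging to a single pair, one obtains an isometric $A$--$A$-bimodule identification
\[
	\cF(\Gamma) \;\cong\; \Asterisk_A \,(\cF(\Gamma_f),1_A),
\]
the amalgamated free product of pointed right Hilbert $A$-modules, under which $1_A$ corresponds to the amalgamated vacuum. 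I would then verify that for $e$ in the pair $f$ the creation operator $\ell(e)$ is precisely the image of the single-pair creation operator $\ell_f(e) \in \mathcal{B}_A(\cF(\Gamma_f))$ under the associated free-product representation $\lambda_f$, and that the left $A$-action matches up as well: concretely, $\lambda_f(\ell_f(e))$ either extends the leftmost $\cF(\Gamma_f)$-block or creates a new one, which is exactly how $\ell(e)$ prepends $e$ to a path in $\cF(\Gamma)$. Since $\lambda_f$ is a $*$-homomorphism, it follows that
\[
	Y_e = \ell(e) + \sqrt{\mu(e)}\,\ell(e^{\op})^* = \lambda_f\!\big(\ell_f(e) + \sqrt{\mu(e)}\,\ell_f(e^{\op})^*\big),
\]
so $C^*(A, Y_e, Y_{e^{\op}})$ sits inside the adjointable operators on $\cF(\Gamma)$ as the $f$-th algebra of a reduced amalgamated free product whose canonical conditional expectation is exactly $x \mapsto \<1_A | x1_A\>_A = \bbE(x)$.

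With this in hand, (b) is immediate from the definition of freeness with amalgamation in a reduced amalgamated free product. For (a): $\bbE$ is unital (as $\<1_A|1_A\>_A = \sum_v p_v = 1_A$), completely positive, restricts to the identity on $A$, and is $A$-bilinear --- here one uses that adjointable operators are automatically right $A$-linear and that $1_A$ spans the degree-zero part of $\cF(\Gamma)$ --- hence is a conditional expectation. Faithfulness reduces, via the free-product picture together with the standard fact that a reduced amalgamated free product of conditional expectations is faithful whenever each component expectation is, to faithfulness of the vacuum expectation on each single-pair algebra $C^*(A,Y_e)$; this in turn follows from a direct computation showing $1_A$ is separating for that algebra acting on $\cF(\Gamma_f)$ (put a word in $A$, $\ell_f(e)$, $\ell_f(e^{\op})$ and their adjoints into reduced form and test it against vectors of increasing degree), exactly as in \cite{MR3266249}.

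I expect the main obstacle to be the first step: carefully setting up the amalgamated free product of \emph{$C^*$-Hilbert modules} (rather than the more familiar Hilbert space / von Neumann setting), identifying it with $\cF(\Gamma)$ on spanning vectors, and confirming that the creation operators and the left $A$-action are transported correctly through $\lambda_f$. Once that identification is pinned down, both (a) and (b) follow from standard free probability.
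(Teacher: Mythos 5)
The paper gives no proof of its own here, deferring instead to the argument of \cite[Theorem 5.19]{MR3266249}, and that argument is exactly the one you outline: split $\C[E]$ as a direct sum of $A$--$A$-bimodules indexed by the pairs $(e,e^{\op})$, identify $\cF(\Gamma)$ with the amalgamated free product over $A$ of the single-pair Fock modules, check that the creation operators and left $A$-action are transported correctly, and read off freeness and the conditional expectation from the vacuum. Your proposal is correct and takes essentially the same route; the one step you invoke as a ``standard fact'' --- faithfulness of the free-product expectation given faithfulness of the components --- is unproblematic in this setting since $A=\ell^{\infty}(V)$ is finite-dimensional and each component expectation is implemented by the vacuum vector.
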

Let $\phi$ be any faithful state on $\cS(\Gamma,\mu)$ that preserves $\bbE$, and let $\cM(\Gamma,\mu,\phi)$ denote the von Neumann algebra generated by $\cS(\Gamma,\mu)$ via the GNS representation associated to $\phi$. From the formula for $Y_{e}$, it follows that in $p_{s(e)}\cM(\Gamma, \mu, \phi)p_{s(e)}$, $Y_{e}Y_{e}^{*}$ has the same distribution as $y_{e}y_{e}^{*}$ where $y_{e}$ is a generalized circular element.  Explicitly, this means
	\[
		p_{s(e)}W^*(Y_{e}Y_{e}^{*})p_{s(e)} \cong \begin{cases}
													\underset{\color{white}\phi(p_{s(e)})\mu(e)}{L(\Z)} &\text{ if } \mu(e) \geq 1\\
													\underset{\phi(p_{s(e)})\mu(e)}{L(\Z)} \oplus \underset{\phi(p_{s(e)})(1 - \mu(e))}{\C} &\text{ if } \mu(e) < 1
													\end{cases}
	\]
In particular, it follows that $u_{e}u_{e}^{*} = p_{s(e)}$ if and only if $\mu(e) \geq 1$, and $u_{e}^{*}u_{e} = p_{t(e)}$ if and only if $\mu(e) \leq 1$.

There is another approach to understanding the above proposition in terms of Shlyakhtenko's operator valued semicircular systems \cite{Shl99}. Place an equivalence relation on $E$ by identifying $e$ with $e^{\op}$ for each $e \in E$ except for self-loops satisfying $e\neq e^{op}$, in which case we define $e$ and $e^{\op}$ to be inequivalent. Denote $\overline{E}$ to be the space of equivalence classes, and let $[e]$ denote the equivalence class of $e \in E$. 

For each pair $[e], [f] \in \overline{E}$, we define maps $\eta_{[e], [f]}: A \rightarrow A$ as follows. If $e\in E$ satisfies $[e]=[e^{op}]$, then define $\eta_{[e],[f]}$ to be the identically zero if $[f]\neq [e]$, and otherwise let it be the linear extension of
	\[
		\eta_{[e], [e]}(p_{v}) =  \begin{cases} 0 &\text{ if } v\neq s(e) \text{ and } v \neq t(e)\\
								\sqrt{\mu(e)}p_{s(e)} &\text{ if } v = t(e)\\
								\sqrt{\mu(e^{\op})}p_{t(e)} &\text{ if } v = s(e)
						\end{cases}.
	\]
If $e\in E$ satisfies $[e]\neq [e^{\op}]$, then define $\eta_{[e],[f]}$ to be identically zero for all $f\not\in \{e,e^{\op}\}$, and define $\eta_{[e],[e]}$ and $\eta_{[e],[e^{\op}]}$ to be the respective linear extensions of
	\begin{align*}
		\eta_{[e],[e]}(p_v) &= \delta_{s(e),v} p_{s(e)}\\
		\eta_{[e],[e^{\op}]}(p_v) &= -i\frac{\mu(e) -1}{\mu(e)+1} \delta_{s(e),v} \cdot  p_{s(e)}.
	\end{align*}
It is easy to see that the map $\eta: M_{\overline{E} \times \overline{E}}(A) \rightarrow M_{\overline{E} \times \overline{E}}(A)$ given by $(\eta)_{[e], [f]} = \eta_{[e], [f]}$ is completely positive. Therefore, one can form the C*-algebra $\Phi(A, \eta)$ as in \cite{Shl99}, which is generated by $A$ and self adjoint elements $X_{[e]}$, $[e]\in \overline{E}$.  There is a faithful conditional expectation $\bbF: \Phi(A, \eta) \rightarrow  A$ uniquely determined by $\bbF(X_{[e]}aX_{[f]}) = \eta_{[e], [f]}(a)$ for all $a \in A$ and 
	\begin{align*}
		\bbF&(a_0X_{[e_{i_1}]}a_{1}X_{[e_{i_2}]}\cdots a_{n-1}X_{[e_{i_n}]}a_{n})\\
			&= \sum_{k=2}^{n}a_{0}\eta_{[e_{i_1}], [e_{i_k}]}(a_{1}\bbF(X_{[e_{i_2}]}\cdots a_{k-2}X_{[e_{i_{k-1}}]})a_{k-1})\bbF(a_{k}X_{[e_{k+1}]}\cdots a_{n-1}X_{[e_{i_n}]}a_{n})
	\end{align*}
for $a_{0}, \cdots, a_{n} \in A$.

From the Fock-space picture in \cite{Shl99}, one sees that the mapping $\C\langle A, X_{[e]} \rangle \rightarrow \C\langle A, Y_{e} \rangle$ which is the identity on $A$ and is determined by
	\[	
		X_{[e]} \mapsto\begin{cases} \frac{1}{\sqrt[4]{\mu(e)}}(Y_{e} + Y_{e}^{*}) &\text{ if } e \text{ is not a loop }\\ 
							Y_{e} &\text{ if } e = e^{\op}\\
\frac{1}{\sqrt{1+\mu(e)}}Y_{e} + \sqrt{\frac{\mu(e)}{1 + \mu(e)}} Y_{e^{\op}} &\text{ if } e \text{ is a loop, } e \neq e^{\op}, \, \mu(e) \leq 1\\
							i\left(\frac{1}{\sqrt{1+\mu(e)}}Y_{e} - \sqrt{\frac{\mu(e)}{1 + \mu(e)}} Y_{e^{\op}}\right) &\text{ if } e \text{ is a loop, } e \neq e^{\op}, \, \mu(e) \geq 1\\
						\end{cases}
	\]
extends to a $*$-algebra isomorphism which intertwines $\bbF$ and $\bbE$. (Note that if $\mu(e)=1$ for $e$ a self-loop satisfying $e\neq e^{\op}$, a non-canonical choice must be made for how one maps $(X_{[e]},X_{[e^{\op}]})$.)  From \cite{Shl99}, the von Neumann algebra $(\Phi(A, \eta), \phi \circ \bbF)''$ is independent of the state $\phi$ on $A$.  Therefore, $\cM(\Gamma, \mu,\phi)$ is independent of the faithful positive linear functional, $\phi$ on $A$.  

\begin{defn}\label{notation:M}
Let $\phi$ be any faithful positive linear functional on $A$.  We define $\cM(\Gamma, \mu)$ to be the von Neumann algebra generated by $\cS(\Gamma, \mu)$ in the GNS representation under $\phi \circ \bbE$.
\end{defn} 

It will be convenient to study $\cM(\Gamma, \mu)$ under special positive linear functionals, $\vphi$.  To construct such a functional, it is helpful to set up the following notation:
\begin{nota}
\begin{itemize}
\item[]

\item\label{notation:paths} Denote $\Pi_{\Gamma}$ as the set of paths in $\Gamma$.  If $\Gamma$ is clear from context, we will simply write this set as $\Pi$.

\item\label{notation:loops} Denote $\Lambda_{\Gamma}$ as the set of loops in $\Gamma$.  If $\Gamma$ is clear from context, we will simply write this set as $\Lambda$.

\item For $\sigma = e_{1}\cdots e_{n} \in \Pi$, set $\mu(\sigma) = \mu(e_{1})\cdots \mu(e_{n})$, $s(\sigma) = s(e_{1})$, and $t(\sigma) = t(e_{n})$.

\item\label{notation:H} Denote $H(\Gamma,\mu)$ as the subgroup of $\bbR^+$ generated by $\{\mu(\sigma)\colon \sigma\in \Lambda_\Gamma\}$.
\end{itemize}
\end{nota}

We now consider the collection of subgraphs $(\Xi, \mu)$ of $(\Gamma, \mu)$ satisfying $H(\Xi,\mu)=\{1\}$; that is,
	\[
		\mu(\sigma) = 1 \qquad \forall\sigma \in \Lambda_{\Xi}.
	\]
We denote by $\Gamma_{\Tr}$\label{notation:tracial_subgraph} a maximal subgraph (under inclusion) of the collection of $(\Xi, \mu)$, which we point out need not be unique; however, note that by maximality and the symmetry $\mu(e^{\op})=\mu(e)^{-1}$, we always have $V(\Gamma_{\Tr}) = V$. We place a positive functional $\vphi$\label{notation:phi} on $A$ by fixing $* \in V$, declaring $\vphi(p_{*}) := 1$, and setting $\vphi(p_{v}) := \mu(\sigma)$ where $\sigma \in \Pi_{\Gamma_{\Tr}}$ satisfies $s(\sigma) = *$ and $t(\sigma) = v$. Note that by the above condition defining $\Gamma_{\Tr}$, this definition does not depend on the choice of $\sigma \in \Pi_{\Gamma_{\Tr}}$. Observe that for any $v, w \in V$ we have $\vphi(p_{w}) = \mu(\sigma)\vphi(p_v)$, where $\sigma \in \Pi_{\Gamma_{\Tr}}$ satisfies $s(\sigma) = v$ and $t(\sigma) = w$. In particular, this implies that---up to the choice of $\Gamma_{\Tr}$ and scaling---$\vphi$ is independent of the choice of $*\in V$.

We extend $\vphi$ to $\cM(\Gamma, \mu)$ via $\vphi \circ \bbE$, and we will also denote this extension as $\vphi$.  We have the following proposition which details the joint law of the $(Y_{e})_{e \in E}$ under $\vphi$.

\begin{prop}\label{prop:edges_elements_are_eigenops}
Let $Q \in \cM(\Gamma, \mu)$.  Then $\vphi(Y_{e}Q) = \mu(e)\mu(\sigma)\vphi(QY_{e})$ where $\sigma \in \Pi_{\Gamma_{\Tr}}$ satisfies $s(\sigma) = t(e)$ and $t(\sigma) = s(e)$.

\end{prop}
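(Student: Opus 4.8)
The plan is to reduce the statement to the known eigenoperator property of generalized circular elements together with the structure of the conditional expectation $\bbE$ and the definition of $\vphi$ on $A$. First I would observe that it suffices to prove the identity for $Q$ ranging over a $\sigma$-weakly dense $*$-subalgebra, and in fact over words of the form $Q = a_0 Z_1 a_1 Z_2 \cdots Z_m a_m$ where each $a_i \in A$ and each $Z_j$ is one of the $Y_f$ (or $Y_f^*$, but recall $Y_f^* = \sqrt{\mu(f)} Y_{f^{\op}}$, so we may take $Z_j \in \{Y_f : f \in E\}$). Because $\vphi = \vphi\circ\bbE$ and $\bbE$ implements amalgamated freeness over $A$ of the algebras $\mathrm{C}^*(A, Y_f, Y_{f^{\op}})$, the quantity $\vphi(Y_e Q)$ can be computed via the moment-cumulant / Fock-space expansion: the only contributions come from pairing the letter $Y_e$ against a letter from the same pair $(e, e^{\op})$, and the remaining letters are handled by the $A$-valued inner product structure on $\cF(\Gamma)$.

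The cleanest route is to work directly on the Fock space $\cF(\Gamma)$ rather than with cumulants. Here $\vphi(x) = \vphi(\langle 1_A \mid x\, 1_A\rangle_A)$, so $\vphi(Y_e Q)$ and $\vphi(Q Y_e)$ are both inner products of certain vectors in $\cF(\Gamma)$ against $1_A$, paired through $\vphi$ on $A$. The key computation is the commutation-type relation for $Y_e = \ell(e) + \sqrt{\mu(e)}\,\ell(e^{\op})^*$: one checks that moving $Y_e$ from the left of a word to the right replaces the creation part by an annihilation part and vice versa, and the factor $\mu(e)$ appears precisely from the coefficient $\sqrt{\mu(e)}$ squared when $\ell(e)^*$ must eventually pair with an $\ell(e)$ (since $\ell(e)^*\ell(e) = p_{t(e)}$ while $\ell(e^{\op})^*\ell(e^{\op}) = p_{s(e)}$, i.e. $\ell(e)\ell(e)^* = p_{s(e)} - (\text{rank-ish correction})$; the upshot is that the two ways of contracting $Y_e$ differ by exactly $\mu(e)$). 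This is exactly the statement, already recorded in the excerpt, that each $Y_e$ "has the same distribution as" a generalized circular element $y_e$, and by \cite[Proposition 2.1]{Nel17} (quoted in the preliminaries) such elements are eigenoperators: $\sigma_t^{\vphi_\lambda}(y_\lambda) = \lambda^{it} y_\lambda$, which by the KMS condition is equivalent to $\vphi(y_\lambda Q) = \lambda\,\vphi(Q y_\lambda)$ for $Q$ in the relevant algebra. The remaining factor $\mu(\sigma)$ with $\sigma$ a path in $\Gamma_{\Tr}$ from $t(e)$ to $s(e)$ is bookkeeping: it is exactly the ratio $\vphi(p_{s(e)})/\vphi(p_{t(e)})$ by the very definition of $\vphi$ on $A$, and it enters because $Y_e = p_{s(e)} Y_e p_{t(e)}$ shifts the "vertex grading" by the edge $e$. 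So the eigenvalue of $Y_e$ as an eigenoperator of $\vphi$ is $\lambda_e := \mu(e)\,\vphi(p_{s(e)})/\vphi(p_{t(e)}) = \mu(e)\mu(\sigma)$, which is precisely the claimed constant.

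Concretely, I would carry out the steps as: (1) reduce to $Q = a_0 Y_{f_1} a_1 \cdots Y_{f_m} a_m$ by density and linearity, absorbing the $A$-factors $a_i$ using $\vphi$'s behavior on $A$; (2) expand $\vphi(Y_e Q)$ and $\vphi(Q Y_e)$ on the Fock space, noting that $\vphi(Y_e Q) = \langle e^{\op} \otimes(\cdots), \cdots\rangle$-type terms organized by which $f_j$ pairs with $e$; (3) use $\ell(e)1_A = 0$ against the vacuum-side and the identities $\ell(e)^*\ell(e) = p_{t(e)}$, $\langle e\mid e\rangle_A = p_{t(e)}$ to collapse each pairing, producing in $\vphi(Y_eQ)$ a factor of $\mu(e)$ relative to the corresponding pairing in $\vphi(QY_e)$ (because the creation/annihilation roles of the $(e,e^{\op})$ pair are swapped); (4) track the $A$-valued "vertex" factors: each surviving term of $\vphi(QY_e)$ carries a hidden $\vphi(p_{t(e)})$ whereas the matched term of $\vphi(Y_eQ)$ carries $\vphi(p_{s(e)})$, and $\vphi(p_{s(e)}) = \mu(\sigma)\vphi(p_{t(e)})$ by construction of $\vphi$ on $\Gamma_{\Tr}$; (5) conclude $\vphi(Y_eQ) = \mu(e)\mu(\sigma)\,\vphi(QY_e)$ term by term. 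The main obstacle is step (3)–(4): making precise, uniformly across all word lengths and all placements of the paired letter, that the two contractions differ by exactly the scalar $\mu(e)$ and that the residual $A$-weight is exactly $\mu(\sigma)$ — i.e., bookkeeping the interaction between the $\sqrt{\mu(e)}$ in $Y_e$ and the noncommutative $A$-bimodule grading on $\cF(\Gamma)$. An alternative to the bare-hands Fock-space computation, which I would use if the direct approach gets unwieldy, is to invoke the $*$-isomorphism with $\Phi(A,\eta)$ together with the operator-valued semicircular machinery of \cite{Shl99}, or simply to cite that $\mathrm{C}^*(A, Y_e, Y_{e^{\op}})$ sits inside a graph algebra whose diffuse part is a two-variable free Araki--Woods algebra $T_{\mu(e)\mu(\sigma)}$ and transport the eigenoperator identity $\sigma_t^{\vphi}(y_\lambda) = \lambda^{it} y_\lambda$ through that identification via the KMS condition.
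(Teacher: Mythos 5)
Your proposal is correct and takes essentially the same approach as the paper: the paper's proof reduces to monomials $Q = Y_{e_1}\cdots Y_{e_n}$, expands $\vphi(Y_eQ)$ as a sum over the positions $j$ with $e_j = e^{\op}$ using the conditional expectation $\bbE$, and extracts precisely the two factors you identify, namely $\mu(e)$ from the swapped creation/annihilation coefficients and $\mu(\sigma) = \vphi(p_{s(e)})/\vphi(p_{t(e)})$ from the vertex weights. (One minor slip in your step (3): $\ell(e)1_A = e \neq 0$; it is the annihilation part $\ell(e)^*$ that kills $1_A$.)
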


\begin{proof}

It suffices to prove this theorem when $Q = Y_{e_{1}}\cdots Y_{e_{n}}$, where $e_1\cdots e_n\in \Pi_\Gamma$, $s(e_{1}) = t(e)$, and $t(e_{n}) = s(e)$.  Let $\sigma \in \Pi_{\Gamma_{\Tr}}$ satisfy $s(\sigma) = t(e)$ and $t(\sigma) = s(e)$. For each $j=1,\ldots,n$, let $\alpha_{j-1}$ and $\beta_{j+1}$ denote scalars such that
	\[
		\mathbb{E}(Y_{e_1}\cdots Y_{e_{j-1}}) = \alpha_{j-1} p_{s(e_j)} \qquad \mathbb{E}(Y_{e_{j+1}}\cdots Y_{e_{n}}) = \beta_{j+1} p_{t(e_j)}	
	\]  
Then using a similar argument to \cite[Lemma 2.6]{MR3679687} and the definitions of $Y_{e}$ and $Y_{e^{\op}}$, we have:
	\begin{align*}
		\vphi(Y_{e}Y_{e_{1}}\cdots Y_{e_{n}}) &= \sum_{e_{j} = e^{\op}} \sqrt{\mu(e)}\vphi(\ell(e^{\op})^{*}\bbE(Y_{e_1}\cdots Y_{e_{j-1}})\ell(e^{\op})\bbE(Y_{e_{j+1}}\cdots Y_{e_n}))\\
			&= \sum_{e_{j} = e^{\op}} \sqrt{\mu(e)}\vphi(\ell(e^{\op})^{*}\alpha_{j-1}p_{t(e)}\ell(e^{\op})\beta_{j+1}p_{s(e)})\\
			&= \sum_{e_{j} = e^{\op}} \sqrt{\mu(e)}\vphi(\alpha_{j-1}\beta_{j+1}p_{s(e)})\\
			&= \mu(e)\mu(\sigma) \sum_{e_{j} = e^{\op}} \frac{1}{\sqrt{\mu(e)}}\vphi(\alpha_{j-1}\beta_{j+1}p_{t(\e)})\\
			&=  \mu(e)\mu(\sigma) \sum_{e_{j} = e^{\op}} \sqrt{\mu(e^{\op})}\vphi(\alpha_{j-1}p_{t(e)}\ell(e)^{*}\beta_{n+1}p_{s(e)}\ell(e))\\
			&= \mu(e)\mu(\sigma) \sum_{e_{j} = e^{\op}} \sqrt{\mu(e^{\op})}\vphi(\bbE(Y_{1}\cdots Y_{j-1})\ell(e)^{*}\bbE(Y_{j+1}\cdots Y_{n})\ell(e))\\
			&= \mu(e)\mu(\sigma)\vphi(Y_{e_{1}}\cdots Y_{e_{n}}Y_{e}) 
	\end{align*}
as desired.
\end{proof}

\begin{cor}
If $e \in E(\Gamma_{\Tr})$, then $Y_{e} \in \cM(\Gamma, \mu)^{\vphi}$.  More generally, each $Y_{e}$ is an eigenoperator of $\Delta_{\vphi}$ with eigenvalue $\mu(e)\mu(\sigma)$ with $\sigma \in \Pi_{\Gamma_{\Tr}}$ satisfying $s(\sigma) = t(e)$ and $t(\sigma) = s(e)$.
\end{cor}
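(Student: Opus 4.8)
The plan is to read off the ``more generally'' clause directly from Proposition~\ref{prop:edges_elements_are_eigenops} and the standard characterization of eigenoperators, and then specialize to $e\in E(\Gamma_{\Tr})$ by a path-counting argument in $\Gamma_{\Tr}$. First I would recall the following equivalence, valid for any von Neumann algebra $M$ with a faithful normal state $\vphi$: a nonzero $a\in M$ is an eigenoperator of $\Delta_\vphi$ with eigenvalue $\lambda>0$ (that is, $\sigma^\vphi_t(a)=\lambda^{it}a$ for all $t\in\bbR$, cf.\ the discussion around \cite[Definition 2.5]{Nel17}) if and only if $\vphi(aQ)=\lambda\,\vphi(Qa)$ for every $Q\in M$. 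The ``only if'' direction is immediate from the KMS condition, $\vphi(aQ)=\vphi(Q\sigma^\vphi_{-i}(a))=\vphi(Q\lambda a)=\lambda\vphi(Qa)$; the ``if'' direction is the substantive one and will be discussed below. Granting this, apply it with $M=\cM(\Gamma,\mu)$, $a=Y_e$, and $\lambda=\mu(e)\mu(\sigma)$ for $\sigma\in\Pi_{\Gamma_{\Tr}}$ a path from $t(e)$ to $s(e)$: Proposition~\ref{prop:edges_elements_are_eigenops} says precisely $\vphi(Y_eQ)=\mu(e)\mu(\sigma)\vphi(QY_e)$ for all $Q\in\cM(\Gamma,\mu)$, so $Y_e$ is an eigenoperator of $\Delta_\vphi$ with the asserted eigenvalue.

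For the first assertion, suppose $e\in E(\Gamma_{\Tr})$. Then the concatenation $e\sigma$ is a loop in $\Gamma_{\Tr}$ based at $s(e)$: it runs along $e$ from $s(e)$ to $t(e)$ and then along $\sigma\in\Pi_{\Gamma_{\Tr}}$ from $t(e)$ back to $s(e)$. By the defining property of $\Gamma_{\Tr}$, namely $H(\Gamma_{\Tr},\mu)=\{1\}$, we get $\mu(e)\mu(\sigma)=\mu(e\sigma)=1$. Hence $Y_e$ is an eigenoperator with eigenvalue $1$, i.e.\ $\sigma^\vphi_t(Y_e)=Y_e$ for all $t$, which is exactly the statement $Y_e\in\cM(\Gamma,\mu)^\vphi$. (This is consistent with $p_{s(e)},p_{t(e)}\in A\subset\cM(\Gamma,\mu)^\vphi$ being fixed as well, so nothing about domains is at issue for this particular case.)

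The main obstacle is the ``if'' direction of the equivalence in the first paragraph: one must promote the purely algebraic identity $\vphi(Y_eQ)=\lambda\vphi(QY_e)$, holding for all $Q$, to genuine $\sigma^\vphi$-analyticity of $Y_e$ together with $\sigma^\vphi_{-i}(Y_e)=\lambda Y_e$. Concretely, the identity forces the GNS vector $\widehat{Y_e}\in L^2(\cM(\Gamma,\mu),\vphi)$ into the domain of $\Delta_\vphi$ with $\Delta_\vphi\widehat{Y_e}=\lambda\widehat{Y_e}$ (hence into the domain of $\Delta_\vphi^{z}$ for every $z\in\bbC$, with $\Delta_\vphi^{z}\widehat{Y_e}=\lambda^{z}\widehat{Y_e}$), from which the eigenoperator property follows; this step uses only faithfulness and normality of $\vphi$ and is the content of the cited lemma from \cite{Nel17}, but it is worth spelling out so that ``eigenoperator of $\Delta_\vphi$'' is not left as a merely formal statement. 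Everything else is bookkeeping with paths in $\Gamma_{\Tr}$.
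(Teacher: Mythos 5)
Your proposal is correct and is exactly the argument the paper intends: the corollary is stated without proof as an immediate consequence of Proposition~\ref{prop:edges_elements_are_eigenops}, with the identity $\vphi(Y_eQ)=\mu(e)\mu(\sigma)\vphi(QY_e)$ characterizing $Y_e$ as an eigenoperator of eigenvalue $\mu(e)\mu(\sigma)$, and the special case $e\in E(\Gamma_{\Tr})$ following because $e\sigma$ is then a loop in $\Gamma_{\Tr}$, forcing the eigenvalue to be $1$. Your extra care in spelling out the passage from the algebraic identity to genuine $\sigma^\vphi$-analyticity is a reasonable elaboration of what the paper leaves implicit.
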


\subsection{Some remarks about $\cM(\Gamma_{\Tr}, \mu)$}

In the case that $\mu(\sigma) = 1$ for every $\sigma \in \Lambda_{\Gamma}$, then $\vphi$ is a trace on $\cM(\Gamma, \mu)$.  Furthermore, if we define $X_{e} = \frac{1}{\sqrt[4]{\mu(e)}}Y_{e}$, then
$$
X_{e} = \frac{1}{\sqrt[4]{\mu(e)}}\ell(e) + \sqrt[4]{\mu(e)}\ell(e^{\op})^{*} = \sqrt[4]{\frac{\vphi(p_{s(e)})}{\vphi(p_{t(e)})}}\ell(e) + \sqrt[4]{\frac{\vphi(p_{t(e)})}{\vphi(p_{s(e)})}}\ell(e^{\op})^{*}
$$ 
It follows that $(\cM(\Gamma, \mu), \vphi)$ is the von Neumann algebra $(\cM(\Gamma, \overline{\mu}), \Tr)$ from \cite{MR3266249} where $\overline{\mu}: V \rightarrow \R^{+}$ is given by $\overline{\mu}(v) = \varphi(p_{v})$.  

This means that we can completely determine the structure of $(\cM(\Gamma_{\Tr}, \mu), \vphi)$, and this will be used in the upcoming sections.  In particular we see that $(\cM(\Gamma_{\Tr}, \mu), \vphi)$ is a factor if and only if $\Gamma_{\Tr}$ has at least two distinct pairs of edges and for every vertex, $v$, we must have
$$
\vphi(p_{v}) \leq \sum_{w \sim v} n_{v, w}\vphi(p_{w})
$$
where $w \sim v$ means that $w$ is connected to $e$ by at least one edge, and $n_{v, w}$ denotes the number of edges with source $v$ and target $w$.  In terms of our edge weighting, this is equivalent to
$$
1 \leq \sum_{s(e) = v} \mu(e).
$$


\subsection{Tracking centralizer central supports}

We will be applying Lemmas \ref{lem:antman1} and \ref{lem:antman2} below to study the structure of $\cM(\Gamma, \mu)$.  In the application of these lemmas, specifically Lemma~\ref{lem:antman2}, it will be crucial to track the central support in $\cM(\Gamma, \mu)^{\varphi}$ of specific projections. Recall that for a projection $p\in M$, we denote its central support in $M$ by $z(p\colon M)$.

\begin{lem}\label{lem:centralizersupport}
Assume that $\Gamma$ is connected with at least two pairs of edges, and for each $e \in E$, let $Y_{e} = u_{e}|Y_{e}|$ be the polar decomposition.  Then for any $e \in E$, $z(u_{e}u_{e}^{*}: \cM(\Gamma, \mu)^{\vphi})$ is $\displaystyle z = \bigvee_{f \in E} u_{f}u_{f}^{*}$.
\end{lem}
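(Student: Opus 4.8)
The plan is to show that the projections $u_e u_e^*$, $e \in E$, all have the same central support in $\cM(\Gamma,\mu)^\vphi$, and that this common central support is the supremum $z = \bigvee_{f \in E} u_f u_f^*$. Since $z$ is manifestly central (it is a supremum of projections whose central supports we will show all equal $z$, and in any case $z(u_e u_e^* : \cM^\vphi) \le z$ is automatic once we know $z$ dominates each $u_f u_f^*$), the real content is the lower bound: $z(u_e u_e^* : \cM^\vphi) \ge u_f u_f^*$ for every pair $e, f \in E$. Equivalently, it suffices to produce, for each pair $e,f$, a partial isometry in $\cM(\Gamma,\mu)^\vphi$ whose initial projection is under $u_e u_e^*$ and whose final projection has nonzero overlap with $u_f u_f^*$ — and then, by a standard argument, conclude the central supports coincide. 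I would actually aim for the cleaner statement that $u_e u_e^*$ and $u_f u_f^*$ are \emph{equivalent} (or at least subequivalent in both directions) in $\cM(\Gamma,\mu)^\vphi$, which immediately forces equality of central supports.

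First I would reduce to a local/graph-combinatorial statement. Note $u_e u_e^* = p_{s(e)}$ when $\mu(e) \ge 1$ and $u_e u_e^* \le p_{s(e)}$ in general, with $u_e^* u_e = p_{t(e)}$ when $\mu(e) \le 1$. By replacing $e$ with $e^{\op}$ as needed (recalling $Y_e^* = \sqrt{\mu(e)} Y_{e^{\op}}$, so the polar parts are related), I can arrange to compare source projections along edges. The key building block: for a single edge $e$, the element $u_e$ itself is a partial isometry, but it need not lie in the centralizer — however, by the Corollary above, $Y_e$ is an eigenoperator of $\Delta_\vphi$, hence $u_e$ is too (eigenoperators are closed under polar decomposition, with $u_e$ carrying the eigenvalue), so $u_e u_e^*$ and $u_e^* u_e$ are genuinely projections in $\cM^\vphi$, and moreover $u_e u_e^*$ is equivalent in $\cM^\vphi$ to $u_e^* u_e$ via an honest centralizer partial isometry — wait, $u_e$ connects them but isn't in $\cM^\vphi$ unless the eigenvalue is $1$. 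The standard fix: if $w$ is an eigenoperator with $w^*w = q$, $ww^* = p$, then $p$ and $q$ are equivalent \emph{projections in the centralizer} because $\cM^\vphi$ contains enough unitaries, or more directly because $z(p : \cM^\vphi) = z(q : \cM^\vphi)$ — indeed $w$ intertwines $q \cM^\vphi q$ and $p\cM^\vphi p$ compatibly with $\vphi$ up to scaling. I would cite or quickly prove: \emph{if $w \in \cM$ is an eigenoperator of $\Delta_\vphi$ with source projection $q$ and range projection $p$, then $z(p : \cM^\vphi) = z(q : \cM^\vphi)$}, since $w \cM^\vphi w^* $ generates (together with $p$) a corner and $w^* (\text{anything in } p\cM^\vphi p) w \in q\cM^\vphi q$.

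With that lemma in hand, the proof becomes a connectivity argument on $\Gamma$. For any edge $e$, $z(u_e u_e^* : \cM^\vphi) = z(u_e^* u_e : \cM^\vphi)$, and $u_e^* u_e = p_{t(e)}$ when $\mu(e) \le 1$ while $u_e u_e^* = p_{s(e)}$ when $\mu(e) \ge 1$; in the borderline/other cases one of $u_e u_e^*, u_e^*u_e$ equals $p_{s(e)}$ or $p_{t(e)}$ after possibly swapping $e \leftrightarrow e^{\op}$. The upshot is that for every edge $e$, the vertex projections $p_{s(e)}$ and $p_{t(e)}$ (or subprojections thereof that are full in the relevant corner) have the same central support in $\cM^\vphi$ — more carefully, $z(p_{s(e)} : \cM^\vphi) = z(p_{t(e)} : \cM^\vphi)$ follows because $Y_e = u_e|Y_e|$ has $u_e u_e^*$ with central support $z(p_{s(e)})$ when... hmm, I need $u_e u_e^*$ to have central support equal to $z(p_{s(e)})$, which requires knowing $u_e u_e^*$ is ``full'' in $p_{s(e)} \cM^\vphi p_{s(e)}$. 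This holds because $|Y_e|$ generates a diffuse algebra in $p_{s(e)}\cM^\vphi p_{s(e)}$ when $\mu(e)\ge 1$ (so $u_e u_e^* = p_{s(e)}$ outright) and the analysis via the generalized circular element distribution recorded earlier pins down these projections. Then since $\Gamma$ is connected, walking along edges shows all $z(p_v : \cM^\vphi)$ are equal, call it $z_0$; and since every $u_f u_f^* \le p_{s(f)} \le z_0$ while also $z(u_e u_e^* : \cM^\vphi) = z(p_{s(e)} : \cM^\vphi) = z_0 \ge \bigvee_f u_f u_f^* = z$, combined with $z(u_e u_e^* : \cM^\vphi) \le z$, we get $z(u_e u_e^* : \cM^\vphi) = z$.

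The main obstacle I anticipate is the bookkeeping around non-unitality: $u_e u_e^*$ is a \emph{proper} subprojection of $p_{s(e)}$ precisely when $\mu(e) < 1$, so I cannot naively say $z(u_e u_e^* : \cM^\vphi) = z(p_{s(e)} : \cM^\vphi)$ without an argument. The resolution is to note that $u_e u_e^*$ is equivalent in $\cM^\vphi$ (via the eigenoperator $u_e$, using the centralizer-equivalence lemma) to $u_e^* u_e$, which equals $p_{t(e)}$ when $\mu(e) \le 1$; so $z(u_e u_e^* : \cM^\vphi) = z(p_{t(e)} : \cM^\vphi)$, and dually $z(u_{e^{\op}} u_{e^{\op}}^* : \cM^\vphi) = z(p_{s(e)} : \cM^\vphi)$ since $\mu(e^{\op}) \ge 1$. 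But $u_{e^{\op}} u_{e^{\op}}^* $ and $u_e u_e^*$ are related through $Y_e^* = \sqrt{\mu(e)} Y_{e^{\op}}$: the polar decomposition of $Y_e^*$ is $u_e^* |Y_e^*| = u_e^*\, u_e |Y_e| u_e^*$, so $u_{e^{\op}} = u_e^*$ up to the scalar, giving $u_{e^{\op}} u_{e^{\op}}^* = u_e^* u_e$. Hence $z(p_{s(e)} : \cM^\vphi) = z(u_e^* u_e : \cM^\vphi) = z(u_e u_e^* : \cM^\vphi)$ after all. Chaining this over the connected graph $\Gamma$ (which has at least two pairs of edges, so $\cM^\vphi$ is large enough for these equivalences to be non-degenerate) yields that all these central supports coincide, and since their join is exactly $z = \bigvee_{f\in E} u_f u_f^*$, each one equals $z$. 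I would present this as: (1) eigenoperator $\Rightarrow$ source/range projections have equal centralizer central support; (2) therefore $z(u_e u_e^* : \cM^\vphi) = z(p_{s(e)} : \cM^\vphi)$ for each $e$; (3) connectedness of $\Gamma$ forces all $z(p_v : \cM^\vphi)$ equal; (4) identify the common value with $\bigvee_f u_f u_f^*$.
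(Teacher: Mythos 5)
There is a genuine gap, and it sits at the heart of your argument: the lemma you propose in step (1) --- \emph{if $w$ is an eigenoperator of $\Delta_\vphi$ with $w^*w=q$ and $ww^*=p$, then $z(p\colon \cM^\vphi)=z(q\colon \cM^\vphi)$} --- is false in general, and your justification for it (that $w$ intertwines $q\cM^\vphi q$ with $p\cM^\vphi p$) does not prove it. Having spatially intertwined, isomorphic corners does not force equal central supports. Concretely, take $(\cB(\cH),\psi_\lambda)$ with matrix units $e_{i,j}$: the centralizer is the diagonal masa, $w=e_{0,1}$ is an eigenoperator with $w^*w=e_{1,1}$ and $ww^*=e_{0,0}$, yet $z(e_{1,1}\colon \ell^\infty)=e_{1,1}$ and $z(e_{0,0}\colon\ell^\infty)=e_{0,0}$ are orthogonal. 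Since your entire propagation from $u_eu_e^*$ up to $p_{s(e)}$, and then across edges of $\Gamma$, routes through this lemma, the proof does not go through as written. (Your algebraic identity $u_{e^{\op}}u_{e^{\op}}^*=u_e^*u_e$ is fine, but it only relocates the problem.)

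The paper's proof avoids this in two ways that you would need to import. First, it only propagates between vertices along edges $f$ of the tracial subgraph $\Gamma_{\Tr}$, for which $Y_f$ has eigenvalue $1$, so $u_f$ genuinely lies in $\cM(\Gamma,\mu)^\vphi$ and $u_fu_f^*\sim u_f^*u_f$ is an honest Murray--von Neumann equivalence \emph{in the centralizer}; since $V(\Gamma_{\Tr})=V$, connectivity of $\Gamma_{\Tr}$ suffices. Second, to get from a single $u_eu_e^*$ up to all of $q_v=\bigvee_{s(f)=v}u_fu_f^*$ at a vertex $v$ (your ``fullness'' worry), it uses freeness of the elements $Y_fY_f^*$, $s(f)=v$, to identify $p_vW^*(Y_fY_f^*\colon s(f)=v)p_v$ with $L(\F_s)\oplus\C$ (or $L(\Z)\oplus\C$) sitting inside $\cM(\Gamma,\mu)^\vphi$, where the factor summand has unit $q_v$; factoriality of that summand forces $z(u_eu_e^*\colon\cM^\vphi)p_v\geq q_v$. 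Your sketch gestures at the distribution of a single $|Y_e|$ but never invokes the joint freeness at a vertex, which is the ingredient that actually delivers the lower bound when $\mu(e)<1$.
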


\begin{proof}

First note that this is immediate if $|V| = 1$, for in this case $\cM(\Gamma, \mu) = (T_{H}, \vphi_{H})$, where $H=H(\Gamma,\mu),$ see Lemma \ref{lem:base_case0} below, and $(T_{H}, \vphi_{H})^{\vphi}$ is a factor.  We therefore assume $|V| \geq 2$.

Fix $e\in E$ and set $z' = z(u_{e}u_{e}^{*}: \cM(\Gamma, \mu)^{\vphi})$.  It is clear by definition that $z \in \cM(\Gamma, \mu)^{\vphi}$ and is a central projection in $\cM(\Gamma, \mu)$ (it must commute with each $Y_{e}$ and $p_{v}$), hence $z' \leq z$.
Note that one must have $\sum_{w \in V} z'p_{w} = z'$.  
To show that $z' = z$, we will argue that for each $w \in V$, $z'p_{w} = zp_{w}$, the latter of which is $q_{w} = \bigvee_{s(f) = w} u_{f}u_{f}^{*}$.

Denote $v = s(e)$, and set $S = \{f \in E: s(f) = v\}$. Consider $\cN_{v} := p_{v}W^{*}(Y_{f}Y_{f}^{*}\, : \, f \in S)p_{v}$.  Note that $\cN_{v} \subset \cM(\Gamma, \mu)^{\varphi}$.  By freeness, together with \cite{MR1201693}, we see that 
\[
\cN_{v} \cong \begin{cases}
\overset{q_v}{L(\F_{s})} \oplus \overset{p_{v} - q_{v}}{\C} &\text{ if } |S| \geq 2\\
\overset{q_v}{L(\Z)} \oplus \overset{p_{v} - q_{v}}{\C} &\text{ if } |S| = 1
\end{cases}
\]
for some $s > 1$.  If $|S| \geq 2$, then $L(\F_{s})$ is a factor, and $q_{v}$ is the identity of $L(\F_{s})$, so we see that $z'p_{v} \geq q_v= zp_{v}$, hence we have equality. If $|S|=1$, we simply have $q_v=u_eu_e^*$ and so  $z'p_{v} = zp_{v}$ is immediate.

We now examine the vertices $w$ which are connected to $v$ through an edge $f \in E(\Gamma_{\Tr})$.  Let $s(f) = v$, and $t(f) = w$.  
From above, $z(u_{f}u_{f}^{*}: \cM(\Gamma, \mu)^{\vphi}) = z'$  since we have equality under $p_{v}$.  Since $u_f\in \cM(\Gamma,\mu)^\vphi$,
it follows that $z'=z(u_{f}^{*}u_{f}: \cM(\Gamma, \mu)^{\vphi})$.  Using the arguments in the previous paragraph applied to $f^{\op}$, it follows that $z'p_{w} = \bigvee_{s(g) = w} u_{g}u_{g}^{*}$, which is $zp_{w}$.

To finish the proof, we iterate this computation, and note that every vertex admits an edge which is in $\Gamma_{\Tr}$ (provided $|V| \geq 2$).
\end{proof}

\begin{rem}\label{rem:central_support_same_in_centralizer}
As mentioned in the above proof,  the projection $z$ is central in $\cM(\Gamma, \mu)$.  It therefore follows that $z(u_{e}u_{e}^{*}: \cM(\Gamma, \mu)) = z$, i.e. $z(u_{e}u_{e}^{*}: \cM(\Gamma, \mu)) = z(u_{e}u_{e}^{*}: \cM(\Gamma, \mu)^{\vphi})$.
\end{rem}

\begin{rem}\label{rem:atomic}
Clearly, $\cM(\Gamma, \mu) = z\cM(\Gamma, \mu) + (1-z)\cM(\Gamma, \mu)$.  Furthermore, since $1-z$ is orthogonal to every $Y_{e}$, we see that $(1-z)\cM(\Gamma, \mu)$ is purely atomic:
$$
(1-z)\cM(\Gamma, \mu) = \bigoplus_{v \in V} \underset{\vphi(p_{v})\left[1 - \sum_{s(e) = v}\mu(e)\right]}{\overset{r_{v}}{\C}}
$$
with $r_{v} \leq p_{v}$.  Therefore to understand the structure of $\cM(\Gamma, \mu)$, it is sufficient to determine $z\cM(\Gamma, \mu)$.
\end{rem}


\section{Technical Tools}\label{sec:technical_tools}


\subsection{Matricial Model}\label{subsec:matrix_model}

Let $H\subset \bbR^+$ be a subgroup and let $\lambda_0\in H\cap (0,1)$. Let $\cH$ be a separable infinite-dimensional Hilbert space and $\{e_{i,j}\}_{i,j\in \bbN_0}$ be a system of matrix units for $\cB(\cH)$. Let $\psi_{\lambda_0}$ be as in Subsection~\ref{subsec:status_quo} and fix $t\in (0,1)$. In this section we produce a matricial model for
	\[
		(T_H,\varphi_H) * \left[(\cB(\cH),\psi_{\lambda_0})\otimes ( \underset{t}{\bbC} \oplus \underset{1-t}{\bbC})\right],
	\]
which we note is isomorphic to $(T_H,\varphi_H)$ by Lemma~\ref{lem:mini_absorption}.

Let $\cK$ be a Hilbert space with orthonormal basis
	\[
		\left\{\xi(\lambda,i,j,a,b), \eta(\lambda,i,j,a,b)\colon \lambda\in H,\ i,j\in\bbN_0,\ a,b\in \{0,1\}\right\}.
	\]
The ambient von Neumann algebra that will contain our matricial model is:
	\[
		(B,\Phi):=( \cB(\cF(\cK)),\omega) \otimes \left[(\cB(\cH),\psi_{\lambda_0})\otimes \underset{t,1-t}{M_2(\bbC)}\right],
	\]
where $\omega$ is the vacuum state on $\cB(\cF(\cK))$. For each $\lambda\in H$, define operators in $B$
	\begin{align*}
		L^\xi_\lambda &:=\sum_{\substack{i,j\in\bbN_0\\ a,b\in \{0,1\}}} \sqrt{\lambda_0^i(1-\lambda_0)[(1-a)t+ a(1-t)]}\ell(\xi(\lambda, i,j,a,b) )\otimes e_{i,j}\otimes e_{a,b} \\
		L^\eta_\lambda&:=\sum_{\substack{i,j\in\bbN_0\\ a,b\in \{0,1\}}} \sqrt{\lambda_0^j(1-\lambda_0)[(1-b)t+ b(1-t)]}\ell(\eta(\lambda, i,j,a,b) )\otimes e_{j,i}\otimes e_{b,a}
	\end{align*}
We then define for each $\lambda\in H$
	\[
		Y_\lambda:= L^\xi_\lambda + \sqrt{\lambda} (L^\eta_\lambda)^*.
	\]
For each $\lambda\in H$, let $y_\lambda$, $\lambda\in H$, denote the generalized circular element generating $T_\lambda\subset T_H$. It follows from \cite[Theorem 5.2]{Shl97} that the map
	\begin{align*}
		y_\lambda&\mapsto Y_\lambda\\
		e_{i,j}\otimes (\alpha,\beta)&\mapsto 1\otimes e_{i,j} \otimes (\alpha e_{0,0} + \beta e_{1,1})
	\end{align*}
extends to a state-preserving embedding
	\[
		(T_H,\varphi_H) * \left[(\cB(\cH),\psi_{\lambda_0})\otimes ( \underset{t}{\bbC} \oplus \underset{1-t}{\bbC})\right]\hookrightarrow (B,\Phi).
	\]

\subsection{Compression and Amplification Lemmas}

\begin{lem}\label{lem:antman1}

Suppose that $(M, \varphi) \cong (T_H,\varphi_H)$ for some non-trivial, countable subgroup $H$ of $\R^+$.  Let $p \in M^\vphi$ be a projection.  Then
	\[
		\left(pMp, \vphi^p\right) \cong (T_H, \varphi_H).
	\]
\end{lem}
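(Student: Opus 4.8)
The plan is to reduce, via the centralizer, to a single favourably chosen projection and then to identify the corresponding corner of $M$ explicitly using the matricial model of Subsection~\ref{subsec:matrix_model}. The first ingredient is that the centralizer $M^{\vphi}\cong (T_H)^{\vphi_H}$ is a $\mathrm{II}_1$ factor (this is well known for almost periodic free Araki--Woods factors; see \cite{Shl97}), so that $\vphi|_{M^{\vphi}}$ is its unique trace. Consequently, if $p,q\in M^{\vphi}$ are projections with $\vphi(p)=\vphi(q)$, there is a partial isometry $u\in M^{\vphi}$ with $u^*u=p$, $uu^*=q$; since $u$ lies in the centralizer one checks $\vphi(uxu^*)=\vphi(xp)$ for $x\in pMp$, so $x\mapsto uxu^*$ is a $\vphi$-preserving $*$-isomorphism of $(pMp,\vphi^p)$ onto $(qMq,\vphi^q)$. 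Hence $(pMp,\vphi^p)$ depends, up to state-preserving isomorphism, only on $\tau:=\vphi(p)\in(0,1]$; the case $\tau=1$ being trivial, it remains to treat $\tau\in(0,1)$.

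Fix $\tau\in(0,1)$. Since $H$ is non-trivial it contains arbitrarily small positive elements, so I may pick $\lambda_0\in H\cap(0,1)$ with $1-\lambda_0>\tau$ and then $t\in(0,1)$ with $(1-\lambda_0)t=\tau$. By Lemma~\ref{lem:mini_absorption} (together with absorption of $L(\bbZ)$), $(M,\vphi)\cong(T_H,\vphi_H)\cong(T_H,\vphi_H)*\bigl[(\cB(\cH),\psi_{\lambda_0})\otimes(\,\underset{t}{\bbC}\oplus\underset{1-t}{\bbC}\,)\bigr]$, and I realize the right-hand side state-preservingly inside $(B,\Phi)$ via the matricial model. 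Let $p:=1\otimes e_{00}\otimes e_{00}\in B$: it is the image of the minimal projection $e_{00}\otimes(1,0)$ of $\cB(\cH)\otimes(\bbC\oplus\bbC)$, hence $p\in M$; it lies in $M^{\vphi}$ because that minimal projection lies in the centralizer of $\psi_{\lambda_0}\otimes(\cdot)$; and $\vphi(p)=\psi_{\lambda_0}(e_{00})\,t=(1-\lambda_0)t=\tau$.

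The heart of the argument is to identify $(pMp,\vphi^p)$. Working inside $pBp\cong\cB(\cF(\cK))$, I would use the copy of $1\otimes\cB(\cH)\otimes(\bbC e_{00}\oplus\bbC e_{11})\subset M$ as two systems of matrix units compatible with $p$, and carry out the Dykema-style reduction that expresses $pMp$ as the von Neumann algebra generated by the corner-to-corner components of the generators $Y_\lambda=L^{\xi}_\lambda+\sqrt{\lambda}\,(L^{\eta}_\lambda)^{*}$. From the explicit formulas, these components are, up to positive scalars, operators of the form $\ell(\xi)+\sqrt{\lambda}\,\ell(\eta)^{*}$ built from mutually orthogonal basis vectors of $\cK$; hence they form a $*$-free family of (rescaled) generalized circular elements with respect to the vacuum state, containing an eigenoperator of eigenvalue $\lambda$ for every $\lambda\in H$. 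Therefore $(pMp,\vphi^p)$ is generated by generalized circular elements, has the $\mathrm{II}_1$ factor $pM^{\vphi}p$ as its centralizer, and $\vphi^p$ is almost periodic with point spectrum exactly $H$ (contained in $H$ because every generator of $M$ is a $\Delta_\vphi$-eigenoperator with eigenvalue in $H$, and containing each $\lambda\in H$ because $pY_\lambda p\neq0$ is an eigenoperator of eigenvalue $\lambda$). Shlyakhtenko's classification of almost periodic free Araki--Woods factors with their free quasi-free states (as in \cite[Theorem 6.4]{Shl97}) then gives $(pMp,\vphi^p)\cong(T_H,\vphi_H)$, and combined with the first paragraph this proves the lemma for every projection $p\in M^{\vphi}$.

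I expect the main obstacle to be precisely this last identification: verifying that the corner-to-corner components of the $Y_\lambda$ (together with $p$ and scalars) really do generate all of $pMp$ — i.e.\ that conjugating the $\cB(\cH)$-matrix units into the corner produces nothing further — which is exactly the free-product bookkeeping that the matricial model is designed to make tractable, and then recognising the resulting von Neumann algebra, with the restricted vacuum state, as $(T_H,\vphi_H)$ itself rather than some other free Araki--Woods factor or one carrying spurious atoms.
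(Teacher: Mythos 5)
Your opening reduction is correct and coincides with the paper's first step: since $M^\vphi\cong T_H^{\vphi_H}$ is a $\mathrm{II}_1$ factor, projections in $M^\vphi$ of equal $\vphi$-value are conjugate by a partial isometry in the centralizer, and such conjugation preserves the compressed states, so it suffices to treat one projection of each value $\tau\in(0,1)$.

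The gap is in the step you yourself flag as "the heart of the argument," which you do not carry out and which, as sketched, fails. With $p=1\otimes e_{00}\otimes e_{00}$ minimal in $\cB(\cH)\otimes(\bbC\oplus\bbC)$, the only matrix units available \emph{inside} $M$ are those of $\cB(\cH)\otimes(\bbC e_{00}\oplus\bbC e_{11})$; they connect $p$ only to projections under $1\otimes 1\otimes e_{00}$ and never reach the $e_{11}$-summand. So there is no family of partial isometries in the free factor with initial projection $p$ and range projections summing to $1$, and the "corner-to-corner components of $Y_\lambda$" are not even elements of $M$ when an index lies in the $e_{11}$-summand. If you form them in the ambient $B$ instead, the component between the corners indexed by $(i,a)$ and $(j,b)$ is a generalized circular element with distributional parameter $\lambda\lambda_0^{j-i}\cdot\frac{(1-b)t+b(1-t)}{(1-a)t+a(1-t)}$; for $(a,b)=(0,1)$ this contains the factor $(1-t)/t$, which is generally not in $H$, so a free family with these parameters would generate $T_{\langle H,(1-t)/t\rangle}$ rather than $T_H$ — a concrete sign that the naive decomposition cannot describe $pMp$. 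This is exactly the difficulty the paper's proof is engineered to avoid: its second case chooses a "staircase" projection $r_\ell$ and partial isometries $u_i$ that straddle the two summands of $\bbC\oplus\bbC$ in an overlapping pattern, and then replaces the naive corners $u_i^*Y_\lambda u_j$ by the combined operators $Z(\lambda,i,j)=u_i^*[\cdots]u_i+\sqrt{\lambda_0^{k+1}}\,u_{i-1}^*[\cdots]u_{i-1}$, which do form a free family of generalized circular elements with eigenvalues generating exactly $H$. Finally, your fallback — almost periodicity with point spectrum $H$ plus "generated by generalized circular elements" implies $(pMp,\vphi^p)\cong(T_H,\vphi_H)$ by Shlyakhtenko's Theorem 6.4 — is not a valid substitute: that theorem classifies free Araki--Woods factors \emph{equipped with their free quasi-free states}, and does not assert that an arbitrary almost periodic state with point spectrum $H$ on an arbitrary factor is of that form. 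Exhibiting the free-product structure of $(pMp,\vphi^p)$ is the entire content of the lemma, and it is precisely what your sketch leaves open.
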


\begin{proof}
Fix $\lambda_0\in H\cap (0,1)$ and $t\in [0,1]$. Identify $(M,\varphi)$ with our matricial model for $(T_H,\vphi_H)$ from Subsection~\ref{subsec:matrix_model}. Suppose $\vphi(p) = \lambda_0^{l}(1 - \lambda_0^{k})$ for some $\ell\in \bbN_0$ and $k\in \bbN$. Since the centralizer of a free Araki--Woods factor with respect to its free quasi-free state is a factor, without loss of generality we may assume
	\[
		p = 1\otimes (e_{\ell,\ell} + e_{\ell+1,\ell+1} + \cdots + e_{\ell+k-1, \ell+k-1})\otimes 1 \in \cB(\cF(\cK)) \otimes \cB(\cH)\otimes M_2(\bbC)
	\]
For each $i\in\bbN_0$, define $v_i := 1\otimes (e_{ik, \ell} + e_{ik+1,\ell+1} + \cdots + e_{i(k+1)-1,\ell+k-1})\otimes 1$, so that $v_i^*v_i = p$ and
	\[
		v_iv_i^* = 1\otimes (e_{ik,ik} + e_{ik+1,ik+1} + \cdots + e_{(i+1)k-1, (i+1)k -1})\otimes 1.
	\]
Observe that $\sum_i v_iv_i^*=1$. Now, $pMp$ is generated by $p[\cB(\cH)\otimes (\bbC\oplus\bbC)]p$ and $\{v_i^* Y_\lambda v_j\colon i,j\in \bbN_0,\ \lambda\in H\}$. Moreover, from the matricial model it is even clear that
	\begin{align*}
		(pMp,\vphi^p)\cong \left( p[\cB(\cH)\otimes (\bbC\oplus\bbC)]p,\Phi^p\right) * \bigast_{\substack{i,j\in \bbN_0\\ \lambda\in H}}  \left(W^*( v_i^*Y_\lambda v_j),\Phi^p\right),
	\end{align*}
For $i,j\in \bbN_0$ and $\lambda\in H$ we have
	\[
		(v_i^* Y_\lambda v_j,\Phi^p) \stackrel{d}{\sim} \left(\sqrt{\lambda_0^i} y_{\lambda\lambda_0^{j-i}}, \vphi_{\lambda\lambda_0^{j-i}}\right),
	\]
where $\stackrel{d}{\sim} $ means equality in distribution.  Since $\< \lambda \lambda_0^{j-i}\colon \lambda \in H,\ i,j\in \bbN_0\> = H$, we have
	\[
		\bigast_{\substack{i,j\in \bbN_0\\ \lambda\in H}} \left( W^*( v_i^*Y_\lambda v_j), \Phi^p\right) \cong \bigast_{\substack{i,j\in \bbN_0\\ \lambda\in H}} \left(T_{\lambda\lambda_0^{j-i}},\vphi_{\lambda\lambda_0^{j-i}}\right) \cong (T_H,\vphi_H). 
	\]
Noting that 
	\[
		\left( p[\cB(\cH)\otimes (\bbC\oplus\bbC)]p,\Phi^p\right) \cong (M_k(\bbC),\psi_{\lambda_0})\otimes (\underset{t}{\bbC}\oplus \underset{1-t}{\bbC}),
	\]
the desired isomorphism then follows from Lemma~\ref{lem:mini_absorption}.

Next, assume that $\vphi(p)$ is not of the above form and that there exists $k\in \bbN_0$ such that $\vphi(p) < 1 - \lambda_0^{k+1}$. Define for each $\ell\in\bbN_0$ $p_\ell = 1\otimes e_{\ell,\ell}\otimes e_{0,0}$ and $q_\ell=1\otimes e_{\ell,\ell}\otimes e_{1,1}$. Set $r_\ell = q_\ell + 1\otimes [e_{\ell+1,\ell+1}+ \cdots +e_{\ell+k,\ell+k}]\otimes 1 + p_{\ell+k+1}$ which we note is in the centralizer and satisfies
	\[
		\Phi(r_\ell) = (1-t) \lambda_0^\ell(1-\lambda_0) + \lambda_0^{\ell+1}(1-\lambda_0) + \cdots + \lambda_0^{\ell+k}(1-\lambda_0) + t \lambda_0^{\ell+k+1}(1-\lambda_0).
	\] 
We visualize $r_\ell$ as follows (here $\ell=k=1$):
	
	\[
		\begin{tikzpicture}
		\node[left] at (-0.5,-2) {$\cB(\cH) \otimes \left( \bbC\oplus \bbC\right)$:};
		
		\draw[green!60!black, fill= green!60!black] (0,0) rectangle (0.5,-0.5);
		\draw[green!60!black, fill= green!60!black] (0.5,-0.5) rectangle (1,-1);
		\draw[green!60!black, fill= green!60!black] (1,0) rectangle (1.5,-0.5);
		\draw[green!60!black, fill= green!60!black] (1.5,-0.5) rectangle (2,-1);
		\draw[green!60!black, fill= green!60!black] (2,0) rectangle (2.5,-0.5);
		\draw[green!60!black, fill= green!60!black] (2.5,-0.5) rectangle (3,-1);
		\draw[green!60!black, fill= green!60!black] (3,0) rectangle (3.5,-0.5);
		\draw[white, pattern=north west lines, pattern color=green!60!black] (3.5,-0.5) rectangle (4,-1);
		\draw[green!60!black, fill= green!60!black] (0,-1) rectangle (0.5,-1.5);
		\draw[green!60!black, fill= green!60!black] (0.5,-1.5) rectangle (1,-2);
		\draw[green!60!black, fill= green!60!black] (1,-1) rectangle (1.5,-1.5);
		\draw[green!60!black, fill= green!60!black] (1.5,-1.5) rectangle (2,-2);
		\draw[green!60!black, fill= green!60!black] (2,-1) rectangle (2.5,-1.5);
		\draw[green!60!black, fill= green!60!black] (2.5,-1.5) rectangle (3,-2);
		\draw[green!60!black, fill= green!60!black] (3,-1) rectangle (3.5,-1.5);
		\draw[white, pattern=north west lines, pattern color=green!60!black] (3.5,-1.5) rectangle (4,-2);
		\draw[green!60!black, fill= green!60!black] (0,-2) rectangle (0.5,-2.5);
		\draw[green!60!black, fill= green!60!black] (0.5,-2.5) rectangle (1,-3);
		\draw[green!60!black, fill= green!60!black] (1,-2) rectangle (1.5,-2.5);
		\draw[green!60!black, fill= green!60!black] (1.5,-2.5) rectangle (2,-3);
		\draw[green!60!black, fill= green!60!black] (2,-2) rectangle (2.5,-2.5);
		\draw[green!60!black, fill= green!60!black] (2.5,-2.5) rectangle (3,-3);
		\draw[green!60!black, fill= green!60!black] (3,-2) rectangle (3.5,-2.5);
		\draw[white, pattern=north west lines, pattern color=green!60!black] (3.5,-2.5) rectangle (4,-3);	
		\draw[green!60!black, fill= green!60!black] (0,-3) rectangle (0.5,-3.5);
		\draw[white, pattern=north west lines, pattern color=green!60!black] (0.5,-3.5) rectangle (1,-4);
		\draw[green!60!black, fill= green!60!black] (1,-3) rectangle (1.5,-3.5);
		\draw[white, pattern=north west lines, pattern color=green!60!black] (1.5,-3.5) rectangle (2,-4);
		\draw[green!60!black, fill= green!60!black] (2,-3) rectangle (2.5,-3.5);
		\draw[white, pattern=north west lines, pattern color=green!60!black] (2.5,-3.5) rectangle (3,-4);
		\draw[green!60!black, fill= green!60!black] (3,-3) rectangle (3.5,-3.5);
		\draw[white, pattern=north west lines, pattern color=green!60!black] (3.5,-3.5) rectangle (4,-4);
		
		\draw[black] (0,-1) --++ (3.5,0);
		\draw[black, dashed] (3.5,-1) --++ (1,0);
		\draw[black] (0,-2) --++ (3.5,0);
		\draw[black, dashed] (3.5,-2) --++ (1,0);
		\draw[black] (0,-3) --++ (3.5,0);
		\draw[black, dashed] (3.5,-3) --++ (1,0);
		
		\draw[black] (1,0) --++ (0,-3.5);
		\draw[black, dashed] (1,-3.5) --++ (0,-1);
		\draw[black] (2,0) --++ (0,-3.5);
		\draw[black, dashed] (2,-3.5) --++ (0,-1);
		\draw[black] (3,0) --++ (0,-3.5);
		\draw[black, dashed] (3,-3.5) --++ (0,-1);
		
		\draw[very thick] (0,0) --++ (3.5,0);
		\draw[very thick, dashed] (3.5,0) --++ (1,0);
		\draw[very thick] (0,0) --++ (0,-3.5);
		\draw[very thick,dashed] (0,-3.5) --++ (0,-1);

		\node[left] at (6.5,-2) {$r_\ell$:};

		\draw[green!60!black, fill= green!60!black] (8.5,-1.5) rectangle (9,-2);
		\draw[green!60!black, fill= green!60!black] (9,-2) rectangle (9.5,-2.5);
		\draw[green!60!black, fill= green!60!black] (9.5,-2.5) rectangle (10,-3);
		\draw[green!60!black, fill= green!60!black] (10,-3) rectangle (10.5,-3.5);

		\draw[black] (7,-1) --++ (3.5,0);
		\draw[black, dashed] (10.5,-1) --++ (1,0);
		\draw[black] (7,-2) --++ (3.5,0);
		\draw[black, dashed] (10.5,-2) --++ (1,0);
		\draw[black] (7,-3) --++ (3.5,0);
		\draw[black, dashed] (10.5,-3) --++ (1,0);

		\draw[black] (8,0) --++ (0,-3.5);
		\draw[black, dashed] (8,-3.5) --++ (0,-1);
		\draw[black] (9,0) --++ (0,-3.5);
		\draw[black, dashed] (9,-3.5) --++ (0,-1);
		\draw[black] (10,0) --++ (0,-3.5);
		\draw[black, dashed] (10,-3.5) --++ (0,-1);
		
		\draw[very thick] (7,0) --++ (3.5,0);
		\draw[very thick, dashed] (10.5,0) --++ (1,0);
		\draw[very thick] (7,0) --++ (0,-3.5);
		\draw[very thick,dashed] (7,-3.5) --++ (0,-1);
		
		\draw[thick, green!60!black, dashed] (8.5,-1.5) rectangle (10.5,-3.5);
		\end{tikzpicture}
	\]
Observe that $\Phi(r_\ell) \in \left( \lambda_0^{\ell+1}(1-\lambda_0^{k+1}), \lambda_0^\ell(1-\lambda_0^{k+1})\right)$. We can find $\ell\in \bbN_0$ such that
	\[
		\vphi(p)\in \left( \lambda_0^{\ell+1}(1-\lambda_0^{k+1}), \lambda_0^\ell(1-\lambda_0^{k+1})\right),
	\]  
and hence (adjusting our entire matricial model) we can pick $t\in (0,1)$ such that $\Phi(r_\ell)=\varphi(p)$. Then we may assume---without loss of generality---that $p=r_\ell$. Now, define
	\begin{align*}
		u_i := 1\otimes\left[e_{i(k+1),\ell}\otimes e_{1,1} \right.&+ \left(e_{i(k+1)+1,\ell+1}+ \cdots + e_{(i+1)(k+1)-1,\ell+k}\right)\otimes 1\\
			 &\left. +e_{(i+1)(k+1),\ell+k+1}\otimes e_{0,0}\right]
	\end{align*}
so that $u_i^*u_i=p$ and
	\[
		u_iu_i^* = q_{i(k+1)} + 1\otimes \left(e_{i(k+1)+1, i(k+1)+1} + \cdots +e_{(i+1)(k+1) - 1,(i+1)(k+1)-1}\right)\otimes 1 + p_{(i+1)(k+1)}.
	\]
Set $u_{-1}:= 1\otimes e_{0,\ell+k+1}\otimes e_{0,0}$.  We visualize the family $\{u_i\}_{i=-1}^\infty$ as follows (here $\ell=k=1$):
	\[
		\begin{tikzpicture}
		\draw[green!60!black, fill= green!60!black] (-2.5,-.5) rectangle (-2,-1);
		\node[left] at (-2.5,-.8) {$u_{-1}$:};

		\draw[red!60!black, fill= red!60!black] (-2.5,-1.5) rectangle (-2,-2);
		\node[left] at (-2.5,-1.8) {$u_{0}$:};

		\draw[blue!60!black, fill= blue!60!black] (-2.5,-2.5) rectangle (-2,-3);
		\node[left] at (-2.5,-2.8) {$u_{1}$:};

		\draw[green!60!black, fill= green!60!black] (3,0) rectangle (3.5,-0.5);
		\draw[white, pattern=north west lines, pattern color=green!60!black] (3.5,-0.5) rectangle (4,-1);

		\draw[red!60!black, fill= red!60!black] (1.5,-.5) rectangle (2,-1);
		\draw[red!60!black, fill= red!60!black] (2,-1) rectangle (2.5,-1.5);
		\draw[red!60!black, fill= red!60!black] (2.5,-1.5) rectangle (3,-2);
		\draw[red!60!black, fill= red!60!black] (3,-2) rectangle (3.5,-2.5);

		\draw[blue!60!black, fill= blue!60!black] (1.5,-2.5) rectangle (2,-3);
		\draw[blue!60!black, fill= blue!60!black] (2,-3) rectangle (2.5,-3.5);
		\draw[blue!60!black, fill= blue!60!black] (2.5,-3.5) rectangle (3,-4);
		\draw[white, pattern=north west lines, pattern color=blue!60!black] (3,-4) rectangle (3.5,-4.5);
		
		\draw[black] (0,-1) --++ (3.5,0);
		\draw[black, dashed] (3.5,-1) --++ (1,0);
		\draw[black] (0,-2) --++ (3.5,0);
		\draw[black, dashed] (3.5,-2) --++ (1,0);
		\draw[black] (0,-3) --++ (3.5,0);
		\draw[black, dashed] (3.5,-3) --++ (1,0);
		
		\draw[black] (1,0) --++ (0,-3.5);
		\draw[black, dashed] (1,-3.5) --++ (0,-1);
		\draw[black] (2,0) --++ (0,-3.5);
		\draw[black, dashed] (2,-3.5) --++ (0,-1);
		\draw[black] (3,0) --++ (0,-3.5);
		\draw[black, dashed] (3,-3.5) --++ (0,-1);
		
		\draw[very thick] (0,0) --++ (3.5,0);
		\draw[very thick, dashed] (3.5,0) --++ (1,0);
		\draw[very thick] (0,0) --++ (0,-3.5);
		\draw[very thick,dashed] (0,-3.5) --++ (0,-1);

		\end{tikzpicture}
	\]
Observe that
	\[
		\sum_{i\in \bbN_0} u_{i-1}u_{i-1}^* = 1.
	\]
Thus, $p M p$ is generated by $p [\cB(\cH) \otimes (\bbC\oplus \bbC)] p$ and $\{u_{i-1}^* Y_\lambda u_{j-1}\colon i,j\in \bbN_0,\ \lambda\in H\}$. For each $i,j\in \bbN_0$ and $\lambda\in H$ define
	\[
		Z(\lambda,i,j):=\begin{cases} u_i^*\left[Y_\lambda - p_{(i+1)(k+1)}Y_\lambda p_{(i+1)(k+1)}\right] u_i + \sqrt{\lambda_0^{k+1}} u_{i-1}^* p_{i(k+1)} Y_\lambda p_{i(k+1)} u_{i-1} & \text{if }i=j\\
								u_i^*\left[Y_\lambda - p_{(i+1)(k+1)} Y_\lambda\right] u_j + \sqrt{\lambda_0^{k+1}} u_{i-1}^* p_{i(k+1)} Y_\lambda u_{j-1} & \text{if }i<j\\
								u_i^*\left[Y_\lambda -  Y_\lambda p_{(j+1)(k+1)}\right] u_j + \sqrt{\lambda_0^{k+1}} u_{i-1}^*  Y_\lambda p_{j(k+1)} u_{j-1} & \text{if }i>j
			\end{cases}.
	\]
We visualize $Z(\lambda,i,j)$ as follows (here $\ell=k=1$):
		\[
		\begin{tikzpicture}
		\draw[lightred, fill=lightred] (0,0) rectangle (2,2);
		\draw[thick, green!60!black, dashed] (0,0) rectangle (2,2);
		\draw[black] (0.5,0) --++ (0,2);
		\draw[black] (1.5,0) --++ (0,2);
		\draw[black] (0,.5) --++ (2,0);
		\draw[black] (0,1.5) --++ (2,0);
		\node[below] at (1,0) {$Z(\lambda, i,j)$};

		\draw [decorate,decoration={brace,amplitude=10pt},xshift=-4pt,yshift=0pt]	(3.25,-4.5) -- (3.25,6.5) node [black,midway,xshift=-0.6cm] 	{ $=$};

		\draw[lightred, fill=lightred] (5,6) rectangle (6.5,4.5);
		\draw[lightred, fill=lightred] (5,4.5) rectangle (6.5,4);
		\draw[lightred, fill=lightred] (6.5,6) rectangle (7,4.5);
		\draw[thick, green!60!black, dashed] (5,4) rectangle (7,6);
		\draw[black] (5.5,6) --++ (0,-2);
		\draw[black] (6.5,6) --++ (0,-2);
		\draw[black] (5,4.5) --++ (2,0);
		\draw[black] (5,5.5) --++ (2,0);
		\node[below] at (6,3.9) {\footnotesize$u_i^*[Y_\lambda-p_{(i+1)(k+1)} Y_\lambda p_{(i+1)(k+1)} ]u_i$};

		\node at (9,5) {$+$};
		
		\draw[lightred, fill=lightred] (12.5,4) rectangle (13,4.5);
		\draw[black] (11.5,6) --++ (0,-2);
		\draw[black] (12.5,6) --++ (0,-2);
		\draw[black] (11,4.5) --++ (2,0);
		\draw[black] (11,5.5) --++ (2,0);
		\draw[thick, green!60!black, dashed] (11,4) rectangle (13,6);
		\node[below] at (12,4.05) {\footnotesize$\sqrt{\lambda_0^{k+1}} u_{i-1}^* p_{i(k+1)} Y_\lambda p_{i(k+1)} u_{i-1}$};

		\node at (15,5) {if $i=j$};

		\draw[lightred, fill=lightred] (5,0.5) rectangle (7,2);
		\draw[black] (5.5,2) --++ (0,-2);
		\draw[black] (6.5,2) --++ (0,-2);
		\draw[black] (5,1.5) --++ (2,0);
		\draw[black] (5,.5) --++ (2,0);
		\draw[thick, green!60!black,dashed] (5,0) rectangle (7,2);
		\node[below] at (6,-0.1) {\footnotesize$u_i^*\left[Y_\lambda - p_{(i+1)(k+1)} Y_\lambda\right] u_j$};

		\node at (9,1) {$+$};

		\draw[lightred, fill=lightred] (11,0) rectangle (13,0.5);
		\draw[black] (11.5,2) --++ (0,-2);
		\draw[black] (12.5,2) --++ (0,-2);
		\draw[black] (11,1.5) --++ (2,0);
		\draw[black] (11,.5) --++ (2,0);
		\draw[thick, green!60!black, dashed] (11,0) rectangle (13,2);
		\node[below] at (12,0.05) {\footnotesize$\sqrt{\lambda_0^{k+1}} u_{i-1}^* p_{i(k+1)} Y_\lambda u_{j-1}$};

		\node at (15,1) {if $i<j$};

		\draw[lightred, fill=lightred] (5,-4) rectangle (6.5,-2);
		\draw[black] (5.5,-2) --++ (0,-2);
		\draw[black] (6.5,-2) --++ (0,-2);
		\draw[black] (5,-2.5) --++ (2,0);
		\draw[black] (5,-3.5) --++ (2,0);
		\draw[thick, green!60!black,dashed] (5,-4) rectangle (7,-2);
		\node[below] at (6,-4.1) {\footnotesize$u_i^*\left[Y_\lambda -  Y_\lambda p_{(j+1)(k+1)}\right] u_j$};

		\node at (9,-3) {$+$};

		\draw[lightred, fill=lightred] (12.5,-4) rectangle (13,-2);
		\draw[black] (11.5,-2) --++ (0,-2);
		\draw[black] (12.5,-2) --++ (0,-2);
		\draw[black] (11,-2.5) --++ (2,0);
		\draw[black] (11,-3.5) --++ (2,0);
		\draw[thick, green!60!black, dashed] (11,-4) rectangle (13,-2);
		\node[below] at (12,-3.95) {\footnotesize$\sqrt{\lambda_0^{k+1}} u_{i-1}^*  Y_\lambda p_{j(k+1)} u_{j-1}$};

		\node at (15,-3) {if $i>j$};
		\end{tikzpicture}
	\]
Then we have that
	\[
		(pMp,\vphi^p)\cong 	\left( p[\cB(\cH) \otimes (\bbC\oplus \bbC)] p,\Phi^p\right) * \bigast_{\substack{i,j\in \bbN_0\\ \lambda\in H}} \left( W^*(Z(\lambda,i,j)),\Phi^p\right).
	\]
Moreover,
	\[
		\left( Z(\lambda,i,j),\Phi^p\right) \stackrel{d}{\sim} \left( u_i^* Y_\lambda u_j,\Phi^p\right) \stackrel{d}{\sim} \left(\sqrt{\lambda_0^{i(k+1)}} y_{\lambda \lambda_0^{(j-i)(k+1)}}, \vphi_{\lambda\lambda_0^{(j-i)(k+1)}}\right).
	\]
Since $\< \lambda\lambda_0^{(j-i)(k+1)}\colon i,j\in \bbN_0,\ \lambda\in H\>= H$, we have
	\begin{align*}
		(pMp, \vphi^p) &\cong \left( p[\cB(\cH) \otimes (\bbC\oplus \bbC)] p, \Phi^p\right) * \bigast_{\substack{i,j\in \bbN_0\\ \lambda\in H}} (T_{\lambda\lambda_0^{(j-i)(k+1)}}, \vphi_{\lambda\lambda_0^{(j-i)(k+1)}}) \\
		& \cong \left( p[\cB(\cH) \otimes (\bbC\oplus \bbC)] p, \Phi^p\right) * (T_H,\vphi_H).
	\end{align*}
Noting that
	\[
		\left( p[\cB(\cH) \otimes (\bbC\oplus \bbC)] p, \Phi^p\right) \cong (M_{k+1}(\bbC),\psi_{\lambda_0})\otimes (\underset{t}{\bbC}\oplus \underset{1-t}{\bbC}),
	\]
the desired isomorphism then follows from Lemma~\ref{lem:mini_absorption}.
\end{proof}

\begin{lem}\label{lem:antman2}
Let $M$ be a von Neumann algebra with almost-periodic faithful normal state $\vphi$. Let $p\in M^\vphi$ a projection such that $(pMp, \vphi^p)\cong (T_H,\vphi_H)$ for some non-trivial, countable subgroup $H$ of $\R^+$, and such that $z:=z(p\colon M^\vphi)=z(p\colon M)$. Then
	\[
		(z M,\vphi^{z})\cong (T_H,\vphi_H).
	\]
In particular, if $z(p\colon M^\vphi)=1$ then $(M,\vphi)\cong (T_H,\vphi_H)$.
\end{lem}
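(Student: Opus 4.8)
The plan is to reduce the statement to the special case $z(p\colon M^\vphi)=z(p\colon M)=1_M$, and then to the single assertion that $(M_m(\bbC),\mathrm{tr}_m)\otimes(T_H,\vphi_H)\cong(T_H,\vphi_H)$ for a suitable $m\in\bbN$. First I would record the reduction: since $z$ is central both in $M$ and in $M^\vphi$, the summand $zM=zMz$ is a von Neumann algebra, $\vphi^z$ is a faithful normal almost periodic state on it with $(zM)^{\vphi^z}=zM^\vphi$, one has $p\in(zM)^{\vphi^z}$ with $(pMp,\vphi^p)=(p(zM)p,(\vphi^z)^p)\cong(T_H,\vphi_H)$, and $z(p\colon zM)=z(p\colon(zM)^{\vphi^z})=z=1_{zM}$. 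Thus, after replacing $M$ by $zM$, we may assume $z(p\colon M)=z(p\colon M^\vphi)=1_M$; this is exactly the ``in particular'' case, and proving it proves the lemma.

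Next I would establish that $M^\vphi$ is a $\mathrm{II}_1$ factor. The restriction of $\vphi$ to $M^\vphi$ is a faithful normal trace, so $M^\vphi$ is finite; since $(pMp,\vphi^p)\cong(T_H,\vphi_H)$ with $H$ non-trivial, the corner $pM^\vphi p=(pMp)^{\vphi^p}\cong(T_H)^{\vphi_H}$ is a $\mathrm{II}_1$ factor (its diffuseness comes from the copy of $L(\bbZ)$ it contains). Combining the factoriality of this corner with $z(p\colon M^\vphi)=1_M$, a routine argument with central projections (as in the proof of Lemma~\ref{lem:centralizersupport}) forces $M^\vphi$ to be a factor, hence a $\mathrm{II}_1$ factor. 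Now pick $m\in\bbN$ with $\vphi(p)\ge 1/m$ and, using that $pM^\vphi p$ is diffuse, a subprojection $p'\le p$ in $M^\vphi$ with $\vphi(p')=1/m$; applying Lemma~\ref{lem:antman1} to $(pMp,\vphi^p)\cong(T_H,\vphi_H)$ and $p'\in(pMp)^{\vphi^p}$ gives $(p'Mp',\vphi^{p'})\cong(T_H,\vphi_H)$. Since $M^\vphi$ is a $\mathrm{II}_1$ factor and $\vphi(p')=1/m$, there are mutually orthogonal projections $p'=q_1,\dots,q_m\in M^\vphi$ with $\sum_i q_i=1$ and partial isometries $v_i\in M^\vphi$ with $v_i^*v_i=p'$, $v_iv_i^*=q_i$, $v_1=p'$. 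A direct computation then shows that $e_{ij}\otimes x\mapsto v_ixv_j^*$ is a $*$-isomorphism $M_m(\bbC)\otimes p'Mp'\to M$ carrying $\mathrm{tr}_m\otimes\vphi^{p'}$ onto $\vphi$ (using $v_i^*v_j=\delta_{ij}p'$ and that, because $v_i\in M^\vphi$, the functional $\vphi$ is tracial relative to the $v_i$). Hence $(M,\vphi)\cong(M_m(\bbC),\mathrm{tr}_m)\otimes(T_H,\vphi_H)$.

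Finally I would appeal to the matricial model for free Araki--Woods factors (\cite[Section 5]{Shl97}; alternatively a variant of the matricial model of Subsection~\ref{subsec:matrix_model}): $(M_m(\bbC),\mathrm{tr}_m)\otimes(T_H,\vphi_H)$ is again an almost periodic free Araki--Woods factor equipped with its free quasi-free state, with the same eigenvalue subgroup $H$ (the tracial matrix factor contributes only the eigenvalue $1\in H$), and is therefore $\cong(T_H,\vphi_H)$ in a state-preserving way. I expect this last step to be the main obstacle: tensoring with the tracial matrix factor does not interact with the free-product description of $T_H$, so it must be handled through the matricial model, with care taken that the free quasi-free state is tracked correctly. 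A secondary point requiring vigilance throughout is that every identification above preserve the relevant positive linear functional, and that the hypothesis $z(p\colon M^\vphi)=z(p\colon M)$ is genuinely used — both to make ``$zM$'' a well-defined direct summand and in the factoriality argument for $M^\vphi$.
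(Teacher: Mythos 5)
Your reduction to $z=1$, your factoriality argument for $M^\vphi$, and the tensor decomposition $(M,\vphi)\cong(M_m(\bbC),\tr_m)\otimes(p'Mp',\vphi^{p'})$ are all correct (the first two steps coincide with the paper's own proof, and the third is a standard computation with the matrix units $v_iv_j^*$). The gap is exactly where you suspected: the assertion $(M_m(\bbC),\tr_m)\otimes(T_H,\vphi_H)\cong(T_H,\vphi_H)$ is left unproved, and the matricial model does not hand it to you. The models in \cite[Section 5]{Shl97} and in Subsection~\ref{subsec:matrix_model} realize \emph{free products} of a free Araki--Woods factor with $\cB(\cH)$ (or with $\cB(\cH)\otimes(\bbC\oplus\bbC)$) inside an ambient Fock-space algebra; they do not exhibit the \emph{tensor product} $M_m(\bbC)\otimes T_H$ as a free Araki--Woods factor carrying its free quasi-free state, and no cited result does. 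Worse, the missing claim is itself an instance of the lemma you are trying to prove: taking $M=M_m(\bbC)\otimes T_H$ and $p=e_{1,1}\otimes 1$ (a centralizer projection with central support $1$ whose corner is $(T_H,\vphi_H)$), Lemma~\ref{lem:antman2} \emph{is} the statement that $M\cong T_H$. So as written the argument is circular at its crux.

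The paper sidesteps the amplification problem by converting it into a compression problem. Since any $\lambda\in H\cap(0,1)$ lies in the point spectrum of $\Delta_{\vphi^p}$ and hence of $\Delta_\vphi$, \cite[Lemma 4.9]{Dyk97} produces an isometry $v\in M$ which is an eigenoperator with eigenvalue $\lambda$; then $q:=v^k(v^*)^k\in M^\vphi$ has $\vphi(q)=\lambda^k\le\vphi(p)$ for suitable $k$, and $x\mapsto v^kx(v^*)^k$ is a state-preserving isomorphism $(M,\vphi)\to(qMq,\vphi^q)$. Conjugating $q$ to a subprojection of $p$ inside the factor $M^\vphi$ and applying Lemma~\ref{lem:antman1} to the resulting compression of $pMp\cong T_H$ finishes the proof. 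To salvage your route you would have to give an independent proof of the amplification identity---for instance by running this same eigen-isometry trick inside $M_m(\bbC)\otimes T_H$---at which point you have reproduced the paper's argument rather than replaced it.
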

\begin{proof}
Observe that since $(M^\vphi,\vphi)$ is a tracial von Neumann algebra containing $p$, there are state preserving isomorphisms
	\[
		(z M^\vphi, \vphi^z) \cong (p M^\vphi p ,\vphi^p) = ( (pMp)^{\vphi^p}, \vphi^p) \cong (T_H^{\vphi_H},\vphi_H)\cong (L(\mathbb{F}_\infty), \tau)
	\]
Consequently, $(zM^\vphi, \vphi^z)$ is a factor. Furthermore, $z M^\vphi = (z M)^{\vphi^z}$. Thus, replacing $M$ with $zM$, we may assume $z=1$ and that $M^\vphi$ is a factor.

Fix $\lambda\in H\cap (0,1)$. By the isomorphism with $(T_H,\vphi_H)$, $\lambda$ is in the point spectrum of $\Delta_{\vphi^p}$ and hence in the point spectrum of $\Delta_\vphi$. By \cite[Lemma 4.9]{Dyk97} there exists an isometry $v\in M$ which is an eigenoperator with eigenvalue $\lambda$. Let $k\in \N$ be such that $\lambda^k \leq \vphi(p)$. Then $q=v^k(v^*)^k\in M^\vphi$ with $\vphi(q)=\lambda^k$. Since $M^\vphi$ is a factor, $q$ can be conjugated to a projection $p_0$ under $p$ via an partial isometry $w\in M^\vphi$. Thus
	\[
		(M,\vphi)\cong (qMq,\vphi^q)\cong (p_0 Mp_0, \vphi^{p_0}).
	\]
But then $p_0 Mp_0$ is a compression of $pMp$, so by the previous lemma there is a state-preserving isomorphism with $(T_H,\vphi_H)$.
\end{proof}

\begin{rem}
Observe that by Remark~\ref{rem:central_support_same_in_centralizer}, the hypothesis $z(p\colon M^\vphi)=z(p\colon M)$ in the above lemma holds automatically for $M=\cM(\Gamma,\mu)$ and $p=u_eu_e^*$ for any $e\in E$.
\end{rem}

\begin{cor}\label{cor:absorption}
Let $\lambda,\lambda_1,\ldots, \lambda_d\in (0,1)$ and let $H=\<\lambda,\lambda_1,\ldots, \lambda_d\>< \bbR^+$. Let $\cH_1,\ldots, \cH_d$ be separable Hilbert spaces equipped with respective states $\psi_{\lambda_1},\ldots, \psi_{\lambda_d}$ (each relative to some choice of matrix units). For any $t_1,\ldots, t_d>0$ with $t_1+\cdots +t_d=1$ we have
	\[
		(T_\lambda,\vphi_\lambda) * \left[ \underset{t_1}{(\cB(\cH_1),\psi_{\lambda_1})} \oplus \cdots \oplus \underset{t_d}{(\cB(\cH_d), \psi_{\lambda_d})}\right] \cong (T_H,\psi_H).
	\]
\end{cor}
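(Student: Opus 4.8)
The plan is to prove, by induction on $d$, the following slight generalization in which finitely many of the summands are allowed to be copies of $\bbC$ (this is what makes the induction close up): for $\lambda\in(0,1)$, $\lambda_1,\dots,\lambda_d\in(0,1)$, separable Hilbert spaces $\cH_1,\dots,\cH_d$, and weights $t_1,\dots,t_d,s_1,\dots,s_m>0$ with $\sum_i t_i+\sum_j s_j=1$,
\[
	(T_\lambda,\vphi_\lambda)*\Big[\,\bigoplus_{i=1}^d \underset{t_i}{(\cB(\cH_i),\psi_{\lambda_i})}\ \oplus\ \bigoplus_{j=1}^m \underset{s_j}{\bbC}\,\Big]\ \cong\ (T_H,\vphi_H),\qquad H:=\langle \lambda,\lambda_1,\dots,\lambda_d\rangle,
\]
the corollary being the case $m=0$. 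Throughout, $\vphi$ denotes the free product state; since each free component carries an almost periodic state, $\vphi$ is almost periodic, with the point spectrum of $\Delta_\vphi$ generating the nontrivial countable group $H$. For the base case $d=0$, the summand $\bigoplus_j\underset{s_j}{\bbC}$ is abelian, hence carries a tracial state, so the absorption of tracial hyperfinite algebras into $T_\lambda$ gives $(T_\lambda,\vphi_\lambda)*[\bigoplus_j\underset{s_j}{\bbC}]\cong(T_\lambda,\vphi_\lambda)\cong(T_{\langle\lambda\rangle},\vphi_{\langle\lambda\rangle})$, the last step being the generating-set description of $T_H$ with generating set $\{\lambda\}$.

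I would also isolate the following auxiliary ``$d=1$'' fact: for any nontrivial countable $K<\bbR^+$, any $\mu\in(0,1)$ and any separable $\cK$, $(T_K,\vphi_K)*(\cB(\cK),\psi_\mu)\cong(T_{\langle K,\mu\rangle},\vphi_{\langle K,\mu\rangle})$. Indeed, fixing $\lambda_0\in K\cap(0,1)$, the identities $(T_K,\vphi_K)*(T_{\lambda_0},\vphi_{\lambda_0})\cong(T_K,\vphi_K)$ and the free absorption $(T_{\lambda_0},\vphi_{\lambda_0})*(L(\bbZ),\tau)\cong(T_{\lambda_0},\vphi_{\lambda_0})$ yield $(T_K,\vphi_K)*(L(\bbZ),\tau)\cong(T_K,\vphi_K)$; Lemma~\ref{lem:mini_absorption} with $t=1$ (so the $\underset{1-t}{\bbC}$ summand is omitted) gives $(L(\bbZ),\tau)*(\cB(\cK),\psi_\mu)\cong(T_\mu,\vphi_\mu)$; hence
\[
	(T_K,\vphi_K)*(\cB(\cK),\psi_\mu)\cong(T_K,\vphi_K)*(L(\bbZ),\tau)*(\cB(\cK),\psi_\mu)\cong(T_K,\vphi_K)*(T_\mu,\vphi_\mu)\cong(T_{\langle K,\mu\rangle},\vphi_{\langle K,\mu\rangle}),
\]
the last isomorphism again by the generating-set description, since a generating set of $K$ together with $\mu$ generates $\langle K,\mu\rangle$.

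For the inductive step ($d\geq1$) I would peel off the last Hilbert-space summand. Write the left-hand side as $(M,\vphi)=[(\cB(\cH_d),\psi_{\lambda_d})\oplus B]*(T_\lambda,\vphi_\lambda)$, with $B$ the direct sum of the remaining summands (masses rescaled as in \S\ref{subsec:status_quo}), and apply Lemma~\ref{lem:Dykema} with $(A,\phi)=(\cB(\cH_d),\psi_{\lambda_d})$, $\cB(\cH)=\bbC$ (minimal projection $1$), $(C,\nu)=(T_\lambda,\vphi_\lambda)$ and $p:=1_{\cB(\cH_d)}\in M^\vphi$. This gives
\[
	(pMp,\vphi^p)\cong(pNp,\vphi^p)*(\cB(\cH_d),\psi_{\lambda_d}),\qquad z(p\colon M)=z(p\colon N),
\]
where $N:=[\underset{t_d}{\bbC}\oplus B]*(T_\lambda,\vphi_\lambda)$ and $p$ is now the unit of the new scalar summand inside $N$. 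But $(N,\vphi)$ is exactly an instance of the generalized statement with one fewer Hilbert-space summand, so by the induction hypothesis $(N,\vphi)\cong(T_{H'},\vphi_{H'})$ state-preservingly, with $H':=\langle\lambda,\lambda_1,\dots,\lambda_{d-1}\rangle$; since this isomorphism carries $p$ into the factor $(T_{H'})^{\vphi_{H'}}$, we get $z(p\colon N)=1$ (hence $z(p\colon M)=1$) and, by Lemma~\ref{lem:antman1}, $(pNp,\vphi^p)\cong(T_{H'},\vphi_{H'})$. Feeding this into the displayed formula and using the auxiliary fact,
\[
	(pMp,\vphi^p)\cong(T_{H'},\vphi_{H'})*(\cB(\cH_d),\psi_{\lambda_d})\cong(T_{\langle H',\lambda_d\rangle},\vphi_{\langle H',\lambda_d\rangle})=(T_H,\vphi_H).
\]
Finally I would apply Lemma~\ref{lem:antman2}: $\vphi$ is almost periodic, $p\in M^\vphi$, $(pMp,\vphi^p)\cong(T_H,\vphi_H)$, and $z(p\colon M)=1$, so once $z(p\colon M^\vphi)=z(p\colon M)$ is known the lemma yields $(M,\vphi)\cong(T_H,\vphi_H)$, closing the induction.

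The step I expect to be the main obstacle is the verification that $z(p\colon M^\vphi)=z(p\colon M)\,(=1)$, as required by Lemma~\ref{lem:antman2}: Lemma~\ref{lem:Dykema} only delivers the central support of $p$ in $M$, and the equality of the two central supports genuinely fails for general almost periodic states (e.g.\ $(\cB(\cH),\psi_\lambda)$ with $p=e_{0,0}$). Here it should hold because $(T_\lambda)^{\vphi_\lambda}$ is a diffuse $\mathrm{II}_1$ factor that sits inside the (tracial) centralizer $M^\vphi$ as the centralizer of a free component, which forces $M^\vphi$ to be a factor---an argument in the spirit of Lemma~\ref{lem:centralizersupport}, but one that has to be carried out carefully in the present free-product setting. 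The remaining work is bookkeeping: ensuring every isomorphism above---especially the application of the induction hypothesis and the various compressions---is state-preserving, which is precisely what the cited lemmas are designed to guarantee.
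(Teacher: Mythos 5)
Your proof is correct and follows essentially the same strategy as the paper's: induct on $d$, use Lemma~\ref{lem:Dykema} to compress onto a corner where the new $\cB(\cH_d)$ summand appears as a free factor, identify the compression via Lemma~\ref{lem:antman1} and the $d=1$ absorption computation, and un-compress with Lemma~\ref{lem:antman2}. The one organizational difference is that you strengthen the induction hypothesis to allow extra $\bbC$ summands so that a single application of Lemma~\ref{lem:Dykema} closes the induction, whereas the paper keeps the statement as is and instead runs each inductive step through a chain of three intermediate algebras $N_1\subset N_2\subset M$ (bundling the first $d-1$ summands into one corner $(A,\psi)$ and applying Lemma~\ref{lem:Dykema} twice); your version is marginally cleaner, the paper's avoids restating the corollary. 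The obstacle you flag---that Lemma~\ref{lem:antman2} needs $z(p\colon M^\vphi)=z(p\colon M)$, which is not automatic for almost periodic states---is real, and your heuristic (a subfactor of $M^\vphi$ being a factor does not by itself force $M^\vphi$ to be one) would need more than the sketch you give; the paper dispatches it by citing \cite[Corollary 7.1]{Nel17}, which asserts directly that the centralizer of such a free product is a factor, so the gap closes by citation rather than by a new argument.
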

\begin{proof}
We proceed by induction on $d$. When $d=1$ (in which case $t_1=1$), we have by \cite[Theorem 4.8]{Shl97} that
	\[
		(L(\bbZ),\tau) * (\cB(\cH_1),\psi_{\lambda_1})\cong (T_{\lambda_1}, \vphi_{\lambda_1}).
	\]
So using free absorption we obtain
	\begin{align*}
		(T_\lambda,\vphi_\lambda) * (\cB(\cH_1),\psi_{\lambda_1}) &\cong (T_\lambda,\vphi_\lambda) * (L(\bbZ),\tau) * (\cB(\cH_1),\psi_{\lambda_1})\\
			&\cong (T_\lambda, \vphi_\lambda) * (T_{\lambda_1}, \vphi_{\lambda_1}) \cong (T_H, \lambda_H),
	\end{align*}
where $H=\< \lambda,\lambda_1\> < \bbR^+$.

Now, suppose the result holds for $d-1$. Let $t_1+\cdots+t_d=1$. Denote
	\[
		(A,\psi):=\underset{\frac{t_1}{t_1+\cdots +t_{d-1}}}{(\cB(\cH_1),\psi_{\lambda_1})}\oplus\cdots \oplus \underset{\frac{t_{d-1}}{t_1+\cdots+t_{d-1}}}{(\cB(\cH_{d-1}),\psi_{\lambda_{d-1}})}.
	\]
Consider the following intermediate algebras:
	\begin{align*}
		(N_1,\vphi_1)&:= (T_\lambda,\vphi_\lambda) * \left[\overset{p}{\underset{t_1+\cdots+t_{d-1}}{\bbC}}\oplus \overset{q}{\underset{t_d}{\bbC}}\right]\\
		(N_2,\vphi_2)&:=(T_\lambda,\vphi_\lambda) * \left[ \overset{p}{\underset{t_1+\cdots+t_{d-1}}{(A,\psi)}}\oplus \overset{q}{\underset{t_d}{\bbC}}\right]\\
		(M,\vphi) &:= (T_\lambda,\vphi_\lambda) * \left[ \overset{p}{\underset{t_1+\cdots+t_{d-1}}{(A,\psi)}}\oplus \overset{q}{\underset{t_d}{(\cB(\cH_d),\psi_{\lambda_d})}}\right].
	\end{align*}
Note that $p$ and $q$ are central projections in the right factors of each free product. Using free absorption, (the proof of) \cite[Lemma 1.6]{MR1201693}, and free absorption again we obtain $(N_1,\vphi_1)\cong (T_\lambda, \vphi_\lambda)$. Then by Lemma~\ref{lem:Dykema} we have that
	\[
		(pN_2p,\vphi_2^p) \cong (p N_1 p,\vphi_1^p) * (A,\psi).
	\]
By Lemma~\ref{lem:antman1}, $(p N_1 p,\vphi_1^p) \cong (T_\lambda,\vphi_\lambda)$. So by the induction hypothesis, $(p N_2p ,\vphi_2^p) \cong (T_K,\vphi_K)$ where $K=\<\lambda,\lambda_1,\ldots, \lambda_{d-1}\> \leq H$. From \cite[Corollary 7.1]{Nel17} we see that $N_2^{\vphi_2}$ is a factor, and so $(N_2,\vphi_2)\cong (T_K,\vphi_K)$ by Lemma~\ref{lem:antman2}.

Appealing to Lemma~\ref{lem:Dykema} again yields
	\begin{align*}
		(qMq,\vphi^q) &\cong (q N_2q, \vphi_2^q) * (\cB(\cH), \psi_{\lambda_d})\\
			&\cong (T_K,\vphi_K) * (\cB(\cH), \psi_{\lambda_d}) \cong (T_H,\vphi_H),			
	\end{align*}
where we have used Lemma~\ref{lem:antman1} and the same argument as in the base case (along with $H=\<K,\lambda\>$). Once more, \cite[Corollary 7.1]{Nel17} implies that $M^\vphi$ is a factor, and so Lemma~\ref{lem:antman2} concludes the proof.
\end{proof}


\section{Building the Graph}\label{sec:building_the_graph}

In this section we prove our main result. Let $\Gamma=(V, E)$ and $\mu$ be as in Section~\ref{sec:adding_weighting_to_edges}. Our strategy is to build up to an isomorphism for $\cM(\Gamma,\mu)$ by first establishing isomorphisms for a certain special subgraph $\Gamma_0$ and then constructing $\Gamma$ by successively adding edges (and sometimes vertices) and controlling the isomorphism at each step of this construction. What makes $\Gamma_0$ special is that $\cM(\Gamma_0,\mu)$ is a free Araki--Woods factor (up to direct sum with a finite dimensional abelian algebra). The robust absorption properties of the free Araki--Woods factors that we have seen will then ensure that the intermediate graphs between $\Gamma_0$ and $\Gamma$ will also have graph von Neumann algebras that are free Araki--Woods factors (up to direct sum with a finite dimensional abelian algebra).

To begin, we first determine $\Gamma_0$ as follows. Since $H$ is non-trivial, there exists $\sigma_0\in \Lambda_\Gamma$ such that $\mu(\sigma)\neq 1$. By taking such a loop $\sigma_0=e_1\cdots e_n$ with minimal length, we may further assume that that $\sigma_0$ visits exactly $n-1$ vertices. Furthermore, by considering the reverse loop if necessary, we may assume $\mu(\sigma_0)>1$. Finally, by the following proposition, up to a cyclic relabeling of the edges we may assume that $\mu(e_1)\cdots \mu(e_k)\geq 1$ for each $k=1,\ldots, n$ (which will be necessary in Lemma~\ref{lem:base_case3}). Then $\Gamma_0=(V_0,E_0)$ is defined as the subgraph consisting of $\sigma_0$ and $\sigma_0^{\op}$.

\begin{prop}
Let $\mu_{1},\ldots, \mu_{n}$ be positive real numbers satisfying $\mu_{1}\cdots\mu_{n} \geq 1$.  Then there exists a cyclic permutation $\sigma \in S_{n}$ satisfying $\mu_{\sigma(1)}\cdots\mu_{\sigma(k)} \geq 1$ for each $k=1,\ldots, n$.
\end{prop}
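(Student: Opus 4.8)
The plan is to reduce to the standard ``cycle lemma'' by working with partial products. First I would record the partial products $P_{k}:=\mu_{1}\cdots\mu_{k}$ for $k=0,1,\ldots,n$, with the convention $P_{0}=1$, and observe that the hypothesis $\mu_{1}\cdots\mu_{n}\geq 1$ says precisely $P_{n}\geq 1=P_{0}$. (Equivalently one may pass to logarithms $a_{i}:=\ln\mu_{i}$ and argue additively with the partial sums $a_{1}+\cdots+a_{k}$; I will phrase everything multiplicatively.)

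Next I would choose the pivot. Let $m\in\{0,1,\ldots,n-1\}$ be an index at which $P_{m}$ attains its minimum over that range. Since $0$ lies in $\{0,\ldots,n-1\}$ we get $P_{m}\leq P_{0}=1\leq P_{n}$, so in fact $P_{m}=\min_{0\leq j\leq n}P_{j}$. Define the cyclic permutation $\sigma\in S_{n}$ by letting $\sigma(j)$ be the representative in $\{1,\ldots,n\}$ of $m+j$ modulo $n$; this is the $m$-fold cyclic shift, hence a cyclic permutation as required.

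It then remains to verify the conclusion by a short two-case computation for each $k\in\{1,\ldots,n\}$. Writing out $\mu_{\sigma(1)}\cdots\mu_{\sigma(k)}$ and cancelling telescoping factors: if $m+k\leq n$ this product equals $P_{m+k}/P_{m}\geq 1$ by minimality of $P_{m}$; if $m+k>n$ it equals $(P_{n}/P_{m})\cdot P_{m+k-n}\geq (P_{n}/P_{m})\cdot P_{m}=P_{n}\geq 1$, again using $P_{m+k-n}\geq P_{m}$ together with $P_{n}\geq 1$. Either way $\mu_{\sigma(1)}\cdots\mu_{\sigma(k)}\geq 1$, which is the claim. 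There is no genuine obstacle here — this is a classical combinatorial fact — so the only point requiring care is bookkeeping: writing the cyclic shift so that the partial products telescope as stated and correctly handling the wrap-around case $m+k>n$ (in particular the boundary $k=n$, where the product is exactly $P_{n}$). Ties in the choice of $m$ are harmless, since any minimizing index works.
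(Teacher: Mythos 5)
Your proof is correct and follows essentially the same approach as the paper's: both arguments rotate the sequence so that it begins immediately after the index where the (cyclic) partial product is minimized, the key point being that every subsequent partial product is then at least the minimum, hence at least $1$. Your version is phrased as a direct telescoping computation with $P_{m+k}/P_m$ and the wrap-around case $(P_n/P_m)P_{m+k-n}$, whereas the paper argues by contradiction with the minimality of $\alpha$ over all cyclic windows, but the underlying idea is identical.
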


\begin{proof}
Define $\alpha = \min\{ \mu_{\sigma(1)}\cdots \mu_{\sigma(j)} \, | 1 \leq j \leq n , \, \sigma \in S_{n} \text{ cyclic }\}$.  If $\alpha \geq 1$, then clearly any cyclic permutation will satisfy $\mu_{\sigma(1)}\cdots\mu_{\sigma(k)} \geq 1$ for all $k \in \{1, \dots, n\}$.  Therefore, we suppose $\alpha < 1$.

Choose $m \in \{1, \cdots, n\}$ and cyclic $\sigma \in S_{n}$ satisfying $\mu_{\sigma(m)}\cdots\mu_{\sigma(n)} = \alpha$.  We claim that $\mu_{\sigma(1)}\cdots\mu_{\sigma(k)} \geq 1$ for all $k \in \{1, \dots, n\}$.  Indeed, if $k < m$, and $\mu_{\sigma(1)}\cdots\mu_{\sigma(k)} < 1$, then $\mu_{\sigma(m)}\cdots\mu_{\sigma(n)}\mu_{\sigma(1)}\cdots\mu_{\sigma(k)} < \alpha$, contradicting the minimality of $\alpha$.  Furthermore, note that if $k \geq m$, then $\mu_{\sigma(k+1)}\cdots \mu_{\sigma(n)} \leq 1$, for otherwise we would have $\mu_{\sigma(m)}\cdots \mu_{\sigma(k)} <  \mu_{\sigma(m)}\cdots \mu_{\sigma(n)} = \alpha$, contradicting the minimality of $\alpha$.  Since  $\mu_{1}\cdots\mu_{n} \geq 1$, it follows that $\mu_{\sigma(1)}\cdots\mu_{\sigma(k)} \geq 1$, as desired.
\end{proof}

Now, we define a positive linear functional $\vphi$ on $\cM(\Gamma,\mu)$ via the following procedure. Let $\sigma_0=e_1\cdots e_n$ be as above. As in Section~\ref{sec:adding_weighting_to_edges},  let $\Gamma_{\Tr}$ be a maximal subgraph of $\Gamma$ containing $e_1,\ldots, e_{n-1}$ and satisfying
	\[
		\mu(\sigma)=1\qquad \forall \sigma\in \Lambda_{\Gamma_{\Tr}} 
	\]
Then define $\vphi$ on $\cM(\Gamma,\mu)$ as in Section~\ref{sec:adding_weighting_to_edges} by setting $*:=s(\sigma_0)$; that is, $\vphi(p_{s(\sigma_0)}):=1$ and for $v\in V\setminus \{s(\sigma_0)\}$ set $\vphi(v):= \mu(\sigma)$ where $\sigma\in \Pi_{\Gamma_{\Tr}}$ satisfies $s(\sigma)=s(\sigma_0)$ and $t(\sigma)=v$. Extend $\vphi$ to $\cM(\Gamma,\mu)$ by letting $\vphi=\vphi\circ\bbE$. Note that since $e_1\cdots e_{n-1}\in \Pi_{\Gamma_{\Tr}}$, Proposition~\ref{prop:edges_elements_are_eigenops} implies that $Y_{e_n}$ is an eigenoperator with respect to $\varphi$ with eigenvalue $\mu(e_1)\cdots \mu(e_n)>1$.

We have carefully chosen $\Gamma_{\Tr}$ above for the purposes of our analysis of $(\cM(\Gamma_0,\mu),\vphi)$. However, as we add edges to construct $\Gamma$ from $\Gamma_0$, we might have to alter the choice of $\Gamma_{\Tr}$.  The following proposition allows us to do so.

\begin{prop}\label{prop:changestate}
Let $\Gamma_{\Tr_{1}}$ and $\Gamma_{\Tr_{2}}$ both be maximal among subgraphs $\Xi$ of $\Gamma$ satsifying $\mu(\sigma)=1$ for all $\sigma\in \Lambda_{\Xi}$.  Let $\vphi_{1}$ and $\vphi_{2}$ be the induced positive linear functionals (see Section~\ref{sec:adding_weighting_to_edges}), and assume $H:=H(\Gamma,\mu)$ is non-trivial.  Set $\displaystyle\gamma_{v} := \sum_{s(f) = v} \mu(f)$.  If
	\[
		(\cM(\Gamma, \mu), \vphi_{1}) = \overset{\bigvee_{e \in E} u_{e}u_{e}^{*}}{(T_{H}, \vphi_{H})} \oplus \bigoplus_{v\colon\gamma_{v} < 1} \underset{\vphi_{1}(p_{v})(1-\gamma_{v})}{\C},
	\]
then
	\[
		(\cM(\Gamma, \mu), \vphi_{2}) = \overset{\bigvee_{e \in E} u_{e}u_{e}^{*}}{(T_{H}, \vphi_{H})} \oplus \bigoplus_{v\colon\gamma_{v} < 1} \underset{\vphi_{2}(p_{v})(1-\gamma_{v})}{\C}.
	\]
\end{prop}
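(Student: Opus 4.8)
The plan is to peel off the (automatic) atomic part and then reduce the diffuse part to a single corner on which $\vphi_1$ and $\vphi_2$ literally agree, propagating the isomorphism with the centralizer--central-support machinery. If $|V|=1$ then no self-loop of weight $\ne1$ can lie in a tracial subgraph, so there is a unique admissible $\Gamma_{\Tr}$, hence $\vphi_1=\vphi_2$ and there is nothing to prove; so assume $|V|\ge 2$, in which case non-triviality of $H$ forces $\Gamma$ to have at least two distinct pairs of edges and Lemma~\ref{lem:centralizersupport} is available. Write $z:=\bigvee_{e\in E}u_eu_e^*$, which is central in $\cM(\Gamma,\mu)$ independently of the state. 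Since $1-z$ is orthogonal to every $Y_e$, $(1-z)\cM(\Gamma,\mu)$ is purely atomic, and the computation in Remark~\ref{rem:atomic}, carried out for $\vphi_2$ in place of the canonical functional, identifies it with $\bigoplus_{v\colon\gamma_v<1}\underset{\vphi_2(p_v)(1-\gamma_v)}{\C}$. Thus the whole content of the proposition is the statement $(z\cM(\Gamma,\mu),\vphi_2^z)\cong(T_H,\vphi_H)$.

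The key observation I would use is that $\vphi_1$ and $\vphi_2$ differ only by a weight in the abelian algebra $A=\ell^\infty(V)$. Writing $\vphi_i=\phi_i\circ\bbE$ with $\phi_i$ the induced functional on $A$ (all its values are positive, since $\Gamma_{\Tr_i}$ is connected with vertex set $V$, so $\phi_i$ is faithful), we have $\phi_2=c\,\phi_1(h\,\cdot\,)$ for some $c>0$ and the positive invertible $h=\sum_v\frac{\phi_2(p_v)}{\phi_1(p_v)}p_v\in A$; because $\bbE$ is an $A$-bimodule map this upgrades to $\vphi_2=c\,\vphi_1(h\,\cdot\,)$ on all of $\cM(\Gamma,\mu)$. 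Now fix any $e\in E$ and set $p:=u_eu_e^*$. Then $p=p_{s(e)}u_eu_e^*\le p_{s(e)}$, and $p\le z$; moreover $Y_e$, hence $u_e$, is an eigenoperator of both $\Delta_{\vphi_1}$ and $\Delta_{\vphi_2}$ by the corollary to Proposition~\ref{prop:edges_elements_are_eigenops}, so $p$ lies in $\cM(\Gamma,\mu)^{\vphi_1}\cap\cM(\Gamma,\mu)^{\vphi_2}$. The crucial small computation is $hp=h_{s(e)}p$ with $h_{s(e)}=\phi_2(p_{s(e)})/\phi_1(p_{s(e)})>0$, whence $php=h_{s(e)}p$ and therefore
\[
	\vphi_2^p(x)=\frac{\vphi_2(pxp)}{\vphi_2(p)}=\frac{\vphi_1(hpxp)}{\vphi_1(hp)}=\frac{h_{s(e)}\vphi_1(pxp)}{h_{s(e)}\vphi_1(p)}=\vphi_1^p(x),
\]
i.e. the two states \emph{agree} on the corner $p\cM(\Gamma,\mu)p$ (the perturbing weight is a scalar there).

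With this in hand I would finish as follows. Since $p$ is a nonzero projection in $(z\cM(\Gamma,\mu))^{\vphi_1^z}$ and $(z\cM(\Gamma,\mu),\vphi_1^z)\cong(T_H,\vphi_H)$ by hypothesis, Lemma~\ref{lem:antman1} gives $(p\cM(\Gamma,\mu)p,\vphi_1^p)\cong(T_H,\vphi_H)$, and by the previous paragraph $(p\cM(\Gamma,\mu)p,\vphi_2^p)\cong(T_H,\vphi_H)$ as well. Now apply Lemma~\ref{lem:antman2} to $M=z\cM(\Gamma,\mu)$ with the faithful normal state $\vphi_2^z$ and the projection $p\in M^{\vphi_2^z}$: $\vphi_2^z$ is almost periodic because $\cM(\Gamma,\mu)$ is generated by $A\subseteq\cM(\Gamma,\mu)^{\vphi_2}$ together with the eigenoperators $Y_e$ and $z$ lies in the centralizer; the corner hypothesis was just verified; and the central-support hypothesis $z(p\colon M^{\vphi_2^z})=z(p\colon M)=z$ follows from Lemma~\ref{lem:centralizersupport} applied to $\vphi_2$ together with Remark~\ref{rem:central_support_same_in_centralizer} (intersecting with the central $z$). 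Since here $z(p\colon M)=z=1_M$, the conclusion of Lemma~\ref{lem:antman2} reads $(z\cM(\Gamma,\mu),\vphi_2^z)\cong(T_H,\vphi_H)$, which combined with the atomic part above is exactly the asserted description of $(\cM(\Gamma,\mu),\vphi_2)$.

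The only points that will need care --- none of them deep --- are: verifying that Lemma~\ref{lem:centralizersupport} applies verbatim to the state $\vphi_2$ rather than only to the specific functional built in Section~\ref{sec:building_the_graph} (its proof uses only that $u_f$ is in the centralizer for $f\in E(\Gamma_{\Tr_2})$ and that every vertex meets $\Gamma_{\Tr_2}$, both automatic by maximality); and confirming that $\vphi_2$ is almost periodic. I expect these bookkeeping checks to be the main ``obstacle,'' the substantive step being the identity $\vphi_2^p=\vphi_1^p$ on the corner. It is worth noting that the argument never uses that the ratios $\phi_2(p_v)/\phi_1(p_v)$ lie in $H$ --- only that $h\in A$, which is automatic.
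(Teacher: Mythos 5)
Your proposal is correct and follows essentially the same route as the paper: the paper likewise observes that the compressed state on a corner under a single vertex projection (there $p_v$ with $\gamma_v\ge 1$, phrased via the joint law of loops based at $v$ being independent of the $\bbE$-preserving functional; in your version $p=u_eu_e^*$ and the Radon--Nikodym weight $h\in A$ becoming a scalar on the corner) does not depend on the choice of $\Gamma_{\Tr}$, and then reassembles via Lemma~\ref{lem:antman1}, Lemma~\ref{lem:antman2}, Lemma~\ref{lem:centralizersupport}, and Remark~\ref{rem:atomic}. Your extra bookkeeping (almost periodicity of $\vphi_2$, applicability of Lemma~\ref{lem:centralizersupport} to $\vphi_2$) is correctly identified and handled.
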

\begin{proof}
Choose $v \in V$ satisfying $\gamma_{v} \geq 1$ (so that $p_{v} \leq \bigvee_{e \in E} u_{e}u_{e}^{*}$).  Lemma~\ref{lem:antman1} implies that $(p_v\cM(\Gamma, \mu)p_v, \vphi_{1}^{p_v}) \cong (T_{H}, \vphi_{H})$.  Let $\phi$ be any positive linear functional on $\cM(\Gamma, \mu)$ preserving $\bbE$.  It is straightforward to see that the joint law of $\{Y_{e_{1}}\cdots Y_{e_{n}} : s(e_{1}) = t(e_{n}) = v\}$ in $(p_{v}\cM(\Gamma, \mu)p_{v}, \phi^{p_v})$ depends only on $\mu$ and not on $\phi$.  It follows that $(p_v\cM(\Gamma, \mu)p_v, \vphi_{2}^{p_v}) \cong (T_{H}, \vphi_{H})$.  The rest follows from Lemma~\ref{lem:antman2} as well as Lemma~\ref{lem:centralizersupport} and Remark~\ref{rem:atomic}.
\end{proof}

In the following Subsection we consider the various possibilities (and some additional relevant cases) for $\Gamma_0$. We show that in each case $\cM(\Gamma_0,\mu)$ is a free Araki--Woods factor (up to direct sum with a finite dimensional abelian algebra). Later, in Subsection~\ref{subsec:under_construction}, we demonstrate how to build up from this subgraph while controlling the (state-preserving) isomorphism class.


\subsection{Establishing a foundation}

$\hfill$\textit{The wheel turns and a black hole is born.}

{\color{white}a}

We consider $\Gamma_0= (V_0, E_0)$ as a subgraph of $\Gamma$ and therefore $\cM(\Gamma_0,\mu)$ as a subalgebra of $\cM(\Gamma,\mu)$, which we endow with the positive linear function $\vphi$ defined as above. The following lemma only corresponds to a possible case for $\Gamma_0$ when $n=1$, but we are able to prove a more general version here with no additional effort.

\begin{lem}\label{lem:base_case0}
Let $\Gamma_0=(V_0,E_0)$, $\mu$, and $\vphi$ satisfy
	\[
		\begin{tikzpicture}
		\node[left] at (-1,0) {$\Gamma_0$};
		
		\node[below right] at (0,0) {\scriptsize$0$};
		\draw[fill=black] (0,0) circle (.05);
		
		\node[above left] at (-0.65,0.65) {$e_1$};
		\node[above left] at (0,0) {$e_1^{\op}$};
		
		\node[above right] at (0.65,0.65) {$e_2$};
		\node[above right] at (0,0) {$e_2^{\op}$};
		
		\node[below left] at (-0.65,-0.65) {$e_n$};
		\node[below left] at (0,-0.1) {$e_n^{\op}$};
		
		\draw (0,0) .. controls (-2,0) and (0,2) .. (0,0);
		\draw[->] (-0.76,0.74) -- (-0.74,0.76);
		\draw (0,0) .. controls (-1.75,0) and (0,1.75) .. (0,0);
		\draw[->] (-0.64,0.66) -- (-0.66,0.64); 
		\draw (0,0) .. controls (2,0) and (0,2) .. (0,0);
		\draw[->]  (0.74,0.76) -- (0.76,0.74);
		\draw (0,0) .. controls (1.75,0) and (0,1.75) .. (0,0);
		\draw[->] (0.66,0.64) -- (0.64,0.66);
		\draw (0,0) .. controls (-2,0) and (0,-2) .. (0,0);
		\draw[->]  (-0.74,-0.76) -- (-0.76,-0.74);
		\draw (0,0) .. controls (-1.75,0) and (0,-1.75) .. (0,0);
		\draw[->] (-0.66,-0.64) -- (-0.64,-0.66);

		\draw[dashed] (0,-0.6) arc (-90:0:0.6);

		\node[right] at (2,0.5) {$\bullet$ $V_0=\{0\}$ with $\vphi(p_0)=1$};
		
		\node[right] at (2,-0.5) {$\bullet$ $E_0=\{e_1,e_1^{\op},\ldots, e_n,e_n^{\op}\}$ with $s(e_i)=t(e_i)=0$};
		
		\end{tikzpicture}
	\]
Let $H=\< \mu(e_i)\colon i=1,\ldots, n\> < \R^+$. Then
	\[
		(\cM(\Gamma_0,\mu),\vphi) \cong (T_H, \vphi_H).
	\]
\end{lem}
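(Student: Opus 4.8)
The plan is to exploit that $\Gamma_0$ has a single vertex, so that the coefficient algebra $A=\bbC p_0$ is trivial, the conditional expectation $\bbE$ is just $\vphi$, and amalgamation over $A$ becomes ordinary $\vphi$-freeness; moreover the graph Fock space $\cF(\Gamma_0)$ is the full Fock space over $\bbC[E_0]$ with $\vphi$ the (faithful) vacuum state. First I would apply the Proposition of Section~\ref{sec:adding_weighting_to_edges} stating that the C*-algebras $\mathrm{C}^*(A,Y_{e_i},Y_{e_i^{\op}})$, $i=1,\dots,n$, are free with amalgamation over $A$ under $\bbE$: passing to the GNS representation these become $\vphi$-free von Neumann subalgebras, and since $Y_{e_i^{\op}}=\mu(e_i)^{-1/2}Y_{e_i}^*$ each of them is $W^*(Y_{e_i})$. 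Hence
	\[
		(\cM(\Gamma_0,\mu),\vphi)\cong \bigast_{i=1}^n \left(W^*(Y_{e_i}),\vphi\right).
	\]

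Second, I would identify each free component with a two-variable free Araki--Woods factor. Since $e_i$ and $e_i^{\op}$ are orthonormal vectors in $\bbC[E_0]$ and $Y_{e_i}=\ell(e_i)+\sqrt{\mu(e_i)}\,\ell(e_i^{\op})^*$, the operator $Y_{e_i}$ is a positive scalar multiple of a generalized circular element of eigenvalue $\mu(e_i)$; in particular its $*$-distribution with respect to $\vphi$ coincides with that of $y_{\mu(e_i)}$ with respect to $\vphi_{\mu(e_i)}$ (with the convention $y_\lambda:=y_{\lambda^{-1}}^*$ for $\lambda>1$, and $y_1$ circular). As both $\vphi$ and $\vphi_{\mu(e_i)}$ are faithful, comparing GNS representations gives a state-preserving isomorphism $(W^*(Y_{e_i}),\vphi)\cong(T_{\mu(e_i)},\vphi_{\mu(e_i)})$, so
	\[
		(\cM(\Gamma_0,\mu),\vphi)\cong \bigast_{i=1}^n \left(T_{\mu(e_i)},\vphi_{\mu(e_i)}\right).
	\]

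Finally, I would recognize this free product as $(T_H,\vphi_H)$; here we assume $H\neq\{1\}$, which holds whenever the lemma is applied (the remaining case $n=1$, $\mu(e_1)=1$ gives $\cM(\Gamma_0,\mu)=L(\bbF_2)=T_1$ at once). The indices with $\mu(e_i)=1$ each contribute a free copy of $(T_1,\vphi_1)\cong(L(\bbF_2),\tau)$, which is absorbed since $1\in H$ and $(T_H,\vphi_H)*(T_\lambda,\vphi_\lambda)\cong(T_H,\vphi_H)$ for $\lambda\in H$ by \cite[Theorem~6.4]{Shl97}. For the remaining indices I would replace $\mu(e_i)$ by $\min\{\mu(e_i),\mu(e_i)^{-1}\}\in(0,1)$, which does not change $(T_{\mu(e_i)},\vphi_{\mu(e_i)})$, and collapse repeated values using $(T_\lambda,\vphi_\lambda)*(T_\lambda,\vphi_\lambda)\cong(T_\lambda,\vphi_\lambda)$ — which follows from \cite[Theorem~6.4]{Shl97} applied first with the one-element generating set $\{\lambda\}$ of $\<\lambda\>$ and then with $\lambda\in\<\lambda\>$. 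The resulting set $S$ of distinct values generates $H$, so $\bigast_{\lambda\in S}(T_\lambda,\vphi_\lambda)\cong(T_H,\vphi_H)$ by \cite[Theorem~6.4]{Shl97}. Combining the three steps gives the statement. I do not anticipate a genuine obstacle here: this is a base case, and the only care needed is to keep the functionals matched throughout and to handle the cases $\mu(e_i)>1$, $\mu(e_i)=1$, $\mu(e_i)<1$ and repeated weights, all of which the free-absorption results of Shlyakhtenko settle directly.
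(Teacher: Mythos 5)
Your proof is correct and follows essentially the same route as the paper's: the paper simply observes that $Y_{e_1},\dots,Y_{e_n}$ are freely independent generalized circular elements with eigenvalues $\mu(e_1),\dots,\mu(e_n)$ and concludes $(\cM(\Gamma_0,\mu),\vphi)\cong (T_{\mu(e_1)},\vphi_{\mu(e_1)})*\cdots*(T_{\mu(e_n)},\vphi_{\mu(e_n)})\cong (T_H,\vphi_H)$ via \cite[Theorem 6.4]{Shl97}. Your extra care with repeated and reciprocal weights, and your observation that the degenerate case $H=\{1\}$ with $n\geq 2$ must be excluded (as it is in every application of the lemma), only make explicit what the paper's one-line proof leaves implicit.
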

\begin{proof}
With respect to $\vphi$, $Y_{e_1},\ldots, Y_{e_n}$ are freely independent generalized circular elements with eigenvalues $\mu(e_1),\ldots, \mu(e_n)$, respectively. Hence
	\[
		(\cM(\Gamma_0,\mu),\vphi) \cong (T_{\mu(e_1)},\vphi_{\mu(e_1)}) *\cdots * (T_{\mu(e_n)}, \vphi_{\mu(e_n)}) \cong (T_H,\vphi_H). \qedhere
	\]
\end{proof}

\begin{lem}\label{lem:base_case1}
Let $\Gamma_0=(V_0,E_0)$, $\mu$, and $\vphi$ satisfy
	\[
		\begin{tikzpicture}
		\node[left] at (-1,1) {$\Gamma_0$};
		
		\node[left] at (-1,0) {\scriptsize$0$}; 
		\draw[fill=black] (-1,0) circle (0.05);
		
		\node[right] at (1,0) {\scriptsize$1$}; 
		\draw[fill=black] (1,0) circle (0.05);
		
		\node[below] at (0, 0.62) {$e_1^{\op}$}; 
		\draw (-1,0) arc (150:30:1.1547);
		\draw[->] (1,0) arc (30:95:1.1547);

		\node[above] at (0,1) {$e_1$}; 
		\draw (-1,0) arc (180:0:1);
		\draw[->] (-1,0) arc (180:85:1);
		
		\node[above] at (0,-0.62) {$e_2$}; 
		\draw (1,0) arc (-30:-150:1.1547);
		\draw[->] (1,0) arc (-30:-95:1.1547);

		\node[below] at (0,-1) {$e_2^{\op}$}; 
		\draw (-1,0) arc (-180:0:1);
		\draw[->] (-1,0) arc (-180:-85:1);

		\node[right] at (2,1) {$\bullet$ $V_0=\{0,1\}$ with $\vphi(p_0)=1$ and $\vphi(p_1)=\mu(e_1)$};
		
		\node[right] at (2,0) {$\bullet$ $E_0=\{e_1,e_1^{\op}, e_2, e_2^{\op}\}$,  with $s(e_1)=t(e_2)=0$ and $t(e_1)=s(e_2)=1$};
		
		\node[right] at (2,-1) {$\bullet$ If $\mu_i=\mu(e_i)$, $i=1,2$, then $\mu_1>1$};
	
		\end{tikzpicture}
	\]
Let $\lambda=\frac{1}{\mu_1\mu_2}$ , and $z = \bigvee_{e \in E(\Gamma_{0})}u_{e}^{*}u_{e}$. Then
	\[
		(\cM(\Gamma_0,\mu),\vphi) \cong \begin{cases} (T_\lambda,\vphi_\lambda) &\text{if }\mu_1^{-1}+\mu_2\geq 1\\
														(T_\lambda,\vphi_\lambda)\oplus \underset{\mu_1[1-\mu_1^{-1}-\mu_2]}{\overset{r_1}{\bbC}} &\text{otherwise}\end{cases},
	\]
where $r_1 = 1 - z \leq p_{1}$.
\end{lem}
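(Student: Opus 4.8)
\emph{Overview.} The plan is to realise $(\cM(\Gamma_0,\mu),\vphi)$ as an amalgamated free product over $A=\bbC p_0\oplus\bbC p_1$, to identify the compression $p_0\cM(\Gamma_0,\mu)p_0$ with $(T_\lambda,\vphi_\lambda)$ using the free‑product machinery of Section~\ref{sec:prelim}, and then to transfer this to all of $\cM(\Gamma_0,\mu)$ via Lemma~\ref{lem:antman2}, reading off the atomic summand from Remark~\ref{rem:atomic}.

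\emph{Step 1 (amalgamated decomposition).} By the amalgamated freeness of the edge C*‑algebras (the proposition of Section~\ref{sec:adding_weighting_to_edges} preceding Definition~\ref{notation:M}), $\cM(\Gamma_0,\mu)=W^*(A,Y_{e_1})\Asterisk_A W^*(A,Y_{e_2})$. Since $e_1\in E(\Gamma_{\Tr})$, Proposition~\ref{prop:edges_elements_are_eigenops} gives $Y_{e_1}\in\cM(\Gamma_0,\mu)^{\vphi}$, so $A_1:=W^*(A,Y_{e_1})=\cM(\Gamma_{\Tr},\mu)$ is tracial; writing $Y_{e_1}=u_{e_1}|Y_{e_1}|$ one has $u_{e_1}u_{e_1}^*=p_0$ (as $\mu_1\geq1$) and $u_{e_1}\in\cM(\Gamma_0,\mu)^{\vphi}$, hence $\vphi(u_{e_1}^*u_{e_1})=\vphi(p_0)=1$, and the analysis of $\cM(\Gamma_{\Tr},\mu)$ in Section~\ref{sec:adding_weighting_to_edges} (via \cite{MR3266249,MR3110503}) identifies $(A_1,\vphi)\cong\overset{p_0,q_1}{\underset{1,1}{M_2(L(\Z))}}\oplus\underset{\mu_1-1}{\overset{p_1-q_1}{\bbC}}$ with $q_1:=u_{e_1}^*u_{e_1}\leq p_1$. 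For $A_2:=W^*(A,Y_{e_2})$, Proposition~\ref{prop:edges_elements_are_eigenops} shows $Y_{e_2}$ is an eigenoperator with eigenvalue $\mu_1\mu_2=\lambda^{-1}$; from the distributional formulas recorded just before Proposition~\ref{prop:edges_elements_are_eigenops} the corner $p_0A_2p_0=W^*(Y_{e_2}^*Y_{e_2})$ is a copy of $(L(\Z),\tau)$ of mass $1$ when $\mu_2\leq1$ and of $\underset{\mu_2^{-1}}{(L(\Z),\tau)}\oplus\underset{1-\mu_2^{-1}}{\bbC}$ when $\mu_2>1$, while $p_0A_1p_0=W^*(Y_{e_1}Y_{e_1}^*)\cong(L(\Z),\tau)$ of mass $1$.

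\emph{Step 2 (the corner is $T_\lambda$).} Compress the amalgamated free product by the minimal projection $p_0\in A$. Dykema's free‑product compression technique (the mechanism behind Lemma~\ref{lem:Dykema}; see \cite{MR1201693,Dyk97}, with Proposition~\ref{prop:amalgamate} available to trade amalgamation over $A$ for free products over $\bbC$) expresses $(p_0\cM(\Gamma_0,\mu)p_0,\vphi^{p_0})$ as a free product over $\bbC$ assembled from the corners $p_0A_1p_0$, $p_0A_2p_0$, a loop generator manufactured from $u_{e_1},u_{e_2}$ (after a further polar decomposition), and the residual atoms. Because this loop generator carries the combined modular eigenvalue $\mu_1\mu_2=\lambda^{-1}$, after collecting the diffuse tracial pieces the free product takes the form $(L(\Z),\tau)*(\cB(\ell^2(\N_0)),\psi_{\lambda})*(C,\tau_C)$ with $(C,\tau_C)$ a tracial direct sum of hyperfinite and interpolated free group algebras (and $\psi_\lambda$ taken, per the convention $T_\lambda=T_{\lambda^{-1}}$, relative to whichever of $\mu_1\mu_2,(\mu_1\mu_2)^{-1}$ lies in $(0,1)$). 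By \cite[Theorem~4.8]{Shl97} (or Lemma~\ref{lem:mini_absorption}) the first two factors give $(T_\lambda,\vphi_\lambda)$, and free absorption (\cite[Corollary~5.5]{Shl97} and the proposition extending it to such tracial direct sums) absorbs $(C,\tau_C)$; hence $(p_0\cM(\Gamma_0,\mu)p_0,\vphi^{p_0})\cong(T_\lambda,\vphi_\lambda)=(T_H,\vphi_H)$ with $H=\langle\mu_1\mu_2\rangle$. (If $\mu_1\mu_2=1$ then $\vphi$ is a trace, $\cM(\Gamma_0,\mu)=\cM(\Gamma_0,\overline{\mu})$ is a tracial graph algebra, and the statement—with $T_1=L(\bbF_2)$—follows from \cite{MR3110503}.)

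\emph{Step 3 (globalisation and the atomic part).} Every $Y_e$ is an eigenoperator, so $\vphi$ is almost periodic. Since $\Gamma_0$ has exactly two edge pairs, Lemma~\ref{lem:centralizersupport} gives $z(u_{e_1}u_{e_1}^*:\cM(\Gamma_0,\mu)^{\vphi})=\bigvee_{e\in E_0}u_eu_e^*=:z$, i.e.\ $z(p_0:\cM(\Gamma_0,\mu)^{\vphi})=z$ because $p_0=u_{e_1}u_{e_1}^*$, and by Remark~\ref{rem:central_support_same_in_centralizer} also $z(p_0:\cM(\Gamma_0,\mu))=z$. Assuming $\mu_1\mu_2\neq1$ (so $H$ is non‑trivial and $T_H=T_\lambda$), Lemma~\ref{lem:antman2} applied to $(\cM(\Gamma_0,\mu),\vphi,p_0)$ yields $(z\cM(\Gamma_0,\mu),\vphi^z)\cong(T_\lambda,\vphi_\lambda)$. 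Finally Remark~\ref{rem:atomic} gives $(1-z)\cM(\Gamma_0,\mu)=\bigoplus_{v\in V_0}\overset{r_v}{\bbC}$ with $\vphi(r_v)=\vphi(p_v)\big[1-\sum_{s(e)=v}\mu(e)\big]$: at $v=0$ the bracket is $1-\mu_1-\mu_2^{-1}<0$ (since $\mu_1>1$), so $r_0=0$; at $v=1$ it is $1-\mu_1^{-1}-\mu_2$, producing the summand $\underset{\mu_1(1-\mu_1^{-1}-\mu_2)}{\overset{r_1}{\bbC}}$ with $r_1=1-z\leq p_1$ precisely when $\mu_1^{-1}+\mu_2<1$, and $z=1$ otherwise. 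Combining the diffuse and atomic parts gives the asserted dichotomy.

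\emph{Main obstacle.} The crux is Step~2: in each branch $\mu_2\lessgtr1$ one must track correctly which projections generate which corners of the amalgamated free product, their $\vphi$‑masses, and the modular eigenvalues of the loop generator, so that Dykema's compression formula outputs a free product over $\bbC$ matching the hypotheses of Lemma~\ref{lem:mini_absorption} (in particular that the single surviving eigenvalue is $\lambda=(\mu_1\mu_2)^{-1}$, not $\mu_2^{\pm1}$). Verifying that the atomic summand produced has $\vphi$‑mass exactly $\mu_1(1-\mu_1^{-1}-\mu_2)$ and sits under $p_1$, rather than being absorbed by the diffuse part, is exactly where the dichotomy $\mu_1^{-1}+\mu_2\gtrless1$ enters.
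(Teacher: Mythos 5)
Your overall architecture matches the paper's: decompose over $D=\bbC p_0\oplus\bbC p_1$, show a vertex corner is $(T_\lambda,\vphi_\lambda)$, then globalise with Lemma~\ref{lem:antman2} and read off the atom from Remark~\ref{rem:atomic}. Steps 1 and 3 are essentially correct. The problem is that Step 2 --- which you yourself identify as the crux --- is asserted rather than proved, and it is precisely the content of the lemma.

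Two concrete gaps. First, to ``trade amalgamation over $A$ for free products over $\bbC$'' via Proposition~\ref{prop:amalgamate}, you must exhibit $D$ as \emph{freely complemented} in one of the two factors of the amalgamated free product. This is a genuine step: the paper proves it by invoking Dykema's computation $\underset{1,1}{M_2(L(\Z))}\oplus\underset{\mu_1-1}{\C}\cong\left[\underset{1}{\C}\oplus\underset{\mu_1}{\C}\right]*\left[\underset{1}{\C}\oplus\underset{\mu_1}{\C}\right]$ from \cite[Theorem 1.1]{MR1201693} and checking that the images of $p_0,p_1$ land where $D$ sits. Without this, Proposition~\ref{prop:amalgamate} does not apply and your ``free product over $\bbC$ assembled from the corners'' has no justification. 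Second, even granting the scalar free product, the claim that the compression ``takes the form $(L(\Z),\tau)*(\cB(\ell^2(\N_0)),\psi_\lambda)*(C,\tau_C)$'' is not derived and is not what the computation actually produces. No copy of $\cB(\cH)$ with state $\psi_\lambda$ (eigenvalue $\lambda=(\mu_1\mu_2)^{-1}$) appears as a free factor by ``collecting the diffuse tracial pieces''; the only matrix units available carry the eigenvalues $\mu_2^{\pm1}$ of the single non-tracial edge, not the product $\mu_1\mu_2$. The paper's route is different: it builds intermediate algebras $\cN_1\subset\cN_2$, computes $(P\cN_1P,\vphi^P)$ explicitly as a tracial direct sum via \cite[Theorem 1.1]{MR1201693}, writes $(P\cN_2P,\vphi^P)\cong(P\cN_1P,\vphi^P)*\underset{\mu_2^{-1},\mu_1}{M_2(\C)}$ by Lemma~\ref{lem:Dykema}, and only then obtains the eigenvalue $\lambda$ by applying \cite[Theorem 4.3]{Hou07} (which manufactures a free $(T_\lambda,\vphi_\lambda)$ from a suitable trace-preserving copy of $\C\oplus\C$ inside $P\cN_1P$ together with the off-diagonal matrix unit) followed by Corollary~\ref{cor:absorption}. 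The mass bookkeeping that decides the dichotomy $\mu_1^{-1}+\mu_2\gtrless1$ lives entirely inside this computation, so deferring it to a ``main obstacle'' remark leaves the lemma unproved.
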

\begin{proof}
Let $u_{i}$ be the polar part of $Y_{e_{i}}$ and denote $D:=\overset{p_0}{\bbC}\oplus \overset{p_1}{\bbC}$. Notice that $\cM(\Gamma_0,\mu)$ must be the following:
	\[
		\cM(\Gamma_0,\mu) \cong \begin{cases}
\left(\underset{1, 1}{\overset{p_{0}, u_{1}^{*}u_{1}}{M_{2}(L(\Z))}} \oplus \underset{\mu_{1}-1}{\overset{p_{1} - u_{1}^{*}u_{1}}{\C}}\right) \underset{D}{\Asterisk} \left(\underset{1-\frac{1}{\mu_{2}}}{\overset{p_{0} - u_{2}^{*}u_{2}}{\C}} \oplus \underset{\frac{1}{\mu_{2}}, \mu_{1}}{\overset{u_{2}^{*}u_{2}, p_{1}}{M_{2}(L(\Z))}}\right) &\text{ if } \mu_{2} \geq 1  \\
\left(\underset{1, 1}{\overset{p_{0}, u_{1}^{*}u_{1}}{M_{2}(L(\Z))}} \oplus \underset{\mu_{1}-1}{\overset{p_{1} - u_{1}^{*}u_{1}}{\C}}\right) \underset{D}{\Asterisk} \left( \underset{1, \mu_{1}\mu_2}{\overset{p_{0}, u_{2}u_{2}^{*}}{M_{2}(L(\Z))}} \oplus \underset{\mu_{1}-\mu_{1}\mu_{2}}{\overset{p_{1} - u_{2}u_{2}^{*}}{\C}} \right) &\text{ if } \mu_{2} < 1
\end{cases}.		
	\]
Note by \cite[Theorem 1.1]{MR1201693} that we have a trace preserving isomorphism
	\[
		\underset{1, 1}{M_{2}(L(\Z))} \oplus \underset{\mu_{1}-1}{\C} \cong \left[\overset{p_{0}'}{\underset{1}{\C}} \oplus \overset{p_{1}'}{\underset{\mu_{1}}{\C}}\right] *  \left[\underset{1}{\C} \oplus \underset{\mu_{1}}{\C}\right].
	\]
Furthermore, in this isomorphism,
	\[
		p_{0}' = \begin{pmatrix} 1 & 0 \\ 0 & 0\end{pmatrix} \oplus 0 \qquad\text{and}\qquad p_{1}' = \begin{pmatrix} 0 & 0 \\ 0 & 1\end{pmatrix} \oplus 1.
	\]
This means that there exists a trace preserving isomorphism
	\begin{align*}
		\underset{1, 1}{\overset{p_{0}, u_{1}^{*}u_{1}}{M_{2}(L(\Z))}} \oplus \underset{\mu_{1}-1}{\overset{p_{1} - u_{1}^{*}u_{1}}{\C}} &\to \left[\overset{p_{0}'}{\underset{1}{\C}} \oplus \overset{p_{1}'}{\underset{\mu_{1}}{\C}}\right] *  \left[ \underset{1}{\C} \oplus  \underset{\mu_{1}}{\C}\right]\\
		p_0&\mapsto p_0'\\
		p_1&\mapsto p_1',
	\end{align*}
that is, $D$ is freely complemented in this von Neumann algebra.  Consequently, by \cite[Proposition 4.1]{Hou07}, we have
	\[
		\cM(\Gamma_0,\mu) \cong \begin{cases}
\left(\underset{1}{\C} \oplus \underset{\mu_{1}}{\C} \right) * \left(\underset{1-\frac{1}{\mu_{2}}}{\overset{p_{0} - u_{2}^{*}u_{2}}{\C}} \oplus \underset{\frac{1}{\mu_{2}}, \mu_{1}}{\overset{u_{2}^{*}u_{2}, p_{1}}{M_{2}(L(\Z))}}\right) &\text{ if } \mu_{2} \geq 1  \\
\left(\underset{1}{\C} \oplus \underset{\mu_{1}}{\C} \right) * \left( \underset{1, \mu_{1}\mu_2}{\overset{p_{0}, u_{2}u_{2}^{*}}{M_{2}(L(\Z))}} \oplus \underset{\mu_{1}-\mu_{1}\mu_{2}}{\overset{p_{1} - u_{2}u_{2}^{*}}{\C}} \right) &\text{ if } \mu_{2} < 1
\end{cases}.
	\]

\noindent
\underline{\textbf{Case 1:}} Assume $\mu_2 \geq 1$. Consider the following von Neumann subalgebras of $\cM(\Gamma_0,\mu)$:
	\begin{align*}
		(\cN_{1},\vphi) &:= \left(\underset{1}{\C} \oplus \underset{\mu_{1}}{\C} \right) * \left(\underset{1 - \frac{1}{\mu_{2}}}{\overset{p_{0} - u_{2}^{*}u_{2}}{\C}} \oplus \underset{\frac{1}{\mu_{2}} + \mu_{1}}{\overset{P}{\C}} \right)\\
		(\cN_{2},\vphi) &:= \left(\underset{1}{\C} \oplus \underset{\mu_{1}}{\C} \right) * \left(\underset{1 - \frac{1}{\mu_{2}}}{\overset{p_{0} - u_{2}^{*}u_{2}}{\C}} \oplus \underset{\frac{1}{\mu_{2}}, \mu_{1}}{\overset{u_{2}^{*}u_{2}, p_{1}}{M_{2}(\C)}}\right)
	\end{align*}
where $P = p_{1} + u_{2}^{*}u_{2}$.  Note that by \cite[Theorem 1.1]{MR1201693}, we have
	\[
		(P\cN_{1}P,\vphi^P) \cong \underset{\frac{1}{\mu_{2}}}{\C} \oplus \underset{1 - \frac{1}{\mu_{2}}}{L(\Z)} \oplus \underset{\mu_{1} - 1 + \frac{1}{\mu_{2}}}{\C}.
	\]
By Lemma~\ref{lem:Dykema}, we see that 
	\[
		(P\cN_{2}P,\vphi^P) = (P\cN_{1}P,\vphi^P) * \underset{\frac{1}{\mu_{2}}, \mu_{1}}{\overset{u_{2}^{*}u_{2}, p_{1}}{M_{2}(\C)}}.
	\]
Note that there is a trace preserving inclusion of $\underset{\frac{1}{\mu_{2}}}{\C} \oplus \underset{\mu_{1}}{\C}$ into $P\cN_{1}P$.  By \cite[Theorem 4.3]{Hou07}, it follows that 
	\[
		(P\cN_{2}P,\vphi^P) \cong (P\cN_{2}P,\vphi^P) * (T_{\lambda}, \vphi_{\lambda}).
	\]
 Corollary~\ref{cor:absorption} yields $(P\cN_{2}P,\vphi^P) \cong (T_{\lambda}, \vphi_{\lambda})$.  Using Lemma~\ref{lem:antman1}, we see that $(p_{1}\cN_{2}p_{1},\vphi^{p_1}) \cong (T_{\lambda}, \vphi_{\lambda})$.  

Using Lemma~\ref{lem:Dykema} again along with free absorption, we obtain
	\[
		(p_{1}\cM(\Gamma_{0}, \mu)p_{1},\vphi^{p_1}) \cong (p_{1}\cN_{2}p_{1},\vphi^{p_1}) * (L(\Z),\tau) \cong (T_{\lambda}, \vphi_{\lambda}).
	\]
Since $z=z(p_1\colon \cM(\Gamma_0,\mu))$ by Remark~\ref{rem:central_support_same_in_centralizer}, it follows from Lemma~\ref{lem:antman2} that $(z\cM(\Gamma_{0}, \mu),\vphi^z) \cong (T_{\lambda}, \vphi_{\lambda})$. The formula for $(\cM(\Gamma_0,\mu),\vphi)$ then follows from Remark~\ref{rem:atomic}.\\

\noindent
\underline{\textbf{Case 2:}} Assume $\mu_2 < 1$. Consider the following von Neumann subalgebras of $\cM(\Gamma_0,\mu)$:
	\begin{align*}
		(\cN_{1},\vphi) &:= \left(\underset{1}{\C} \oplus \underset{\mu_{1}}{\C} \right) * \left( \underset{1 + \mu_{1}\mu_{2}}{\overset{Q}{\C}} \oplus \underset{\mu_{1} - \mu_{1}\mu_{2}}{\overset{p_{1} - u_{2}u_{2}^{*}}{\C}} \right)\\
		(\cN_{2},\vphi) &:= \left(\underset{1}{\C} \oplus \underset{\mu_{1}}{\C} \right) * \left( \underset{1, \mu_{1}\mu_{2}}{\overset{p_{0}, u_{2}u_{2}^{*}}{M_{2}(\C)}} \oplus \underset{\mu_{1} - \mu_{1}\mu_{2}}{\overset{p_{1} - u_{2}u_{2}^{*}}{\C}}\right) 
	\end{align*}
where $Q = p_{0} + u_{2}u_{2}^{*}$.  Note that by \cite[Theorem 1.1]{MR1201693}, we have
	\[
		(Q\cN_{1}Q,\vphi^Q) \cong \underset{1 + \mu_{1}\mu_{2} - \mu_{1}}{\C} \oplus \underset{\mu_{1} - \mu_{1}\mu_{2}}{L(\Z)} \oplus \underset{\mu_{1}\mu_{2}}{\C}.
	\]
Proceeding---\textit{mutatis mutandis}---as in Case 1 finishes the proof.
\end{proof}

The following represents a higher order version of the previous lemma. It is not necessary as a foundational step, but it will be used in simplifying the proofs in our construction steps. Since the proof utilizes the previous lemma as a base case for an induction argument, we present it presently.

\begin{lem}\label{lem:switcheroo}
Let $\Gamma_0=(V_0,E_0)$, $\mu$, and $\vphi$ satisfy
	\[
		\begin{tikzpicture}
		\node[left] at (-1,1) {$\Gamma_0$};
		
		\node[left] at (-1,0) {\scriptsize$0$}; 
		\draw[fill=black] (-1,0) circle (0.05);
		
		\node[right] at (1,0) {\scriptsize$1$}; 
		\draw[fill=black] (1,0) circle (0.05);
		
		\node[below] at (0, 0.57735) {$e_1$}; 
		\draw (-1,0) arc (150:30:1.1547);
		\draw[->] (-1,0) arc (150:85:1.1547);
		
		\draw[densely dotted,thick] (0,0.6) -- (0,1);
		
		\node[above] at (0,1) {$e_n$}; 
		\draw (-1,0) arc (180:0:1);
		\draw[->] (-1,0) arc (180:85:1);
		
		\node[above] at (0,-0.57735) {$e_1^{\op}$}; 
		\draw (1,0) arc (-30:-150:1.1547);
		\draw[->] (1,0) arc (-30:-95:1.1547);
		
		\draw[densely dotted, thick] (0,-0.6) -- (0,-1);
		
		\node[below] at (0,-1) {$e_n^{\op}$}; 
		\draw (-1,0) arc (-180:0:1);
		\draw[->] (1,0) arc (0:-95:1);

		\node[right] at (2,1.25) {$\bullet$ $V_0=\{0,1\}$ with $\vphi(p_0)=1$ and $\vphi(p_1)=\mu(e_1)$};
		
		\node[right] at (2,0.25) {$\bullet$ $E_0=\{e_1,e_1^{\op},\ldots, e_n, e_n^{\op}\}$, $n\geq 2$, with $s(e_i)=0$ and $t(e_i)=1$};
		\node[right] at (2,-.25) {{\color{white}$\bullet$} for each $i=1,\ldots, n$}; 
		
		\node[right] at (2,-1.25) {$\bullet$ $\mu(e_1)>1$ and $\frac{\mu(e_i)}{\mu(e_j)} \neq 1$ for at least one pair $i,j=1,\ldots,n$};
		\end{tikzpicture}
	\]
 Then
 	\[
 		(\cM(\Gamma_0, \mu),\vphi) \cong \underset{\mu(e_1)\sum_{i=1}^n \frac{1}{\mu(e_i)}}{(T_H,\vphi_H)}\oplus \underset{\mu(e_1)\left[1- \sum_{i=1}^n \frac{1}{\mu(e_i)}\right]}{\overset{r_1}{\bbC}},
 	\]
where $H=\< \frac{\mu(e_i)}{\mu(e_j)} \colon i,j=1,\ldots, n\>$ and $r_1\leq p_1$ exists if and only if $\sum_{i=1}^n \frac{1}{\mu(e_i)} < 1$.
\end{lem}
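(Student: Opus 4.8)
The plan is to induct on $n$, with the base case $n=2$ furnished by Lemma~\ref{lem:base_case1}. For $n=2$: relabel the pair $\{e_2,e_2^{\op}\}$ so that the edge named $e_2$ runs from $1$ to $0$ (which only interchanges $\mu(e_2)$ with $\mu(e_2)^{-1}$ within that pair); the hypotheses then become exactly those of Lemma~\ref{lem:base_case1} with $\mu_1=\mu(e_1)>1$ and $\mu_2=\mu(e_2)^{-1}$, and it remains to match its output with ours: $\lambda=(\mu_1\mu_2)^{-1}=\mu(e_2)/\mu(e_1)$ generates $H$, so $T_\lambda\cong T_H$; the dichotomy $\mu_1^{-1}+\mu_2\geq 1$ reads $\sum_{i=1}^{2}\mu(e_i)^{-1}\geq 1$; and the atomic summand together with all masses agrees, the latter being read off from Remark~\ref{rem:atomic}.

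For the inductive step ($n\geq 3$) I would first reorder the pairs so that $\mu(e_1)>1$, $\mu(e_2)\neq\mu(e_1)$, and $\mu(e_1),\dots,\mu(e_{n-1})$ are not all equal; a short combinatorial check (using that nontriviality of $H$ forbids all $n$ weights from coinciding, and that $n\geq 3$) shows this can be arranged. Then the subgraph $\Gamma_0'\subseteq\Gamma_0$ on $\{e_1,e_1^{\op}\},\dots,\{e_{n-1},e_{n-1}^{\op}\}$ satisfies the hypotheses of the lemma, and — crucially, since $e_1$ may be taken for the spanning tree — the restriction of $\vphi$ to $\cM(\Gamma_0',\mu)$ coincides with the functional assigned to $\Gamma_0'$; so by induction $(\cM(\Gamma_0',\mu),\vphi)\cong\overset{z'}{(T_{H'},\vphi_{H'})}\oplus\overset{r_1'}{\C}$ with $H'=\langle\mu(e_i)/\mu(e_j):i,j\leq n-1\rangle$ nontrivial, $r_1'\leq p_1$, and $z'=\bigvee_{e\in E(\Gamma_0')}u_eu_e^*$. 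Writing $D:=\C p_0\oplus\C p_1$ and letting $\cM^{(n)}:=W^*(A,Y_{e_n})$ be the one-pair graph algebra of $\{e_n,e_n^{\op}\}$, the amalgamated freeness of the $W^*(A,Y_{e_i})$ over $A$ gives $\cM(\Gamma_0,\mu)=\cM(\Gamma_0',\mu)\underset{D}{\Asterisk}\cM^{(n)}$.

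Next I would eliminate the amalgamation. Exactly as in the proof of Lemma~\ref{lem:base_case1} (applied to $Y_{e_n^{\op}}$, so that the cases $\mu_2\geq 1$, $\mu_2<1$ there become $\mu(e_n)\leq 1$, $\mu(e_n)>1$), $\cM^{(n)}$ is a copy of an $M_2(L(\Z))\oplus\C$-type algebra in which $D$ is freely complemented; writing $\cM^{(n)}\cong D*B_n$ with $B_n$ finite-dimensional abelian and noting that $\bbE$ restricts to a $\vphi$-preserving conditional expectation $\cM(\Gamma_0',\mu)\to D$, Proposition~\ref{prop:amalgamate} yields a state-preserving isomorphism $(\cM(\Gamma_0,\mu),\vphi)\cong(\cM(\Gamma_0',\mu),\vphi)*(B_n,\psi_n)$. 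To identify this, observe that $\mu(e_1)>1$ gives $u_{e_1}u_{e_1}^*=p_0$, so by Lemma~\ref{lem:centralizersupport} and Remark~\ref{rem:central_support_same_in_centralizer} we have $z(p_0:\cM(\Gamma_0,\mu)^\vphi)=z(p_0:\cM(\Gamma_0,\mu))=\bigvee_{e\in E(\Gamma_0)}u_eu_e^*=:z$; hence by Lemma~\ref{lem:antman2} it suffices to prove $(p_0\cM(\Gamma_0,\mu)p_0,\vphi^{p_0})\cong(T_H,\vphi_H)$, after which $(z\cM(\Gamma_0,\mu),\vphi^z)\cong(T_H,\vphi_H)$ and, since $1-z\leq p_1$, Remark~\ref{rem:atomic} delivers the asserted decomposition. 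For the corner I would mimic the argument of Lemma~\ref{lem:base_case1}, with the two-dimensional factor $\C\oplus\C$ there replaced throughout by $\cM(\Gamma_0',\mu)\cong\overset{z'}{(T_{H'},\vphi_{H'})}\oplus\overset{r_1'}{\C}$: introduce the analogous intermediate subalgebras, use Lemma~\ref{lem:Dykema} to split off the relevant corner of $\cM(\Gamma_0',\mu)$, use Lemma~\ref{lem:antman1} to recognise that corner once more as $(T_{H'},\vphi_{H'})$, and absorb the leftover finite-dimensional and abelian pieces coming from $B_n$ (and from the $M_2(L(\Z))$ of $\cM^{(n)}$) using free absorption, \cite[Theorem 4.3]{Hou07}, and Corollary~\ref{cor:absorption}. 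The eigenoperator produced in the absorption step has eigenvalue $\mu(e_n)/\mu(e_1)$, so the group in play grows from $H'$ to $\langle H',\mu(e_n)/\mu(e_1)\rangle=H$, completing the induction.

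The main obstacle will be bookkeeping rather than any single conceptual point: keeping every isomorphism $\vphi$-preserving across the amalgamated free product and the various compressions; verifying that $D$ is genuinely freely complemented in the one-pair algebra $\cM^{(n)}$ in each regime of $\mu(e_n)$ — including the boundary case $\mu(e_n)=1$, which needs the non-tracial free-product computations of \cite{Dyk97} rather than just \cite[Theorem 1.1]{MR1201693}; and confirming that the newly created eigenvalue is exactly $\mu(e_n)/\mu(e_1)$ and that the two direct summands carry the asserted masses (again via Remark~\ref{rem:atomic}). The preliminary reordering of the pairs also requires a brief justification that $\Gamma_0'$ still meets the hypotheses of the lemma.
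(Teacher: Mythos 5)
Your overall architecture (induction on $n$ with Lemma~\ref{lem:base_case1} as the base case, peeling off the pair $\{e_n,e_n^{\op}\}$, writing $\cM(\Gamma_0,\mu)=\cM(\Gamma_0',\mu)\Asterisk_D\cM^{(n)}$, and collapsing the amalgamation via Proposition~\ref{prop:amalgamate}) matches the paper's, and your base case and relabelling step are fine. But the step where you collapse the amalgamation has a genuine gap: you claim $D$ is freely complemented in $\cM^{(n)}=W^*(Y_{e_n},D)$ with finite-dimensional abelian complement $B_n$, ``exactly as in Lemma~\ref{lem:base_case1}.'' In that lemma the free complementation is established in the factor $W^*(Y_{e_1},D)$, not $W^*(Y_{e_2},D)$, and it works there only because $\vphi(p_1)=\mu(e_1)\vphi(p_0)$, so that $u_{e_1}^*u_{e_1}$ and $p_0$ have equal mass and the algebra is $\underset{1,1}{M_2(L(\Z))}\oplus\C$, which \cite[Theorem 1.1]{MR1201693} identifies with $(\C\oplus\C)*(\C\oplus\C)$. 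For $e_n$ this fails whenever $\mu(e_n)\neq\mu(e_1)$: then $Y_{e_n}$ is a nontrivial eigenoperator of $\vphi$, so $\vphi$ is not a trace on $\cM^{(n)}$ (the two equivalent corners of its $M_2(L(\Z))$ part carry masses $\mu(e_n)$ and $\mu(e_1)$, resp.\ $1$ and $\mu(e_1)/\mu(e_n)$), whereas $\vphi$ restricted to any $D*B_n$ with $B_n$ finite-dimensional abelian is a free product of traces, hence a trace. So no state-preserving isomorphism $\cM^{(n)}\cong D*B_n$ exists, and one cannot in general reorder the edges to force $\mu(e_n)=\mu(e_1)$ (e.g.\ when all the $\mu(e_i)$ are distinct).

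The paper resolves this by putting the free complementation on the \emph{other} factor: $D$ is freely complemented in $\cM(\Gamma_0',\mu)$. When the inductive output is $(T_{H'},\vphi_{H'})$ this is immediate from Corollary~\ref{cor:absorption}; when there is an atomic summand $r_1'$, it requires a separate claim (proved with Lemma~\ref{lem:Dykema} and Lemma~\ref{lem:antman2}) that $(T_{H'},\vphi_{H'})\oplus\C\cong D*\left[(T_{H'},\vphi_{H'})\oplus\underset{\alpha}{\C}\right]$ for a suitable $\alpha$ — a step your proposal does not address. Proposition~\ref{prop:amalgamate} then leaves the scalar free product of $(T_{H'},\vphi_{H'})\oplus\C$ with $W^*(Y_{e_n},D)$, and the remainder of your outline (intermediate subalgebras, Lemma~\ref{lem:Dykema}, \cite[Theorem 4.3]{Hou07}, Corollary~\ref{cor:absorption}, Lemmas~\ref{lem:antman1} and~\ref{lem:antman2}) is essentially how the paper finishes, except that the boundary subcase $\mu(e_n)/\mu(e_1)=1$ with a positive atom must be handled by an interpolated-free-group-factor computation rather than by \cite[Theorem 4.3]{Hou07}. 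Your later remark about absorbing ``the $M_2(L(\Z))$ of $\cM^{(n)}$'' alongside $B_n$ also signals the confusion: if $\cM^{(n)}$ really were $D*B_n$, that $M_2(L(\Z))$ would not survive as a separate free factor after applying Proposition~\ref{prop:amalgamate}.
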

\begin{proof}
We proceed by induction on $n$. Observe that the case $n=2$ follows from Lemma~\ref{lem:base_case1}. Suppose the result holds for $n-1$. By relabeling $e_n$ if necessary, we may assume $H':=\<\frac{\mu(e_i)}{\mu(e_j)}\colon i,j=1,\ldots, n-1\>$ is non-trivial. Denote $D:=\overset{p_0}{\bbC}\oplus\overset{p_1}{\bbC}$ and $\Gamma_1 := (V,E\setminus\{e_n,e_n^{\op}\}, \mu)$. 

First suppose $\displaystyle \sum_{i=1}^{n-1} \frac{1}{\mu(e_i)} \geq 1$ so that we also have $\displaystyle \sum_{i=1}^{n} \frac{1}{\mu(e_i)} \geq 1$. Then the induction hypothesis implies
	\begin{align*}
		\cM(\Gamma_0,\mu) &= \cM(\Gamma_1,\mu) \Asterisk_D W^*(Y_{e_n},D)\\
			&= (T_{H'},\vphi_{H'}) \Asterisk_D \begin{cases} \underset{1-\mu(e_n)}{\overset{q_0}{\bbC}} \oplus \underset{\mu(e_n), \mu(e_1)}{\overset{p_0-q_0, p_1}{M_2(L(\bbZ))}} &\text{if }\mu(e_n)<1\\
			&\\
			 \underset{1, \frac{\mu(e_1)}{\mu(e_n)}}{\overset{p_0, p_1-q_1}{M_2(L(\bbZ))}} \oplus \underset{\mu(e_1)\left(1-\frac{1}{\mu(e_n)}\right)}{\overset{q_1}{\bbC}} & \text{otherwise}\end{cases},
	\end{align*}
where $q_0=p_0-u_nu_n^*$ and $q_1=p_1-u_n^*u_n$. By Corollary~\ref{cor:absorption} we have $(T_{H'},\vphi_{H'}) \cong D* (T_{H'},\vphi_{H'})$, and so we can convert the above into a scalar-valued free product with respect to $\vphi$ using \cite[Proposition 4.1]{Hou07}. Then applying Corollary~\ref{cor:absorption} again yields
	\[
		(\cM(\Gamma_0,\mu),\vphi) \cong (T_H,\vphi_H).
	\]

Next, assume $\displaystyle \sum_{i=1}^{n-1} \frac{1}{\mu(e_i)} < 1$. Now the induction hypothesis implies
	\begin{align*}
		\cM(\Gamma_0,\mu) &= \cM(\Gamma_1,\mu) \Asterisk_D W^*(Y_{e_n},D)\\
			&= \left[ \underset{1+\mu(e_1)\sum_{i=1}^{n-1} \frac{1}{\mu(e_i)}}{(T_{H'}, \vphi_{H'})} \oplus \underset{\mu(e_1)\left[1- \sum_{i=1}^{n-1} \frac{1}{\mu(e_i)}\right]}{\bbC}\right] \Asterisk_D \begin{cases} \underset{1-\mu(e_n)}{\overset{q_0}{\bbC}} \oplus \underset{\mu(e_n), \mu(e_1)}{\overset{p_0-q_0, p_1}{M_2(L(\bbZ))}} &\text{if }\mu(e_n)<1\\
			&\\
			 \underset{1, \frac{\mu(e_1)}{\mu(e_n)}}{\overset{p_0, p_1-q_1}{M_2(L(\bbZ))}} \oplus \underset{\mu(e_1)\left(1-\frac{1}{\mu(e_n)}\right)}{\overset{q_1}{\bbC}} & \text{otherwise}\end{cases}.
	\end{align*}
We claim that
	\[
		 \underset{1+\mu(e_1)\sum_{i=1}^{n-1} \frac{1}{\mu(e_i)}}{(T_{H'}, \vphi_{H'})} \oplus \underset{\mu(e_1)\left[1- \sum_{i=1}^{n-1} \frac{1}{\mu(e_i)}\right]}{\bbC} \cong D * \left[\underset{1+\mu(e_1)-\alpha}{(T_{H'},\vphi_{H'}) }\oplus \overset{p}{\underset{\alpha}{\bbC}}\right],
	\]
where
	\[
		\alpha:= 1+\mu(e_1)\left[1-\sum_{i=1}^{n-1} \frac{1}{\mu(e_i)}\right] = \mu(e_{1})\left[1-\sum_{i=2}^{n-1} \frac{1}{\mu(e_i)}\right]  .
	\]
First note that $1\leq \alpha <\mu(e_1)$. Set
	\begin{align*}
		(N,\phi)&:= D * \left[\underset{1+\mu(e_1)-\alpha}{\bbC} \oplus \overset{p}{\underset{\alpha}{\bbC}}\right]\\
		(M,\phi)&:= D * \left[\underset{1+\mu(e_1)-\alpha}{(T_{H'},\vphi_{H'})} \oplus \overset{p}{\underset{\alpha}{\bbC}}\right].
	\end{align*}
Then by Lemma~\ref{lem:Dykema}
	\[
		\left( (1-p) M (1-p), \phi^{1-p}\right) \cong \left( (1-p)N (1-p), \phi^{1-p}\right) * (T_{H'},\vphi_{H'}),
	\]
and $z(p\colon M) = z(p\colon N)$. Since $(1-p)N (1-p)$ is abelian, we have that $\left( (1-p) M (1-p), \phi^{1-p}\right) \cong (T_{H'},\vphi_{H'})$. Since $z(p\colon N) = 1-p_1\wedge p$, Lemma~\ref{lem:antman2} gives
	\[
		\left( (1-p_1\wedge p ) M (1-p_1\wedge p), \phi^{1-p_1\wedge p}\right) \cong (T_{H'},\vphi_{H'}),
	\]
and $(p_1\wedge p) M (p_1\wedge p)$ is precisely the claimed atomic piece.

Now, returning to our amalgamated free product above, we have by \cite[Proposition 4.1]{Hou07} that
	\begin{align*}
		(\cM(\Gamma_0,\mu),\vphi)  \cong \left[\underset{1+\mu(e_1)-\alpha}{(T_{H'},\vphi_{H'})} \oplus \overset{p}{\underset{\alpha}{\bbC}}\right] * \begin{cases} \underset{1-\mu(e_n)}{\overset{q_0}{\bbC}} \oplus \underset{\mu(e_n), \mu(e_1)}{\overset{p_0-q_0, p_1}{M_2(L(\bbZ))}} &\text{if }\mu(e_n)<1\\
			&\\
			 \underset{1, \frac{\mu(e_1)}{\mu(e_n)}}{\overset{p_0, p_1-q_1}{M_2(L(\bbZ))}} \oplus \underset{\mu(e_1)\left(1-\frac{1}{\mu(e_n)}\right)}{\overset{q_1}{\bbC}} & \text{otherwise}\end{cases}.
	\end{align*}

\noindent
\underline{\textbf{Case 1:}} We first consider the case $\mu(e_n)<1$. Consider the following von Neumann subalgebras of $\cM(\Gamma_0,\mu)$:
	\begin{align*}
		(\cN_1,\vphi)&:=\left[\underset{1+\mu(e_1)-\alpha}{(T_{H'},\vphi_{H'})} \oplus \overset{p}{\underset{\alpha}{\bbC}}\right] * \left[ \underset{1-\mu(e_n)}{\overset{q_0}{\bbC}} \oplus \overset{p_0-q_0+p_1}{\underset{\mu(e_n)+\mu(e_1)}{\bbC}}\right]\\
		(\cN_2,\vphi)&:=\left[\underset{1+\mu(e_1)-\alpha}{(T_{H'},\vphi_{H'})} \oplus \overset{p}{\underset{\alpha}{\bbC}}\right] * \left[\underset{1-\mu(e_n)}{\overset{q_0}{\bbC}} \oplus \underset{\mu(e_n), \mu(e_1)}{\overset{p_0-q_0, p_1}{M_2(\bbC)}}\right]\\
	\end{align*}
Then an application of Lemma~\ref{lem:Dykema} yields
	\[
		(\cN_1,\vphi)\cong ( T_{H'}, \vphi_{H'}) \oplus \overset{p\wedge(p_0-q_0+p_1)}{\underset{\alpha + \mu(e_n) - 1}{\bbC}},
	\]
and so by Lemma~\ref{lem:antman1}
	\[
		\left(P\cN_1P, \vphi^{P}\right) \cong \underset{1+\mu(e_1) - \alpha}{(T_{H'},\vphi_{H'})} \oplus \overset{p\wedge(p_0-q_0+p_1)}{\underset{\alpha+\mu(e_n)-1}{\bbC}},
	\]
where $P:=p_0-q_0+p_1$. Thus by Lemma~\ref{lem:Dykema} we have
	\[
		(P\cN_2P,\vphi^{P}) \cong \left[ \underset{1+\mu(e_1)- \alpha}{(T_{H'},\vphi_{H'})} \oplus \underset{\alpha+\mu(e_n)- 1}{\bbC} \right] * \underset{\mu(e_n), \mu(e_1)}{M_2(\bbC)}.
	\]
Since $\mu(e_n) \leq \min\{ 1+\mu(e_1)-\alpha, \alpha  + \mu(e_n) - 1\}$, \cite[Theorem 4.3]{Hou07} implies for $\lambda=\frac{\mu(e_n)}{\mu(e_1)}$
	\begin{align*}
		(P\cN_2P,\vphi^{P})&\cong (P\cN_2P,\vphi^{P}) * (T_\lambda,\vphi_\lambda)\\
			&\cong \left[ (T_{H'},\vphi_{H'}) \oplus \bbC\right] * (T_\lambda,\vphi_\lambda)
	\end{align*}
Now by Corollary~\ref{cor:absorption}
	\begin{align*}
		(\cA,\vphi^{P}) &:= [\overset{q}{\bbC} \oplus \bbC ] * (T_\lambda,\vphi_\lambda) \cong (T_\lambda,\vphi_\lambda).
	\end{align*}
By Lemma~\ref{lem:antman1}, $(q\mc{A} q,\vphi^q)\cong (T_\lambda,\vphi_\lambda)$. Thus by Lemma~\ref{lem:Dykema} we have
	\begin{align*}
		(q\cN_2q,\vphi^q) &\cong (q\mc{A} q,\vphi^q) * (T_{H'},\vphi_{H'})\cong (T_\lambda,\vphi_\lambda) *(T_{H'},\vphi_{H'})\cong (T_H,\vphi_H).
	\end{align*}
Since $z(q\colon \cA^{\vphi^P}) = P$ and since $z(P\colon \cM(\Gamma_0,\mu)^\vphi)\geq z(p_1\colon \cM(\Gamma_0,\mu)^\vphi) =1$ by Lemma \ref{lem:centralizersupport}, we have $z(q\colon \cM(\Gamma_0,\mu)^\vphi)=1$. Then  Lemma~\ref{lem:antman2} implies that $(\cN_2,\vphi)\cong (T_H,\vphi_H)$. By Lemmas \ref{lem:Dykema} and \ref{lem:antman1} and free absorption we have
	\[
		(p_1\mc{M}(\Gamma_0,\mu) p_1, \vphi^{p_1}) \cong (p_1 \cN_2 p_1,\vphi^{p_1}) * (L(\bbZ),\tau) \cong (T_H,\vphi_H).
	\]
Then by Lemma~\ref{lem:antman2} we obtain $(\cM(\Gamma_0,\mu), \vphi) \cong (T_H,\vphi_H)$.

\noindent
\underline{\textbf{Case 2:}} Next, we consider the case $\mu(e_n)\geq 1$. Consider the following von Neumann subalgebras of $\cM(\Gamma_0,\mu)$:
	\begin{align*}
		(\cN_1,\vphi) &:=\left[(T_{H'},\vphi_{H'}) \oplus \overset{p}{\underset{\alpha}{\bbC}}\right] * \left[  \overset{p_0+p_1-q_1}{\underset{1+\frac{\mu(e_1)}{\mu(e_n)}}{\bbC}} \oplus \underset{\mu(e_1)\left(1- \frac{1}{\mu(e_n)}\right)}{\overset{q_1}{\bbC}} \right]\\
		(\cN_2,\vphi)&:=\left[(T_{H'},\vphi_{H'}) \oplus \overset{p}{\underset{\alpha}{\bbC}}\right] * \left[ \underset{1, \frac{\mu(e_1)}{\mu(e_n)}}{\overset{p_0, p_1-q_1}{M_2(\bbC)}} \oplus \underset{\mu(e_1)\left(1- \frac{1}{\mu(e_n)}\right)}{\overset{q_1}{\bbC}} \right]\\
	\end{align*}
Set
	\begin{align*}
		\beta&:= \left(\alpha+ 1 + \frac{\mu(e_1)}{\mu(e_n)}\right) - 1- \mu(e_1) = \alpha + \frac{\mu(e_1)}{\mu(e_n)} - \mu(e_1)\\
		\gamma&:= \left(\alpha + \mu(e_1)-\frac{\mu(e_1)}{\mu(e_n)}\right) - 1 - \mu(e_1) = \mu(e_1)\left( 1- \sum_{i=1}^n \frac{1}{\mu(e_i)}\right).
	\end{align*}
Then
	\[
		(\cN_1,\vphi)\cong (T_{H'},\vphi_{H'}) \oplus \overset{ p\wedge(p_0+p_1 - q_1)}{\underset{\beta}{\bbC}} \oplus \overset{p\wedge q_1}{\underset{\gamma}{\bbC}},
	\]
so that by Lemma~\ref{lem:Dykema}
	\[
		(Q \cN_2Q,\vphi^{Q}) \cong (Q\cN_1Q, \vphi^{Q}) * \underset{1, \frac{\mu(e_1)}{\mu(e_n)}}{\overset{p_0, p_1-q_1}{M_2(\bbC)}} \cong \left[(T_{H'}, \vphi_{H'}) \oplus  \overset{ p\wedge(p_0+p_1 - q_1)}{\underset{\beta}{\bbC}}\right] * \underset{1, \frac{\mu(e_1)}{\mu(e_n)}}{\overset{p_0, p_1-q_1}{M_2(\bbC)}},
	\]
where $Q:=p_0+p_1-q_1$. Now, observe that $\beta < \frac{\mu(e_1)}{\mu(e_n)}$ (since $\alpha < \mu(e_1)$). Denote $\lambda=\frac{\mu(e_1)}{\mu(e_n)}$. If $\lambda\neq 1$, then (noting that $(T_{H'})^{\vphi_{H'}}$ is diffuse) \cite[Theorem 4.3]{Hou07} implies
	\begin{align*}
		(Q\cN_2Q,\vphi^Q) \cong (Q\cN_2 Q,\vphi^{Q})  * (T_\lambda,\vphi_\lambda).
	\end{align*}
Note that if $\beta\leq 0$, then the above is immediate regardless of the value of $\lambda$. In any case, by the same argument as in Case 1 we obtain
	\[
		(\cM(\Gamma_0,\mu),\vphi)\cong (T_H,\vphi_H) \oplus \overset{p\wedge q_1}{\underset{\gamma}{\bbC}}.
	\]
If, however, $\beta>0$ and $\lambda=1$, then we take the following approach: by \cite[Proposition 3.2]{MR1201693}
	\[
		(\cA,\vphi^{Q}):= [\overset{q}{\bbC}\oplus \underset{\beta}{\bbC}] * \underset{1,1}{M_2(\bbC)}\cong \begin{cases} (L(\bbF_{t_1}),\tau) & \text{ if } \beta \in [1/2, 3/2]\\
						(L(\bbF_{t_2}),\tau)\oplus M_2(\bbC) & \text{ otherwise }\end{cases},
	\]
for some $t_1,t_2\geq 1$. In the second isomorphism, $q$ majorizes the identity of $M_2(\C)$ (when $\beta < 1/2$) and a non-zero projection in $L(\bbF_{t_2})$. It follows that
	\[
		(q\mc{A} q,\vphi^q) \cong \begin{cases} (L(\bbF_{s_1}), \tau) & \text{if }\frac32\geq \beta\\
					(L(\bbF_{s_2}),\tau) \oplus M_2(\bbC) & \text{if }\frac32 <\beta\end{cases},
	\]
for some $s_1,s_2\geq 1$. In both cases Lemma~\ref{lem:Dykema} and free absorption yields
	\begin{align*}
		(q\cN_2q,\vphi^q) &\cong \begin{cases} (L(\bbF_{s_1}),\tau) * (T_{H'},\vphi_{H'}) & \text{if }\frac32\geq \beta\\
			\left[(L(\bbF_{s_2}),\tau) \oplus M_2(\bbC)\right] * (T_{H'},\vphi_{H'}) & \text{if }\frac32 <\beta\end{cases}\\
			&\cong (T_{H'},\vphi_{H'}).
	\end{align*}
Note that since $\lambda=1$ we have $H'=H$. Following the same procedure as in Case 1, we have
	\[
		(\cM(\Gamma_0,\mu),\vphi)\cong (T_H,\vphi_H) \oplus \overset{p\wedge q_1}{\underset{\gamma}{\bbC}}.
	\]
Observe that the proof is unaffected if $\gamma\leq 0$.
\end{proof}

The following can also be thought of as a higher order version of Lemma~\ref{lem:base_case1}. However, the methods used to establish the isomorphism class are significantly different. Thus we consider this case separately.

\begin{lem}\label{lem:base_case3}
Let $\Gamma_0=(V_0,E_0)$, $\mu$, and $\vphi$ satisfy
	\[
		\begin{tikzpicture}
		\node at (0,0) {$\Gamma_0$};
		
		\node[left] at (-0.866,0.5) {\scriptsize$0$}; 
		\draw[fill=black] (-0.866,0.5) circle (0.05);
		
		\node[above left] at (-0.5,0.866) {$e_1$}; 
		\draw (-0.866,0.5) arc (150:90:1);
		\draw[->] (-0.866,0.5) arc (150:115:1);
		\draw (-0.866,0.5) arc (-90:-30:1);
		\draw[->] (0,1) arc (-30:-65:1);
		
		\node[above] at (0,1) {\scriptsize$1$}; 
		\draw[fill=black] (0,1) circle (0.05);
		
		\node[above right] at (0.5,0.866) {$e_2$}; 
		\draw (0,1) arc (90:30:1);
		\draw[->] (0,1) arc (90:55:1);
		\draw (0,1) arc (-150:-90:1);
		\draw[->] (0.866,0.5) arc (-90:-125:1);
		
		\node[right] at (0.866,0.5) {\scriptsize$2$}; 
		\draw[fill=black] (0.866,0.5) circle (0.05);
		
		\draw[dashed] (0.866,0.5) arc (30:-150:1); 
		
		\node[left] at (-0.866,-0.5) {\scriptsize$n-1$}; 
		\draw[fill=black] (-0.866,-0.5) circle (0.05);
		
		\node[left] at (-1,0) {$e_n$}; 
		\draw (-0.866,-0.5) arc (210:150:1);
		\draw[->] (-0.866,-0.5) arc (210:175:1);
		\draw (-0.866,0.5) arc (30:-30:1);
		\draw[->] (-0.866,0.5) arc (30:-5:1);

		\node[right] at (2,1.25) {$\bullet$ $V_0=\{0,1,2,\ldots, n-1\}$, $n\geq 3$, with $\vphi(p_0)=1$};
		\node[right] at (2,0.75) {{\color{white}$\bullet$} and $\vphi(p_i)=\mu(e_1)\cdots \mu(e_i)$ for $i=1,\ldots,n$}; 
		
		\node[right] at (2,-0.25) {$\bullet$ $E_0=\{e_i:=(i-1,i), e_i^{\op}\colon i=1,\ldots, n\}$ (vertices listed modulo $n$)};
		
		\node[right] at (2,-1.25) {$\bullet$ If $\mu_i:=\mu(e_i)$ then $\mu_1\cdots \mu_{k}\geq 1$, and for all $k \in \{1, \cdots, n\}$.};
		\end{tikzpicture}
	\]
Let $\lambda:=\frac{1}{\mu_1\cdots \mu_n}<1$. Then
	\[
		(\cM(\Gamma_0,\mu),\vphi)\cong ( T_\lambda,\vphi_\lambda) \oplus \bigoplus_{i=1}^{n-1} \underset{\mu_1\cdots \mu_i(1-\mu_{i+1} - \mu_{i}^{-1})}{\overset{r_i}{\bbC}},
	\]
where $r_i\leq p_i$ and the atomic terms in the direct sum may vanish if the indicated mass of $r_i$ is non-positive.
\end{lem}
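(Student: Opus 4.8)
The plan is to reduce to the single compression identity $(p_0\cM(\Gamma_0,\mu)p_0,\vphi^{p_0})\cong(T_\lambda,\vphi_\lambda)$ and then prove that identity by separating off the unique edge of $\Gamma_0$ that is \emph{not} in the centralizer. For the reduction: since every $Y_e$ is an eigenoperator of $\Delta_\vphi$ (the corollary to Proposition~\ref{prop:edges_elements_are_eigenops}), $\vphi$ is almost periodic, and the loops of the cycle have $\mu$-values generating $H(\Gamma_0,\mu)=\<\mu_1\cdots\mu_n\>=\<\lambda\>$, so indeed $(T_H,\vphi_H)=(T_\lambda,\vphi_\lambda)$. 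Because $\mu_1\ge1$ we have $u_{e_1}u_{e_1}^*=p_0$, hence $p_0\le z:=\bigvee_{f\in E_0}u_fu_f^*$, and Lemma~\ref{lem:centralizersupport} together with Remark~\ref{rem:central_support_same_in_centralizer} gives $z(p_0\colon\cM(\Gamma_0,\mu)^\vphi)=z=z(p_0\colon\cM(\Gamma_0,\mu))$. Granting the compression identity, Lemma~\ref{lem:antman2} then yields $(z\cM(\Gamma_0,\mu),\vphi^z)\cong(T_\lambda,\vphi_\lambda)$, while Remark~\ref{rem:atomic} identifies $(1-z)\cM(\Gamma_0,\mu)=\bigoplus_v\C^{r_v}_{\vphi(p_v)(1-\gamma_v)}$ with $\gamma_v=\sum_{s(e)=v}\mu(e)$; the summand at $v=0$ is absent since $\gamma_0=\mu_1+\mu_n^{-1}>1$, and for $v=i\in\{1,\dots,n-1\}$ one has $\gamma_i=\mu_{i+1}+\mu_i^{-1}$, giving mass $\mu_1\cdots\mu_i(1-\mu_{i+1}-\mu_i^{-1})$ --- exactly the stated atomic part (with the suppressed scaling of $\vphi_\lambda$ on the diffuse summand equal to $\vphi(z)$).

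For the compression identity, let $\Gamma_1$ be the subgraph with all the vertices and the edge-pairs $\{e_i,e_i^{\op}\}$, $i=1,\dots,n-1$. As $e_1,\dots,e_{n-1}\in E(\Gamma_{\Tr})$, the subalgebra $\cM(\Gamma_1,\mu)$ lies in $\cM(\Gamma_0,\mu)^\vphi$, so $\vphi$ restricts to a trace on it; since $\Gamma_1$ is a path, $(\cM(\Gamma_1,\mu),\vphi)$ is the tracial graph algebra classified in \cite{MR3266249,MR3110503}, namely a direct sum of an interpolated free group factor with copies of $\C$. Building $\Gamma_1$ edge by edge and using that each of its $M_2(L(\Z))$-blocks has equal diagonal masses (the state being tracial), \cite[Theorem 1.1]{MR1201693} shows $A=\ell^\infty(V_0)$ is freely complemented: $(\cM(\Gamma_1,\mu),\vphi)\cong(A,\vphi|_A)*(N_1,\tau)$ with $N_1$ a countable direct sum of interpolated free group factors and finite-dimensional algebras. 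Since $\cM(\Gamma_0,\mu)=\cM(\Gamma_1,\mu)\Asterisk_AW^*(Y_{e_n},A)$, Proposition~\ref{prop:amalgamate} yields a $\vphi$-preserving isomorphism $\cM(\Gamma_0,\mu)\cong N_1*W^*(Y_{e_n},A)$. Here $Y_{e_n}$ is a generalized circular element joining $p_{n-1}$ ($\vphi$-mass $\mu_1\cdots\mu_{n-1}$) to $p_0$ ($\vphi$-mass $1$) with eigenvalue $\mu_1\cdots\mu_n=\lambda^{-1}$, so $W^*(Y_{e_n},A)$ is the direct sum of a single $M_2(L(\Z))$-block with \emph{unequal} diagonal masses --- $\{\mu_1\cdots\mu_{n-1},\mu_n^{-1}\}$ when $\mu_n\ge1$ and $\{\lambda^{-1},1\}$ when $\mu_n<1$, the inequality holding precisely because $\lambda<1$ --- together with the $\C$-summands $p_1,\dots,p_{n-2}$ and one of $p_0-u_{e_n}^*u_{e_n}$ or $p_{n-1}-u_{e_n}u_{e_n}^*$.

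It remains to identify $N_1*W^*(Y_{e_n},A)$, after compression, with $(T_\lambda,\vphi_\lambda)$. Following the scheme of Lemma~\ref{lem:base_case1} (with a split into the cases $\mu_n\ge1$ and $\mu_n<1$), let $P$ be the identity of the $M_2(L(\Z))$-block and introduce $\cN_1\subset\cN_2\subset\cM(\Gamma_0,\mu)$, where $\cN_1$ replaces that block by the $2$-dimensional abelian algebra obtained by fusing its corners and $\cN_2$ replaces it by the finite-dimensional $M_2(\C)$ carrying the same unequal weights $a,b$. Then \cite[Theorem 1.1]{MR1201693} computes $(P\cN_1P,\vphi^P)$ explicitly --- a direct sum of an $L(\Z)$ with copies of $\C$ that contains a trace-preserving $\C_a\oplus\C_b$ --- Lemma~\ref{lem:Dykema} gives $(P\cN_2P,\vphi^P)\cong(P\cN_1P,\vphi^P)*M_2(\C)$ with weights $a,b$, and \cite[Theorem 4.3]{Hou07} yields $(P\cN_2P,\vphi^P)\cong(P\cN_2P,\vphi^P)*(T_\lambda,\vphi_\lambda)$; this is where the non-traciality is born, the eigenvalue $\lambda$ being the weight ratio $b/a=1/(\mu_1\cdots\mu_n)$. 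Corollary~\ref{cor:absorption} collapses $(P\cN_2P,\vphi^P)$ to $(T_\lambda,\vphi_\lambda)$, Lemma~\ref{lem:antman1} passes to a vertex compression, free absorption reinstates the $L(\Z)$ separating $\cN_2$ from $\cM(\Gamma_0,\mu)$, and \cite[Corollary 7.1]{Nel17} ensures $\cM(\Gamma_0,\mu)^\vphi$ is a factor so that Lemma~\ref{lem:antman2} applies, giving $(p_0\cM(\Gamma_0,\mu)p_0,\vphi^{p_0})\cong(T_\lambda,\vphi_\lambda)$ and hence the lemma.

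The main obstacle is bookkeeping rather than a single hard step: verifying that $A$ is freely complemented in $\cM(\Gamma_1,\mu)$ with the correct embedding of the vertex projections (so the passage to a scalar free product is genuinely $\vphi$-preserving), pinning down the two $M_2(L(\Z))$-block masses in each of the cases $\mu_n\ge1$ and $\mu_n<1$ and checking $a\ne b$, keeping track of the central-support hypotheses at every compression so each use of Lemma~\ref{lem:antman2} is valid, and carrying out the slightly different regrouping of the $\C$-summands when $\mu_n<1$ (exactly as in Case~2 of Lemma~\ref{lem:base_case1}). The hypothesis $\mu_1\cdots\mu_k\ge1$ for all $k$ is used precisely to make $\vphi(p_0)$ minimal among the vertices and to force each $u_{e_i}$ to have full range on the lower-mass side, which is what lets $\cM(\Gamma_1,\mu)$ sit inside $\cM(\Gamma_0,\mu)$ with the structure exploited above.
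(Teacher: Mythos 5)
Your reduction to the compression identity and the endgame (Lemma~\ref{lem:antman2} plus Remarks~\ref{rem:central_support_same_in_centralizer} and \ref{rem:atomic}) match the paper, and your final free-product manipulations ($\cN_1\subset\cN_2$, Lemma~\ref{lem:Dykema}, \cite[Theorem 4.3]{Hou07}, Corollary~\ref{cor:absorption}) are in the same spirit as the paper's case analysis. But the step your argument rests on --- that the full $n$-dimensional vertex algebra $A=\ell^\infty(V_0)$ is freely complemented in the path algebra $\cM(\Gamma_1,\mu)$, i.e.\ $(\cM(\Gamma_1,\mu),\vphi)\cong(A,\vphi|_A)*(N_1,\tau)$ --- is false in general, and the edge-by-edge justification cannot produce it. First, $A$ is not freely complemented in a single edge algebra $W^*(Y_{e_i},A)$: the projections $p_j$ with $j\neq i-1,i$ are full central summands there, whereas in any free product $A*F$ with $F$ nontrivial a minimal projection of $A$ can only carry an atom of the form $p_j\wedge q$, of strictly smaller mass. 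What \cite[Theorem 1.1]{MR1201693} gives is free complementation of the two-dimensional algebra $\C p_{i-1}\oplus\C p_i$ in the corner $(p_{i-1}+p_i)W^*(Y_{e_i},A)(p_{i-1}+p_i)$, which is not enough to iterate Proposition~\ref{prop:amalgamate} over the full amalgam $A$.

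Second, the conclusion itself fails. Take $n=5$ with $(\vphi(p_0),\dots,\vphi(p_4))=(1,10,1,20,2)$, i.e.\ $(\mu_1,\dots,\mu_4)=(10,\tfrac1{10},20,\tfrac1{10})$ and any $\mu_5>\tfrac12$; all partial products are $\geq 1$, so this is a legitimate instance of the lemma. The path algebra $\cM(\Gamma_1,\mu)$ then has exactly two atoms, of mass $8$ under $p_1$ and $17$ under $p_3$. If it were $A*N_1$, Dykema's description forces the first atom to be $p_1\wedge q$ with $\tau(q)=1-\tfrac{2}{34}$ and the second to be $p_3\wedge q'$ with $\tau(q')=1-\tfrac{3}{34}$; these are minimal projections in distinct central summands of $N_1$, hence orthogonal, yet $\tau(q)+\tau(q')=\tfrac{63}{34}>1$. (Alternatively: the projection $q$ of mass $1-\tfrac2{34}$ would also force an atom of mass $18\neq 17$ under $p_3$.) This is exactly why the paper first compresses by $p_0+p_{n-1}$, so that the amalgam shrinks to the two-dimensional $D=\C p_0\oplus\C p_{n-1}$, and then proves free complementation of $D$ in the \emph{compressed} path algebra via an explicit free-dimension computation (Claims 1 and 2 in the paper's proof, split according to $\mu_{n-1}\geq 1$ or $\mu_{n-1}<1$); that computation comes out exactly tight ($t_B=1$ in the worst case), so there is no slack to spare for a stronger statement. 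To repair your argument you would need to replace the global free complementation of $A$ by this compressed, two-dimensional version, after which your remaining steps go through essentially as in the paper.
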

\begin{proof}
We first compute the compression of $\cM(\Gamma_0,\mu)$ by $p_0+p_{n-1}$. Set
	\[
		D:= (p_0+p_{n-1}) A (p_0 +p_{n-1}) = \underset{1}{\overset{p_0}{\bbC}}\oplus \underset{\mu_1\cdots\mu_{n-1}}{\overset{p_{n-1}}{\bbC}}.
	\]
Let $\Gamma_1=(V_1,E_1)$ be the subgraph with $V_1=V$ and $E_1=E\setminus \{e_n\}$; that is, $\Gamma_1$ is a subgraph of $\Gamma_{\Tr}$. Then
	\begin{align*}
		(p_0+p_{n-1})&\cM(\Gamma_0,\mu)(p_0+p_{n-1}) \\
			&\cong \left[(p_0+p_{n-1})\cM(\Gamma_1,\mu)(p_0+p_{n-1})\right] \Asterisk_D \left[(p_0+p_{n-1}) W^*(Y_n, A) (p_0+p_{n-1})\right],
	\end{align*}
and $\varphi$ is a trace on $\cM(\Gamma_1,\mu)$. Now, by \cite{MR3110503} this amalgamated free product is isomorphic to
	\begin{align*}
		 \left[ \underset{1+\mu_1\cdots\mu_{n-2}}{L(\bbF_t)}\oplus \overset{s}{\underset{\mu_1\cdots\mu_{n-2}(\mu_{n-1}-1)}{\C}} \right]  \Asterisk_D \begin{cases} \overset{s_0}{\underset{1-\mu_n^{-1}}{\bbC}} \oplus \underset{\mu_n^{-1},\mu_1\cdots \mu_{n-1}}{M_2(L(\bbZ))}   & \text{if }\mu_n\geq 1\\
					&\\
			  \underset{1,\mu_1\cdots \mu_n}{M_2(L(\bbZ))} \oplus \overset{s_{n-1}}{\underset{\mu_1\cdots \mu_{n-1}(1-\mu_n)}{\bbC}} & \text{otherwise}\end{cases}
	\end{align*}
for some $t>1$, and projections $s\leq p_{n-1}$, $s_0\leq p_0$, and $s_{n-1}\leq p_{n-1}$. We note that when $\mu_n\geq 1$, the amalgam $D$ sits inside the second factor above as:
	\[
		p_0 = 1\oplus \left(\begin{array}{cc} 1 & 0 \\ 0 & 0 \end{array}\right)\qquad p_{n-1} = 0 \oplus \left(\begin{array}{cc} 0 & 0 \\ 0 & 1 \end{array}\right),
	\]
and otherwise $D$ sits inside the second factor as:
	\[
		p_0 = \left(\begin{array}{cc} 1 & 0 \\ 0 & 0 \end{array}\right) \oplus 0 \qquad p_{n-1} =\left(\begin{array}{cc} 0 & 0 \\ 0 & 1 \end{array}\right) \oplus 1.
	\]
We will argue that $D$ is always freely complemented in $(p_0+p_{n-1})\cM(\Gamma_1,\mu)(p_0+p_{n-1})$.  Our analysis depends whether or not $\mu_{n-1} \geq 1$.	

\begin{claim}
Assume that $\mu_{n-1} \geq 1$. Then there exists a finite diffuse von Neumann algebra, $B$, which is either $L(\Z)$ or an interpolated free group factor, and satisfies
\[
\left[(p_0+p_{n-1})\cM(\Gamma_1,\mu)(p_0+p_{n-1})\right] = D * \left[\underset{\mu_1\cdots\mu_{n-2}}B\oplus \underset{\mu_1\cdots \mu_{n-1} - (\mu_1\cdots \mu_{n-2} -1)}{\bbC}\right]
\]
i.e.
\[
		\underset{1+\mu_1\cdots\mu_{n-2}}{L(\bbF_t)}\oplus \overset{s}{\underset{\mu_1\cdots\mu_{n-2}(\mu_{n-1}-1)}{\C}} \cong \left[ \underset{1}{\bbC}\oplus \underset{\mu_1\cdots \mu_{n-1}}{\bbC}\right] * \left[\underset{\mu_1\cdots\mu_{n-2}}B\oplus \underset{\mu_1\cdots \mu_{n-1} - (\mu_1\cdots \mu_{n-2} -1)}{\bbC}\right]
\]
\end{claim}

\begin{proof}[Proof of Claim]
We first note that if such a $B$ were to exist, then the mass on the projection $s$ would be as claimed by \cite[Proposition 2.4]{MR1201693}.  To get the existence of $B$, we will argue that our parameter $t$ is sufficiently large.

For notational simplicity, we let $a$, $b$, and $c$ be the following parameters:
\[
	a = \frac{1}{1 + \mu_{1}\cdots \mu_{n-1}} \qquad b = \frac{\mu_{1}\cdots\mu_{n-2}}{1 + \mu_{1}\cdots \mu_{n-1}} \qquad c = \frac{\mu_{1}\cdots\mu_{n-2}(\mu_{n-1} - 1)}{1 + \mu_{1}\cdots\mu_{n-1}}
\]
Note that $a + b + c = 1$, and we are seeking $B$ satisfying
\[
\underset{a+b}{L(\F_{t})} \oplus \underset{c}{\C} = \left[ \underset{a}{\C} \oplus \underset{b+c}{\C} \right] *  \left[ \underset{b}{B} \oplus \underset{a+c}{\C} \right]
\]
Assume that $B$ has a generating set of free dimension $t_{B}$.  Using Dykema's free dimension calculations in \cite{MR1201693}, the left hand side of the equation has a generating set of free dimension $t(a+b)^{2} + 2(a+b)c = t(a+b)^{2} + 2(a+b)(1-(a+b))$ and the right hand side has a generating set of free dimension $2a(1-a) + 2b(1-b) + t_{B}b^{2}$.  Therefore, if we set 
\[
 t(a+b)^{2} + 2(a+b)(1-(a+b)) = 2a(1-a) + 2b(1-b) + t_{B}b^{2},
\]
we obtain that
\[
t_{B} = \frac{1}{b^{2}}[t(a+b)^{2} - 4ab].
\]
If we can show that $t_{B}$ bounded below by 1, then $B$ can be chosen to be $L*(\Z)$ or an interpolated free group factor, therefore eliminating the possibility of more than one atomic component in the free product.  

Our assumption on $\vphi$ is that (up to a scalar multiple) $\vphi(p_{0}) = a$, $\vphi(p_{n-2}) = b$, and $\vphi(p_{n-1}) = b+c$.  From our assumptions on $\mu_{1},\cdots, \mu_{n}$, it follows that $\vphi(p_{k}) \geq a$ for all $k$.  Note that 
	\[
		p_{n-2}\cM(\Gamma_{1}, \mu)p_{n-2} \cong \left[ \underset{\vphi(p_{n-3})}{L(\F_{t'})} \oplus \underset{b - \vphi(p_{n-3})}{\C}\right] * L(\Z)
	\]
where $t' \geq 1$, and it is understood that the $\C$ disappears if $b - \vphi(p_{n-3}) \leq 0$.  It follows that $p_{n-2}\cM(\Gamma_{1}, \mu)p_{n-2}$ is an interpolated free group factor, and its parameter is minimized when $\vphi(p_{n-3}) = a$, and $t' = 1$ (Note that this occurs when $\Gamma_0$ has exactly three vertices.).  In this case, $p_{n-2}\cM(\Gamma_{1}, \mu)p_{n-2} \cong L(\F_{t''})$ where 
$$
t'' = 1 + \left(1 - \frac{(b-a)^{2}}{b^{2}}\right).
$$
The $L(\F_{t})$ component of $(p_{0} + p_{n-1})\cM(\Gamma_{1}, \mu)(p_{0} + p_{n-1})$ is obtained by amplifying $p_{n-2}\cM(\Gamma_{1}, \mu)p_{n-2}$ by a projection of trace $\frac{a+b}{b}$.  The amplification formula therefore yields
$$
t = 1 + \frac{b^{2}}{(a+b)^{2}} \left(1 - \frac{(b-a)^{2}}{b^{2}}\right) = 1 + \frac{b^{2}}{(a+b)^{2}} - \frac{(b-a)^{2}}{(a+b)^{2}}.
$$ 
The minimal value of $t_{B}$ is therefore
	\begin{align*}
		t_{B} &= \frac{1}{b^{2}}\left( \left[1 + \frac{b^{2}}{(a+b)^{2}} - \frac{(b-a)^{2}}{(a+b)^{2}}\right](a+b)^{2} - 4ab \right)\\
			&= \frac{1}{b^{2}}((a+b)^{2} +b^{2} - (b-a)^{2} - 4ab) = 1,
	\end{align*}
as claimed.\renewcommand{\qedsymbol}{$\blacksquare$}
\end{proof}

\noindent{\underline{\textbf{Case 1:}}} Assume $\mu_{n-1}\geq 1$. Claim 1 implies that---up to an inner-automorphism---we have
	\[
		\underset{\mu_1\cdots\mu_{n-2}}{L(\bbF_t)}\oplus \overset{s}{\underset{\mu_1\cdots\mu_{n-2}(\mu_{n-1}-1)}{\C}} \cong D * \left[\underset{\mu_1\cdots\mu_{n-2}}B\oplus \underset{\mu_1\cdots \mu_{n-1} - (\mu_1\cdots \mu_{n-2} -1)}{\bbC}\right].
	\]
Thus, by \cite[Proposition 4.1]{Hou07} the above amalgmated free product collapses to the following scalar free product:
	\[
		\left[\underset{\mu_1\cdots\mu_{n-2}}B\oplus \underset{\mu_1\cdots \mu_{n-1} - (\mu_1\cdots \mu_{n-2} -1)}{\bbC}\right] * \begin{cases} \overset{s_0}{\underset{1-\mu_n^{-1}}{\bbC}} \oplus \underset{\mu_n^{-1},\mu_1\cdots \mu_{n-1}}{M_2(L(\bbZ))}   & \text{if }\mu_n\geq 1\\
					&\\
			  \underset{1,\mu_1\cdots \mu_n}{M_2(L(\bbZ))} \oplus \overset{s_{n-1}}{\underset{\mu_1\cdots \mu_{n-1}(1-\mu_n)}{\bbC}} & \text{otherwise}\end{cases}.
	\]
We shall consider the above cases separately, and we will understand these free products using the standard Dykema method of building up via various inclusions.\\

\noindent\underline{\textbf{Case 1.a:}} Assume $\mu_{n-1}\geq 1$ and $\mu_n\geq 1$. Consider the following von Neumann subalgebras of $\cM(\Gamma_0,\mu)$:
	\begin{align*}
		(\cN_1,\vphi)&:= \left[\underset{\mu_1\cdots\mu_{n-2}}B\oplus \underset{\mu_1\cdots \mu_{n-1} - (\mu_1\cdots \mu_{n-2} -1)}{\bbC}\right] * \left[ \overset{s_0}{\underset{1-\mu_n^{-1}}{\bbC}} \oplus \overset{r}{\underset{\mu_{n}^{-1} + \mu_1\cdots \mu_{n-1}}{\bbC}}\right]\\
		(\cN_2,\vphi)&:= \left[\underset{\mu_1\cdots\mu_{n-2}}B\oplus \underset{\mu_1\cdots \mu_{n-1} - (\mu_1\cdots \mu_{n-2} -1)}{\bbC}\right] *   \left[ \overset{s_0}{\underset{1-\mu_n^{-1}}{\bbC}} \oplus \overset{r-p_{n-1},p_{n-1}}{\underset{\mu_n^{-1}, \mu_1\cdots \mu_{n-1}}{M_2(\bbC)}}\right].
	\end{align*}
By \cite[Proposition 2.4]{MR1201693}, for some $t>1$ we have
	\[
		(\cN_1,\vphi) \cong \underset{\mu_1\cdots \mu_{n-2} + 1 -\mu_n^{-1}}{(L(\bbF_t),\tau)} \oplus \underset{\mu_n^{-1} + \mu_1\cdots \mu_{n-1} - \mu_1\cdots \mu_{n-2}}{\bbC}.
	\]
Thus for some (other) $t>1$ we have
	\[
		(r\cN_1 r,\vphi^r) \cong \underset{\mu_1\cdots \mu_{n-2}}{(L(\bbF_t),\tau)} \oplus \underset{\mu_n^{-1} + \mu_1\cdots \mu_{n-1} - \mu_1\cdots \mu_{n-2}}{\bbC}.
	\]
Now, this along with Lemma~\ref{lem:Dykema} and \cite[Theorem 3.1]{Hou07} implies
	\[
	 	(r\cN_2 r,\vphi^r) \cong (r \cN_1 r,\vphi^r) * \underset{\mu_n^{-1}, \mu_1\cdots \mu_{n-1}}{M_2(\bbC)} \cong (T_\lambda,\vphi_\lambda),
	\]
with $\lambda = (\mu_{1}\cdots \mu_{n})^{-1}$. Since $p_{n-1}\leq r$, it follows from Lemma~\ref{lem:antman1} that $(p_{n-1} \cN_2 p_{n-1},\vphi^{p_{n-1}})\cong (T_\lambda, \vphi_\lambda)$. Thus, applying Lemma~\ref{lem:Dykema} again along with free absorption yields
	\[
		(p_{n-1}\cM(\Gamma_0,\mu)p_{n-1}, \vphi^{p_{n-1}}) \cong (p_{n-1} \cN_2 p_{n-1},\vphi^{p_{n-1}}) * (L(\bbZ),\tau) \cong (T_\lambda, \vphi_\lambda).
	\]
Setting $z = \bigvee_{e \in E_0} u_{e}u_{e}^{*}$ so that $z = z(p_{n-1}\colon \cM(\Gamma_0, \mu)^\vphi)=z(p_{n-1}\colon \cM(\Gamma_0, \mu))$ by Remark~\ref{rem:central_support_same_in_centralizer},  we deduce $(z\cM(\Gamma_0, \mu),\vphi^z) \cong (T_{\lambda}, \vphi_{\lambda})$ from Lemma~\ref{lem:antman2}.  The formula for  $(\cM(\Gamma_0, \mu),\vphi)$ follows from Remark~\ref{rem:atomic}.\\

\noindent
\underline{\textbf{Case 1.b:}} Assume  $\mu_{n-1}\geq1$ and $\mu_n<1$. Consider the following subalgebras of $\cM(\Gamma_0,\mu)$:
	\begin{align*}
		(\cN_1,\vphi)&:= \left[\underset{\mu_1\cdots\mu_{n-2}}B\oplus \underset{\mu_1\cdots \mu_{n-1} - (\mu_1\cdots \mu_{n-2} -1)}{\bbC}\right] * \left[ \overset{r}{\underset{1+\mu_1\cdots\mu_n}{\bbC}}\oplus\overset{s_{n-1}}{\underset{\mu_1\cdots\mu_{n-1} - \mu_1\cdots \mu_n}{\bbC}}\right]\\
		(\cN_2,\vphi)&:= \left[\underset{\mu_1\cdots\mu_{n-2}}B\oplus \underset{\mu_1\cdots \mu_{n-1} - (\mu_1\cdots \mu_{n-2} -1)}{\bbC}\right] *   \left[\overset{p_0, r-p_0}{\underset{1,\mu_1\cdots \mu_n}{M_2(\bbC)}} \oplus \overset{s_{n-1}}{\underset{\mu_1\cdots\mu_{n-1} - \mu_1\cdots \mu_n}{\bbC}}\right].
	\end{align*}
We see that $(r\cN_{2}r,\vphi^r) \cong (r\cN_{1} r,\vphi^r) * \underset{1,\mu_1\cdots \mu_n}{M_2(\bbC)}$ from Lemma~\ref{lem:Dykema}.  Consider the four weightings 
	\[
		\mu_1\cdots\mu_{n-2},\ \ \ \mu_1\cdots \mu_{n-1} - (\mu_1\cdots \mu_{n-2} -1),\ \ \  1+\mu_1\cdots\mu_n,\ \ \  \text{and } \mu_1\cdots\mu_{n-1} - \mu_1\cdots \mu_n
	\]
in the summands of $\cN_{1}$.  

\underline{\textbf{Case 1.b.i:}} Assume $\mu_{n-1}\geq1$, $\mu_n <1$, and either $\mu_1\cdots\mu_{n-2}$ or $\mu_1\cdots\mu_{n-1} - \mu_1\cdots \mu_n$ is maximal amongst these weightings. Then $(r\cN_{1} r,\vphi^r) \cong (L(\F_t),\tau)$ for some $t\geq1$.  In this case, \cite[Theorem 6.7]{Shl97} and free absorption yields
	\[
		(r\cN_{2}r,\vphi^r) \cong L(\F_{t}) * \underset{1,\mu_1\cdots \mu_n}{M_2(\bbC)} \cong (T_{\lambda}, \vphi_{\lambda})
	\]
with $\lambda = (\mu_{1}\cdots \mu_{n})^{-1}$. Since $p_0\leq r$, by Lemma~\ref{lem:antman1} we have $(p_{0}\cN_{2}p_{0}, \vphi^{p_{0}}) \cong (T_{\lambda}, \vphi_{\lambda})$.  From Lemma~\ref{lem:Dykema} and free absorption it follows that 
	\[
		(p_{0}\cM(\Gamma_0,\mu)p_{0}, \vphi^{p_{0}}) \cong (p_{0}\cN_{2}p_{0}, \vphi^{p_{0}}) * (L(\Z),\tau) \cong (T_{\lambda}, \vphi_{\lambda})
	\]
Again, setting $z = \bigvee_{e \in E_0} u_{e}u_{e}^{*}$ so that $z =z(p_0\colon\cM(\Gamma_0,\mu)^\vphi)= z(p_{0}\colon \cM(\Gamma_0, \mu))$ by Remark~\ref{rem:central_support_same_in_centralizer}, we deduce that $(z\cM(\Gamma_0, \mu),\vphi^z) \cong (T_{\lambda}, \vphi_{\lambda})$ from Lemma~\ref{lem:antman2}.  The formula for  $(\cM(\Gamma_0, \mu),\vphi)$ follows from Remark~\ref{rem:atomic}. 

\underline{\textbf{Case 1.b.ii:}} Assume $\mu_{n-1}\geq 1$, $\mu_n <1$, and either $\mu_1\cdots \mu_{n-1} - (\mu_1\cdots \mu_{n-2} -1)$ or $1+\mu_1\cdots \mu_n$ is maximal amongst the weightings.
 Then $r\cN_{1}r \cong L(\F_{t}) \oplus \underset{\mu_{1}\cdots\mu_{n} + 1 - \mu_{1}\cdots\mu_{n-2}}{\C}$ for some $t\geq1$. 
  Note that $\mu_{1}\cdots\mu_{n} + 1 - \mu_{1}\cdots\mu_{n-2} \leq \mu_{1}\cdots\mu_{n}$. 
   It follows that there exists a trace preserving inclusion of $\underset{\alpha}{\C} \oplus \underset{\beta}{\C}$ into $r\cN_{1} r$ satisfying  $\alpha+\beta=1+\mu_1\cdots \mu_n$ and $1 \leq \alpha \leq \beta \leq \mu_{1}\cdots\mu_{n}$. 
              Then \cite[Theorem 4.3]{Hou07} gives
	\begin{align*}
		(r\cN_{2}r,\vphi^r) &\cong (r\cN_{1}r,\vphi^r) * \underset{1,\mu_1\cdots \mu_n}{M_2(\bbC)}\cong (r\cN_{1}r,\vphi^r) * \underset{1,\mu_1\cdots \mu_n}{M_2(\bbC)} * (T_{\lambda}, \vphi_{\lambda})\\
			&\cong \left( \underset{\mu_1\cdots\mu_{n-2}}{L(\F_{s_{2}})} \oplus \underset{\mu_{1}\cdots\mu_{n} + 1 - \mu_{1}\cdots\mu_{n-2}}{\C}\right) * \underset{1,\mu_1\cdots \mu_n}{M_2(\bbC)} * (T_{\lambda}, \vphi_{\lambda})\cong (T_{\lambda}, \vphi_{\lambda})
	\end{align*}
Continuing as in Case 1.b.i gives $(p_{0}\cM(\Gamma_0,\mu)p_{0}, \vphi^p_{0}) \cong (T_{\lambda}, \vphi_{\lambda})$, hence $(z\cM(\Gamma_0, \mu), \vphi^z) \cong (T_{\lambda}, \vphi_{\lambda})$, and so the formula for $(\cM(\Gamma_0, \mu), \vphi)$ follows from Remark~\ref{rem:atomic}.\\

This concludes Case 1. To address the case when $\mu_{n-1} < 1$, we first require the following claim:

\begin{claim}
Assume that $\mu_{n-1} < 1$.  There exists a finite diffuse von Neumann algebra, $B$, which is either $L(\Z)$ or an interpolated free group factor, and satisfies
	\[
		\left[(p_0+p_{n-1})\cM(\Gamma_1,\mu)(p_0+p_{n-1})\right] = D * \left[\underset{\mu_1\cdots\mu_{n-1}}B\oplus \underset{1}{\bbC}\right]
	\]
\end{claim}

\begin{proof}
Note that by hypothesis, we have that $\left[(p_0+p_{n-1})\cM(\Gamma_1,\mu)(p_0+p_{n-1})\right] \cong L(\F_{t})$ for some $t > 1$.  We are therefore seeking $B$ having the property
	\[
		L(\F_{t}) = \left[\underset{1}{\C} \oplus \underset{\mu_{1}\cdots\mu_{n-1}}{\C}\right] *  \left[\underset{\mu_1\cdots\mu_{n-1}}B\oplus \underset{1}{\bbC}\right].
	\]
We note that if such a $B$ were to exist, then the free product would contain no minimal projections, and so the above formula holds automatically.  Hence, in order to guarantee existence of this $B$ it suffices to show that $t$ is sufficiently large.
Assume that $B$ has a set of generators of free dimension $t_{B}$, and set $a = \frac{1}{1 + \mu_{1}\cdots\mu_{n-1}}$ and $b = \frac{\mu_{1}\cdots\mu_{n-1}}{1 + \mu_{1}\cdots\mu_{n-1}}$.  If the above free product were to hold, then a value for $t$ satisfies
	\[
		t = (1 - a^{2} - b^{2}) + (1 + (t_{B} - 1)b^{2} - a^{2});
	\]
that is, 
	\[
		t_{B} = \frac{t + 2(a^{2} + b^{2} - 1)}{b^{2}}.
	\]
We will now show that $t$ will be large enough to ensure that $t_{B} \geq 1.$

Note that
	\[
		p_{n-2}\cM(\Gamma_{1}, \mu)p_{n-2} \cong \left[\underset{\vphi(p_{n-3})}{L(\F_{s})} \oplus \underset{\vphi(p_{n-2}) - \vphi(p_{n-3})}{\C}\right] *  \left[\underset{\vphi(p_{n-1})}{L(\Z)} \oplus \underset{\vphi(p_{n-2}) - \vphi(p_{n-1})}{\C}\right],
	\]
so the above value for $t$ is minimized in the case that $s=1$ and $\vphi(p_{n-3}) = \vphi(p_0)=1$.  In this case, Dykema's free dimension calculations give
	\begin{align*}
		p_{n-2}&\cM(\Gamma_{1}, \mu)p_{n-2}\\
		 &\cong \begin{cases}
\underset{1 + \mu_{1}\cdots\mu_{n-1}}{L(\F(1 + 2ab))} \oplus \underset{\mu_{1}\cdots\mu_{n-2} - (1 + \mu_{1}\cdots\mu_{n-1})}{\C} &\text{ if } 1 + \mu_{1}\cdots\mu_{n-1} \leq \mu_{1}\cdots\mu_{n-2} \\
L\left(\F\left[2 - \frac{((\vphi(p_{n-2}) - 1)^{2} + (\vphi(p_{n-2}) - \mu_{1}\cdots\mu_{n-1})^{2}}{\vphi(p_{n-2})^{2}}\right]\right) &\text{ otherwise }
\end{cases}.
	\end{align*}
If the top case occurs, then $t=1 + 2ab$ since the identity of the interpolated free group factor component has mass $1+\mu_1\cdots \mu_{n-1}$.  In this case,
	\[
		t_{B} = \frac{1 + 2ab + 2(a^{2} + b^{2} - 1)}{b^{2}} = \frac{(a + b)^{2} - 1 + a^{2} + b^{2}}{b^{2}} = \frac{a^{2} + b^{2}}{b^{2}} > 1.
	\]
If the bottom case occurs, then we find $t$ by taking a $\frac{1 + \mu_{1}\cdots\mu_{n-1}}{\vphi(p_{n-2})}$ amplification of $p_{n-2}\cM(\Gamma_{1}, \mu)p_{n-2}$.  The amplification formula gives
	\begin{align*}
		t &= 1 + \frac{\vphi(p_{n-2})^{2}}{(1 + \mu_{1}\cdots\mu_{n})^{2}} - \frac{((\vphi(p_{n-2}) - 1)^{2} + (\vphi(p_{n-2}) - \mu_{1}\cdots\mu_{n-1})^{2}}{(1 + \mu_{1}\cdots\mu_{n})^{2}}
	\end{align*}
The values for $\vphi(p_{n-2})$ lie in $[\mu_{1}\cdots\mu_{n}, \mu_{1}\cdots\mu_{n} + 1]$, and one checks that this expression is minimized when $\vphi(p_{n-2}) = \mu_{1}\cdots\mu_{n-1}$.  In this case, we have
	\[
		t = 1 + \frac{2\mu_{1}\cdots\mu_{n-1} - 1}{(1 + \mu_{1}\cdots\mu_{n-1})^{2}} = 1 + 2ab - a^{2}.
	\]
This means that 
	\[
t_{B} =  \frac{1 + 2ab - a^{2} + 2(a^{2} + b^{2} - 1)}{b^{2}} = \frac{b^{2}}{b^{2}} = 1.
	\]
Therefore, in either case, $B$ exists.\renewcommand{\qedsymbol}{$\blacksquare$}
\end{proof}

\noindent
\underline{\textbf{Case 2:}} Assume $\mu_{n-1}<1$. By Claim 2 and \cite[Proposition 4.1]{Hou07}, we have that
	\begin{align*}
		((p_0+p_{n-1})\cM(\Gamma_0,\mu)&(p_0+p_{n-1}), \vphi^{p_0+p_{n-1}}) \\
			&\cong \left[ \underset{\mu_1\cdots\mu_{n-1}}{B}\oplus \underset{1}{\bbC}\right] *  
			\begin{cases} 
				\overset{s_0}{\underset{1-\mu_n^{-1}}{\bbC}} \oplus \overset{r-p_{n-1},p_{n-1}}{\underset{\mu_n^{-1}, \mu_1\cdots \mu_{n-1}}{M_2(L(\bbZ))}} & \text{if }\mu_n\geq 1\\
				\overset{p_0, r-p_0}{\underset{1,\mu_1\cdots \mu_n}{M_2(L(\bbZ))}} \oplus \overset{s_{n-1}}{\underset{\mu_1\cdots \mu_{n-1}(1-\mu_n)}{\bbC}} &\text{ otherwise}
			\end{cases}.
	\end{align*}
As in Case 1, our analysis will depend on $\mu_n$.\\

\noindent
\underline{\textbf{Case 2.a:}} Assume $\mu_{n-1}<1$ and $\mu_n\geq 1$. Consider the following subalgebras of $\cM(\Gamma_0,\mu)$:
	\begin{align*}
		(\cN_1,\vphi)&:=\left[\underset{\mu_1\cdots\mu_{n-1}}B\oplus \underset{1}{\bbC}\right]* \left[ \overset{s_0}{\underset{1-\mu_n^{-1}}{\bbC}} \oplus \overset{r}{\underset{\mu_{n}^{-1} + \mu_1\cdots \mu_{n-1}}{\bbC}}\right]\\
		(\cN_2,\vphi)&:= \left[\underset{\mu_1\cdots\mu_{n-1}}B\oplus \underset{1}{\bbC}\right] *   \left[ \overset{s_0}{\underset{1-\mu_n^{-1}}{\bbC}} \oplus \overset{r-p_{n-1},p_{n-1}}{\underset{\mu_n^{-1}, \mu_1\cdots \mu_{n-1}}{M_2(\bbC)}}\right].
	\end{align*}
From \cite{MR1201693}, we see that for some $t\geq 1$
	\[
		r\cN_{1}r \cong \underset{1 + \mu_{1}\cdots \mu_{n-1} - \mu_{n}^{-1}}{L(\F_{t})} \oplus \underset{\mu_{n}^{-1}}{\C}
	\]
so that there is a state preserving inclusion of $\underset{\alpha}{\C} \oplus \underset{\beta}{\C}$ into $r\cN_{1}r$ with $\mu_{n}^{-1} \leq \alpha \leq \beta \leq \mu_{1}\cdots\mu_{n-1}$.  Using the arguments in Case 1.b.ii above, we see that
	\[
		(r\cN_{2}r, \vphi^{r})  \cong \left[\underset{1 + \mu_{1}\cdots \mu_{n-1} - \mu_{n}^{-1}}{L(\F_{t})} \oplus \underset{\mu_{n}^{-1}}{\C}\right] * \underset{\mu_{1}\cdots\mu_{n-1}, \mu_{n}^{-1}}{M_{2}(\C)} \cong (T_{\lambda}, \vphi_{\lambda})
	\]
Consequently $(p_{n-1}\cN_{2}p_{n-1}, \vphi^{p_{n-1}}) \cong (T_{\lambda}, \vphi_{\lambda})$ by Lemma~\ref{lem:antman1}.  Then by Lemma~\ref{lem:Dykema} it follows that 
	\[
		(p_{n-1}\cM(\Gamma_0,\mu)p_{n-1}, \vphi^{p_{n-1}}) \cong (p_{n-1}\cN_{2}p_{n-1}, \vphi^{p_{n-1}}) * (L(\Z),\tau) \cong (T_{\lambda}, \vphi_{\lambda})
	\]
Proceeding as in the previous cases, we obtain $(z\cM(\Gamma_0, \mu), \vphi^{z})  \cong (T_{\lambda}, \vphi_{\lambda})$ and the formula for $(\cM(\Gamma_0, \mu), \vphi)$ follows from Remark~\ref{rem:atomic}.\\

\noindent
\underline{\textbf{Case 2.b:}} Assume $\mu_{n-1}<1$ and $\mu_{n} < 1$. Consider the following subalgebras of $\cM(\Gamma_0,\mu)$:
\begin{align*}
		(\cN_1,\vphi)&:= \left[\underset{\mu_1\cdots\mu_{n-1}}B\oplus \underset{1}{\bbC}\right] * \left[ \overset{r}{\underset{1+\mu_1\cdots\mu_n}{\bbC}}\oplus\overset{s_{n-1}}{\underset{\mu_1\cdots\mu_{n-1} - \mu_1\cdots \mu_n}{\bbC}}\right]\\
		(\cN_2,\vphi)&:= \left[\underset{\mu_1\cdots\mu_{n-1}}B\oplus \underset{1}{\bbC}\right] *   \left[\overset{p_0, r-p_0}{\underset{1,\mu_1\cdots \mu_n}{M_2(\bbC)}} \oplus \overset{s_{n-1}}{\underset{\mu_1\cdots \mu_{n-1}(1-\mu_n)}{\bbC}}\right]\\
		(\cN_3,\vphi)&:=\left[\underset{\mu_1\cdots\mu_{n-1}}B\oplus \underset{1}{\bbC}\right] * \left[\overset{p_0, r-p_0}{\underset{1,\mu_1\cdots \mu_n}{M_2(L(\bbZ))}} \oplus \overset{s_{n-1}}{\underset{\mu_1\cdots \mu_{n-1}(1-\mu_n)}{\bbC}}\right].
	\end{align*}
From \cite{MR1201693}, we see that for some $t\geq 1$
	\[
		r\cN_{1}r \cong \begin{cases}
L(\F_{t}) &\text{ if } 1 + \mu_{1}\dots\mu_{n} \geq \mu_{1}\cdots\mu_{n-1}\\
\underset{\mu_{1}\cdots \mu_{n-1}}{L(\F_{t})} \oplus \underset{\mu_{1}\cdots\mu_{n} + 1 - \mu_{1}\cdots\mu_{n-1} }{\C} &\text{ otherwise }
 \end{cases}
	\]
As $\mu_{1}\cdots\mu_{n} + 1 - \mu_{1}\cdots\mu_{n-1} < \mu_{1}\cdots\mu_{n}$, we see that both possible algebras contains a unital inclusion of $\underset{\alpha}{\C} \oplus \underset{\beta}{\C}$ with $1 \leq \alpha \leq \beta \leq \mu_{1}\cdots\mu_{n}$.  Arguing as in cases 1.b.ii and 2.a, we see that
	\begin{align*}
		(r\cN_{2}r, \vphi^{r})\cong (r\cN_{1}r, \vphi^{r}) * \overset{p_0, r-p_0}{\underset{1,\mu_1\cdots \mu_n}{M_2(\bbC)}}
\cong (T_{\lambda}, \vphi_{\lambda})
	\end{align*}
As before, Lemma~\ref{lem:antman1} implies that $(p_{n-1}\cN_{2}p_{n-1}, \vphi^{p_{n-1}}) \cong (T_{\lambda}, \vphi_{\lambda})$, and Lemma~\ref{lem:Dykema} implies
	\begin{align*}
		(p_{n-1}\cM(\Gamma_0,\mu)p_{n-1}, \vphi^{p_{n-1}}) \cong  (p_{n-1}\cN_{2}p_{n-1}, \vphi^{p_{n-1}}) * (L(\Z),\tau) \cong (T_{\Lambda}, \vphi_{\lambda}).
	\end{align*}
Proceeding as in the previous cases, we obtain $(z\cM(\Gamma_0, \mu), \vphi^{z})  \cong (T_{\lambda}, \vphi_{\lambda})$ and the formula for $(\cM(\Gamma_0, \mu), \vphi)$ follows from Remark~\ref{rem:atomic}.
\end{proof}

\begin{rem}\label{rem:loop}
In the proof of Lemma~\ref{lem:base_case3}, we needed a very special choice of $\Gamma_{\Tr}$ to utilize our free complementation arguments.  But from Proposition~\ref{prop:changestate}, we know $(zM(\Gamma_0, \mu), \vphi) \cong (T_{\lambda}, \vphi_{\lambda})$ regardless of the choice of $\Gamma_{\Tr}$ inducing $\vphi$. This will be crucial in the proof of Lemma~\ref{lem:adding_edge_to_existing_vertices} below.
\end{rem}


\subsection{Under construction}\label{subsec:under_construction}

In the following three lemmas we have a fixed ambient graph $\Gamma=(V,E)$ with edge weighting $\mu$. We let $\vphi$ be the positive linear functional on $\cM(\Gamma,\mu)$ as defined in the beginning of this section.

In this first lemma, we consider constructing a new graph from an existing one by adding a edge whose source is part of the original graph but whose target is not. Provided the von Neumann algebra generated by the original graph is a free Araki--Woods factor (up to direct sum with a finite dimensional abelian von Neumann algebra), then so is the von Neumann algebra generated by the new graph.

\begin{lem}\label{lem:adding_edge_and_vertex}
Let $\Gamma_1 =(V_1,E_1)$ and $\Gamma_2 = (V_2,E_2)$ be subgraphs of $\Gamma$ such that
	\[
		\begin{tikzpicture}
		\draw[dashed, fill={rgb:black,1;white,5}] (0,0) ellipse (1 and 0.5); 
		\node at (0,-.1) {$\Gamma_1$};
		
		\node[left] at (-0.6,1.1) {$\Gamma_2$}; 
		
		\node[below] at (0,0.5) {\scriptsize$v$}; 
		\draw[fill=black] (0,0.5) circle (0.05);
		
		\node[left] at (-0.1,1) {$e$}; 
		\node[right] at (0.1,1) {$e^{\op}$};
		\draw (0,0.5) arc (210:150:1);
		\draw[->] (0,0.5) arc (210:175:1);
		\draw (0,0.5) arc (-30:30:1);
		\draw[->] (0,1.5) arc (30:-5:1);
		
		\node[above] at (0,1.5) {\scriptsize$w$}; 
		\draw[fill=black] (0,1.5) circle (0.05);
		
		\node[right] at (2,1.5) {$\bullet$ $V_2=V_1\sqcup \{w\}$};
		
		\node[right] at (2,0.5) {$\bullet$ $E_2 = E_1\sqcup\{e,e^{\op}\}$ where $s(e)=v\in V_1$ and $t(e)=w$};
		
		\node[right] at (2,-0.5) {$\bullet$ $\vphi(p_w)=\mu(e)\vphi(p_v)$};
		
		\end{tikzpicture}
	\] 
Assume
	\[
		(\cM(\Gamma_1,\mu), \varphi) \cong ( T_H, \vphi_H)\oplus \bigoplus_{u\in V_1} \overset{ r_u}{\bbC},
	\]
where $H=H(\Gamma_1,\mu)$ and $r_u\leq p_u$ exists if and only if $\sum_{\substack{f\in E_1\\ s(f)=u}} \mu(f) < 1$, in which case $ \varphi(r_u) = \varphi(p_u) \left[ 1- \sum_{\substack{f\in E_1\\ s(f)=u}} \mu(f)\right]$. Then
	\[
		(\cM(\Gamma_2, \mu), \varphi) \cong (T_H, \varphi_H) \oplus \bigoplus_{u\in V_2} \overset{s_u}{\bbC},
	\]
where $s_u\leq p_u$ exists if and only if $\sum_{\substack{f\in E_2\\ s(f)=u}} \mu(f)< 1$, in which case $\varphi(s_u) = \varphi(p_u)\left[1 - \sum_{\substack{f\in E_2\\ s(f)=u}} \mu(f)\right]$. In particular, $s_u=r_u$ for all $u\in V_1\setminus \{v\}$.
\end{lem}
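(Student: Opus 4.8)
The plan is to reduce everything, via Remark~\ref{rem:atomic}, to the single assertion that $z\,\cM(\Gamma_2,\mu)\cong(T_H,\vphi_H)$, where $z:=\bigvee_{f\in E_2}u_fu_f^*$, and then to prove that by compressing to the vertex $v$. First, since $w$ is a pendant vertex attached to $\Gamma_1$ only through $e,e^{\op}$, no new loop is created, so $H(\Gamma_2,\mu)=H(\Gamma_1,\mu)=H$; moreover one may take $\Gamma_{\Tr}$ for $\Gamma_2$ to be the one for $\Gamma_1$ together with $\{e,e^{\op}\}$, so that $\vphi$ restricts to the given functional on $\cM(\Gamma_1,\mu)$ and, by Proposition~\ref{prop:edges_elements_are_eigenops}, $Y_e\in\cM(\Gamma_2,\mu)^\vphi$ (the path $e^{\op}$ gives eigenvalue $\mu(e)\mu(e^{\op})=1$). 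As $H$ is non-trivial, $\Gamma_1$ contains a loop, so $\Gamma_2$ is connected with at least two pairs of edges; hence Lemma~\ref{lem:centralizersupport} and Remark~\ref{rem:atomic} apply and give that $(1-z)\cM(\Gamma_2,\mu)=\bigoplus_{u\in V_2}\overset{s_u}{\bbC}$ with $\vphi(s_u)=\vphi(p_u)\bigl[1-\sum_{f\in E_2,\,s(f)=u}\mu(f)\bigr]$ whenever this quantity is positive. Since the edge set at each $u\in V_1\setminus\{v\}$ is unchanged this yields $s_u=r_u$ there, while at $w$ it gives $\vphi(s_w)=\vphi(p_w)(1-\mu(e)^{-1})=\vphi(p_v)(\mu(e)-1)$; so it remains only to prove $(z\,\cM(\Gamma_2,\mu),\vphi^z)\cong(T_H,\vphi_H)$.

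For this, by Lemma~\ref{lem:antman2} (applicable since $\vphi$ is almost periodic, being generated by eigenoperators) it suffices to produce a projection $p\in\cM(\Gamma_2,\mu)^\vphi$ with $z(p:\cM(\Gamma_2,\mu)^\vphi)=z(p:\cM(\Gamma_2,\mu))=z$ and $(p\,\cM(\Gamma_2,\mu)\,p,\vphi^p)\cong(T_H,\vphi_H)$. Using the amalgamation proposition of Section~\ref{sec:adding_weighting_to_edges}, $\cM(\Gamma_2,\mu)=\bigl[\cM(\Gamma_1,\mu)\oplus\bbC p_w\bigr]\Asterisk_{A}W^*(A,Y_e,Y_{e^{\op}})$ with $A=\ell^\infty(V_2)$ (here $\bbC p_w$ is the central summand of $W^*(A,Y_f:f\in E_1)$ on which the $Y_f$, $f\in E_1$, vanish). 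Compressing this amalgamated free product by the minimal projection $p_v\in A$, whose corner of $A$ is just $\bbC p_v$, turns it into the scalar free product
\[
(p_v\cM(\Gamma_2,\mu)p_v,\vphi^{p_v})\ \cong\ (p_v\cM(\Gamma_1,\mu)p_v,\vphi^{p_v})\ *\ (W^*(Y_eY_e^*),\vphi^{p_v}).
\]
Set $\gamma_v:=\sum_{f\in E_1,\,s(f)=v}\mu(f)$, so that $r_v$ exists iff $\gamma_v<1$. By Lemma~\ref{lem:antman1} the first factor above is $(T_H,\vphi_H)$ if $r_v$ does not exist, and $\overset{P}{(T_H,\vphi_H)}\oplus\overset{r_v}{\bbC}$ with $P:=p_v-r_v=\bigvee_{f\in E_1,\,s(f)=v}u_fu_f^*$ if it does (note $P\neq0$ since $\Gamma_1$ connected with a loop forces $v$ to have an edge in $E_1$). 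By the description of generalized circular elements in Section~\ref{sec:adding_weighting_to_edges}, the second factor is $(L(\bbZ),\tau)$ if $\mu(e)\geq1$ and $\overset{u_eu_e^*}{(L(\bbZ),\tau)}\oplus\overset{p_v-u_eu_e^*}{\bbC}$ (with weights $\mu(e),\,1-\mu(e)$) if $\mu(e)<1$.

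Now split into two cases. If $r_v$ does not exist, the free product above is $(T_H,\vphi_H)$ free with a tracial countable direct sum of an interpolated free group factor and (possibly) $\bbC$; writing $(T_H,\vphi_H)\cong(T_H,\vphi_H)*(T_{\lambda_0},\vphi_{\lambda_0})$ for some $\lambda_0\in H\cap(0,1)$ and applying free absorption (\cite[Corollary 5.5]{Shl97} together with the Proposition following Lemma~\ref{lem:mini_absorption}) gives $(p_v\cM(\Gamma_2,\mu)p_v,\vphi^{p_v})\cong(T_H,\vphi_H)$; here take $p=p_v$, noting that $\gamma_v\geq1$ forces $p_v=\bigvee_{f\in E_2,\,s(f)=v}u_fu_f^*\leq z$ (Remark~\ref{rem:atomic}), so $z(p_v:\cM(\Gamma_2,\mu))=z=z(p_v:\cM(\Gamma_2,\mu)^\vphi)$ by Lemma~\ref{lem:centralizersupport} and Remark~\ref{rem:central_support_same_in_centralizer}. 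If $r_v$ exists, compress instead by $P$: applying Lemma~\ref{lem:Dykema} with $\cB(\cH)=\bbC$ (so its minimal projection is $P$), $(A,\phi)=(T_H,\vphi_H)$, $B=\bbC r_v$, and $C=W^*(Y_eY_e^*)$ gives
\[
(P\cM(\Gamma_2,\mu)P,\vphi^P)\ \cong\ \bigl(P\,[(\bbC\oplus\bbC r_v)*W^*(Y_eY_e^*)]\,P,\ \cdot\bigr)\ *\ (T_H,\vphi_H),
\]
and the first factor on the right, a corner of a two-dimensional algebra freely independent from a diffuse-plus-one-atom abelian algebra, is by Dykema's free-product calculations in \cite{MR1201693} a tracial direct sum of an interpolated free group factor and finitely many atoms; so the right-hand side again collapses to $(T_H,\vphi_H)$ by the same free-absorption argument. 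Here $p=P\in\cM(\Gamma_2,\mu)^\vphi$ and $z(P:\cM(\Gamma_2,\mu))=\bigvee_f z(u_fu_f^*:\cM(\Gamma_2,\mu))=z=z(P:\cM(\Gamma_2,\mu)^\vphi)$ by Lemma~\ref{lem:centralizersupport} and Remark~\ref{rem:central_support_same_in_centralizer}. In either case Lemma~\ref{lem:antman2} now yields $(z\,\cM(\Gamma_2,\mu),\vphi^z)\cong(T_H,\vphi_H)$, which with the first paragraph finishes the proof, and the identity $s_u=r_u$ for $u\in V_1\setminus\{v\}$ was observed there.

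The hard part is the bookkeeping in the last paragraph: one must verify that Lemma~\ref{lem:Dykema} genuinely applies in this degenerate form, that all minimal projections produced by Dykema's calculations are tracial and hence are absorbed by $(T_H,\vphi_H)$ without leaving extra atoms, and that the unique surviving atom at $v$ — which lies in the $r_v\wedge(p_v-u_eu_e^*)$ corner, outside $P$ — has exactly the mass $\vphi(p_v)\bigl[1-\sum_{f\in E_2,\,s(f)=v}\mu(f)\bigr]$ forced by Remark~\ref{rem:atomic}, all while keeping every isomorphism state-preserving. Compressing by $p_v\in A$ at the very start is what lets one sidestep the fact that $W^*(A,Y_e,Y_{e^{\op}})$ is non-unital inside $\cM(\Gamma_2,\mu)$ relative to $\ell^\infty(V_1)$.
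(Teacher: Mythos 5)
Your proof is correct and follows essentially the same route as the paper's: compress to $p_v$ to obtain the scalar free product $(p_v\cM(\Gamma_1,\mu)p_v)*(W^*(Y_eY_e^*))$, apply Lemma~\ref{lem:Dykema} to cut down to $q_v=p_v-r_v$, identify the resulting tracial piece via Dykema's computations and absorb it into $(T_H,\vphi_H)$ by free absorption, then recover $z\cM(\Gamma_2,\mu)$ via Lemma~\ref{lem:centralizersupport}, Remark~\ref{rem:central_support_same_in_centralizer}, Lemma~\ref{lem:antman2}, and Remark~\ref{rem:atomic}. The only difference is organizational (you split cases by whether $r_v$ exists rather than by whether $\mu(e)\geq 1$), and the points you flag as needing verification are exactly the ones the paper handles the same way.
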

\begin{proof}
Denote by $\alpha:= \vphi(r_v)$. Observe that
	\begin{align*}
		(p_v \cM(\Gamma_2, \mu) p_v,\vphi^{p_v}) &= (p_v \cM(\Gamma_1, \mu) p_v,\vphi^{p_v}) * (p_v W^*(Y_eY_e^*) p_v,\vphi^{p_v})\\
			&\cong \begin{cases} \left[\underset{\vphi(p_v) - \alpha}{(T_H,\varphi_H)} \oplus\underset{\alpha}{\overset{r_v}{\bbC}}\right] * (L(\bbZ),\tau) &\text{if }\mu(e)\geq 1\\
			&\\
			\left[\underset{\vphi(p_v) - \alpha}{(T_H,\varphi_H)} \oplus\underset{\alpha}{\overset{r_v}{\bbC}}\right] * \left[(L(\bbZ),\tau)\oplus \underset{\varphi(p_v)[1-\mu(e)]}{\bbC}\right]& \text{if }\mu(e)<1\end{cases}.
	\end{align*} 
Consider the case $\mu(e)\geq 1$. Let us denote $q_v:=p_v-r_v$ and:
	\begin{align*}
		(\cN,\vphi)&:= \left[ \overset{q_v}{\underset{ \vphi(p_v)-\alpha }{\bbC}} \oplus \overset{r_v}{\underset{\alpha}{\bbC}}\right] * (L(\bbZ),\tau).
	\end{align*}
We have that $\cN\cong L(\bbF_t)$ for some $t>1$, which implies that $z(q_v\colon p_v\cM(\Gamma_2,\mu)^\varphi p_v)= p_v$. Moreover, by Lemma~\ref{lem:Dykema} and free absorption we have (for some $t'>1$)
	\[
		(q_v \cM(\Gamma_2,\mu) q_v, \vphi^{q_v}) \cong (q_v \cN q_v, \vphi^{q_v}) * (T_H, \varphi_H) \cong (L(\bbF_{t'}),\tau) * (T_H, \varphi_H) \cong (T_H, \varphi_H).
	\]
Thus $(p_v \cM(\Gamma_2,\mu) p_v, \vphi^{p_v}) \cong (T_H,\vphi_H)$  by Lemma~\ref{lem:antman2}. 

By Remark~\ref{rem:central_support_same_in_centralizer}, we have
	\[
		z:=z(p_v\colon \cM(\Gamma_2,\mu)^\vphi) = z(p_v\colon \cM(\Gamma_2,\mu))
	\]
and so Lemma~\ref{lem:antman2} implies
	\[
		(z\cM(\Gamma_2,\mu), \vphi^z) \cong (T_H,\vphi_H).
	\]
The formula for $(\cM(\Gamma_2,\mu),\vphi)$ then follows from Remark~\ref{rem:atomic}.

Next we consider the case $\mu(e)<1$. Let us denote:
	\[
		(\cN,\vphi):= \left[ \overset{q_v}{\underset{ \vphi(p_v)-\alpha }{\bbC}} \oplus \underset{\alpha}{\bbC}\right] * \left[\underset{\mu(e)\vphi(p_v)}{(L(\bbZ),\tau)}\oplus \overset{p_v - u_eu_e^*}{\underset{\varphi(p_v)[1-\mu(e)]}{\bbC}}\right].
	\]
Then for some $t>1$ we have
	\[
		(\cN,\vphi)\cong (L(\bbF_t),\tau)\oplus \overset{q_v\wedge(p_v - u_eu_e^*)}{\bbC} \oplus \overset{r_v\wedge(p_v-u_eu_e^*)}{\underset{\alpha - \mu(e)\vphi(p_v)}{\bbC}}.
	\]
By Lemma~\ref{lem:Dykema} and free absorption we have
	\[
		(q_v \cM(\Gamma_2,\mu) q_v,\vphi^{q_v}) \cong (q_v \cN q_v, \varphi^{q_v}) * (T_H,\vphi_H) \cong (T_H,\vphi_H).
	\]
Realize that, by Lemma~\ref{lem:centralizersupport}, we know
	\[
		q_v= p_{v} \cdot \bigvee_{f\in E_1} u_fu_f^*,
	\]
and so
	\[
		z:=z(q_v\colon \cM(\Gamma_2,\mu)) = z(q_v\colon \cM(\Gamma_2,\mu)^\vphi) = \bigvee_{f\in E_2} u_fu_f^*.
	\]
Consequently, $(z \cM(\Gamma,\mu),\vphi^z)\cong (T_H,\vphi_H)$ by Lemma~\ref{lem:antman2}. The formula for $(\cM(\Gamma_2,\mu),\vphi)$ then follows from Remark~\ref{rem:atomic}.
\end{proof}

Note that the hypothesis $\vphi(p_w)= \mu(e)\vphi(p_v)$ in the previous lemma can be guaranteed if $e\in \Gamma_{\Tr}$. Thus, when we begin to build $\Gamma$ from $\Gamma_0$, we will use the previous lemma to first add only edges in $\Gamma_{\Tr}$. Since $V(\Gamma_{\Tr})=V$, to finish building up to $\Gamma$ we will just need to add edges whose source and target are already in place. We consider loops and non-loops separately in the following two lemmas.

\begin{lem}\label{lem:adding_loop_to_existing_vertex}
Let $\Gamma_1 =(V_1,E_1)$ and $\Gamma_2 = (V_2,E_2)$ be subgraphs of $\Gamma$ such that
	\[
		\begin{tikzpicture}
		\draw[dashed, fill={rgb:black,1;white,5}] (0,0) ellipse (1 and 0.5); 
		\node at (0,-.1) {$\Gamma_1$};
		
		\node[left] at (-0.6,1.1) {$\Gamma_2$}; 
		
		\node[below] at (0,0.5) {\scriptsize$v$}; 
		\draw[fill=black] (0,0.5) circle (0.05);
		
		\draw (0,0.5) .. controls (-1.5,2) and (1.5,2) .. (0,0.5); 
		\draw[->] (0.01,1.62) -- (0.05,1.62);
		\node[above] at (0,1.6) {$e$};

		\draw (0,0.5) .. controls (-1.25, 1.75) and (1.25,1.75) .. (0,0.5); 
		\draw[->] (-0.02, 1.43) -- (-0.06,1.43);
		\node[below] at (0,1.4) {$e^{op}$};
		

		\node[right] at (2,1.5) {$\bullet$ $V_2=V_1$};
		
		\node[right] at (2,0.5) {$\bullet$ $E_2 = E_1\sqcup\{e,e^{\op}\}$ where $s(e)=t(e)=v\in V_1$; or};
		
		\node[right] at (2,-0.5) {$\bullet$ $E_2=E_1\sqcup\{e\}$ where $e=e^{op}$ and $s(e)=t(e)=v\in V_1$};
		
		\end{tikzpicture}
	\] 
Assume
	\[
		(\cM(\Gamma_1,\mu), \varphi) \cong ( T_{H'}, \vphi_{H'})\oplus \bigoplus_{u\in V_1} \overset{ r_u}{\bbC},
	\]
where $H'=H(\Gamma_1,\mu)$ and $r_u\leq p_u$ exists if and only if $\sum_{\substack{f\in E_1\\ s(f)=u}} \mu(f) < 1$, in which case $ \varphi(r_u) = \varphi(p_u) \left[ 1- \sum_{\substack{f\in E_1\\ s(f)=u}} \mu(f)\right]$. Then
	\[
		(\cM(\Gamma_2, \mu), \varphi) \cong (T_H, \varphi_H) \oplus \bigoplus_{u\in V_2} \overset{s_u}{\bbC},
	\]
where $H=H(\Gamma_2,\mu)$, $s_u=r_u$ for $u\neq v$, and $s_v=0$.
\end{lem}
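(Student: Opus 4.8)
The plan is to follow the template of the proof of Lemma~\ref{lem:adding_edge_and_vertex}: compress by the vertex projection $p_v$, recognize the compression as a free Araki--Woods factor, and then lift through Lemma~\ref{lem:antman2} (applied twice, first inside $p_v\cM(\Gamma_2,\mu)p_v$ and then inside $\cM(\Gamma_2,\mu)$) and Remark~\ref{rem:atomic}. I would begin with the bookkeeping. Since $E_2\setminus E_1\subseteq\{e,e^{\op}\}$ and each added edge has source $v$, both $\vphi(p_u)$ and $\sum_{s(f)=u}\mu(f)$ are unchanged for $u\neq v$, so $s_u=r_u$ there; at $v$ the sum $\sum_{s(f)=v}\mu(f)$ increases by $\mu(e)+\mu(e)^{-1}\geq 2$ if $e\neq e^{\op}$, and by $1$ if $e=e^{\op}$ (recall $e=e^{\op}$ forces $\mu(e)=1$), so the new sum is $\geq 1$, giving $s_v=0$ and $p_v\leq\bigvee_{f\in E_2}u_fu_f^{*}=:z$. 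Finally every loop of $\Gamma_2$ either avoids $e,e^{\op}$ (hence lies in $\Gamma_1$) or, cut at its traversals of $e$ and $e^{\op}$, decomposes into traversals of $e^{\pm1}$ and loops of $\Gamma_1$ based at $v$; in either case its $\mu$-value lies in $\langle H',\mu(e)\rangle$, so $H:=H(\Gamma_2,\mu)=\langle H',\mu(e)\rangle$ (which equals $H'$ when $e=e^{\op}$). Throughout I use that $H'=H(\Gamma_1,\mu)$ is non-trivial, as it is in every application of this lemma.

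Because $s(e)=t(e)=v$, compressing the amalgamated free product $\cM(\Gamma_2,\mu)=\cM(\Gamma_1,\mu)\Asterisk_A W^*(A,Y_e)$ by the minimal central projection $p_v\in A$ collapses the amalgam to $\bbC p_v$ and leaves $p_vW^*(A,Y_e)p_v=W^*(Y_e)$, so
\[
(p_v\cM(\Gamma_2,\mu)p_v,\vphi^{p_v})=(p_v\cM(\Gamma_1,\mu)p_v,\vphi^{p_v})*(W^*(Y_e),\vphi^{p_v}).
\]
By Proposition~\ref{prop:edges_elements_are_eigenops} (any $\Gamma_{\Tr}$-path from $v$ to $v$ is a $\Gamma_{\Tr}$-loop and so has $\mu$-value $1$), $Y_e$ is a generalized circular element that is an eigenoperator with eigenvalue $\mu(e)$, so $(W^*(Y_e),\vphi^{p_v})\cong(T_{\mu(e)},\vphi_{\mu(e)})$. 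Writing $q_v:=p_v\cdot\bigvee_{f\in E_1}u_fu_f^{*}$ and $r_v:=p_v-q_v$ (the $\Gamma_1$-atom at $v$, of mass $\alpha:=\vphi(r_v)\geq 0$), the hypothesis together with Lemma~\ref{lem:antman1} gives a state-preserving isomorphism $(p_v\cM(\Gamma_1,\mu)p_v,\vphi^{p_v})\cong\overset{q_v}{(T_{H'},\vphi_{H'})}\oplus\overset{r_v}{\bbC}$, whence
\[
(p_v\cM(\Gamma_2,\mu)p_v,\vphi^{p_v})\cong\left[\overset{q_v}{(T_{H'},\vphi_{H'})}\oplus\overset{r_v}{\bbC}\right]*(T_{\mu(e)},\vphi_{\mu(e)}).
\]

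To evaluate this free product I would introduce the subalgebra $\cN:=[\bbC q_v\oplus\bbC r_v]*(W^*(Y_e),\vphi^{p_v})$, which contains $q_v$. When $\mu(e)\neq1$, writing $T_{\mu(e)}\cong(L(\bbZ),\tau)*(\cB(\ell^2(\bbN_0)),\psi_{\mu(e)})$ and using the Dykema computation $[\bbC q_v\oplus\bbC r_v]*(L(\bbZ),\tau)\cong L(\bbF_t)$ (\cite[Theorem~1.1]{MR1201693}) followed by $L(\bbF_t)*(\cB(\ell^2(\bbN_0)),\psi_{\mu(e)})\cong L(\bbF_{t-1})*(T_{\mu(e)},\vphi_{\mu(e)})\cong(T_{\mu(e)},\vphi_{\mu(e)})$ (free absorption), one gets a state-preserving isomorphism $(\cN,\vphi^{p_v})\cong(T_{\mu(e)},\vphi_{\mu(e)})$; when $\mu(e)=1$ the algebra $W^*(Y_e)$ is the tracial factor $L(\bbF_2)$ (resp.\ $L(\bbZ)$ if $e=e^{\op}$) and $\cN\cong L(\bbF_{t'})$ for some $t'>1$ by \cite{MR1201693}. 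In particular $\cN$ is a factor, and the inclusion $\cN\subseteq p_v\cM(\Gamma_2,\mu)p_v$ has the form to which Lemma~\ref{lem:Dykema} applies (with $A=T_{H'}$, $\cB(\cH)=\bbC$ with minimal projection $q_v$, $B=\bbC r_v$, $C=W^*(Y_e)$). Hence $z(q_v\colon p_v\cM(\Gamma_2,\mu)p_v)=z(q_v\colon\cN)=p_v$, and
\[
(q_v\cM(\Gamma_2,\mu)q_v,\vphi^{q_v})\cong(q_v\cN q_v,\vphi^{q_v})*(T_{H'},\vphi_{H'})\cong(T_{\mu(e)},\vphi_{\mu(e)})*(T_{H'},\vphi_{H'})\cong(T_H,\vphi_H),
\]
the second isomorphism being Lemma~\ref{lem:antman1} and the last \cite[Theorem~6.4]{Shl97} (when $\mu(e)=1$ one instead has $q_v\cN q_v\cong L(\bbF_{t''})$, and the last isomorphism is free absorption together with $H=H'$).

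Finally I would upgrade this to $(p_v\cM(\Gamma_2,\mu)p_v,\vphi^{p_v})\cong(T_H,\vphi_H)$ via Lemma~\ref{lem:antman2}, which requires $z\bigl(q_v\colon(p_v\cM(\Gamma_2,\mu)p_v)^{\vphi^{p_v}}\bigr)=p_v$. I expect this to be the main technical point: when $\mu(e)\neq1$ the generator $Y_e$ is not in the centralizer, so the tracial-factor argument of Lemma~\ref{lem:adding_edge_and_vertex} does not apply verbatim. Instead I would note that $\cN$ is globally invariant under $\sigma^{\vphi^{p_v}}$ (which fixes $q_v,r_v$ and scales $Y_e$), so $\cN^{\vphi^{p_v}}$ is a \emph{factor} (equal to $\cN$ itself when $\mu(e)=1$, and isomorphic to $(T_{\mu(e)})^{\vphi_{\mu(e)}}$ otherwise) containing $q_v$, with identity $p_v$, and contained in $(p_v\cM(\Gamma_2,\mu)p_v)^{\vphi^{p_v}}$; since a central projection of a finite von Neumann algebra that dominates a nonzero projection of such a subfactor must dominate the subfactor's identity, this forces $z\bigl(q_v\colon(p_v\cM(\Gamma_2,\mu)p_v)^{\vphi^{p_v}}\bigr)=p_v$. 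With $(p_v\cM(\Gamma_2,\mu)p_v,\vphi^{p_v})\cong(T_H,\vphi_H)$ in hand, I apply Lemma~\ref{lem:antman2} once more at the level of $\cM(\Gamma_2,\mu)$ with the projection $p_v$: Lemma~\ref{lem:centralizersupport}, Remark~\ref{rem:central_support_same_in_centralizer}, and $p_v\leq z$ give $z(p_v\colon\cM(\Gamma_2,\mu)^{\vphi})=z(p_v\colon\cM(\Gamma_2,\mu))=z$, so $(z\cM(\Gamma_2,\mu),\vphi^{z})\cong(T_H,\vphi_H)$, and Remark~\ref{rem:atomic} then yields the asserted direct-sum decomposition with the stated $s_u$.
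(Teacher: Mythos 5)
Your argument follows essentially the same route as the paper's proof: compress by $p_v$, identify $(p_v\cM(\Gamma_2,\mu)p_v,\vphi^{p_v})$ as $\bigl[(T_{H'},\vphi_{H'})\oplus\overset{r_v}{\bbC}\bigr]*(W^*(Y_e),\vphi^{p_v})$ with $W^*(Y_e)$ one of $T_{\mu(e)}$, $L(\bbF_2)$, or $L(\bbZ)$, show the free product is $(T_H,\vphi_H)$, and lift via Lemma~\ref{lem:antman2} and Remark~\ref{rem:atomic}. The paper asserts the free-product identification outright in each of the three cases, whereas you fill it in with the Dykema/compression machinery, including the centralizer central-support point needed to apply Lemma~\ref{lem:antman2} inside $p_v\cM(\Gamma_2,\mu)p_v$; your factor-invariance argument for that point is sound. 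The one sub-step that is off is $L(\bbF_t)*(\cB(\cH),\psi)\cong L(\bbF_{t-1})*(T_{\mu(e)},\vphi_{\mu(e)})$, which requires $t\geq 2$, while here $t=1+2\,\vphi^{p_v}(q_v)\vphi^{p_v}(r_v)\leq 3/2$; the fix is immediate by reassociating $\cN=[\bbC q_v\oplus\bbC r_v]*(L(\bbZ),\tau)*(\cB(\cH),\psi)\cong[\bbC\oplus\bbC]*(T_{\mu(e)},\vphi_{\mu(e)})\cong(T_{\mu(e)},\vphi_{\mu(e)})$, the last isomorphism being Corollary~\ref{cor:absorption} with one-dimensional summands.
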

\begin{proof}
First observe that, by Lemma~\ref{lem:antman1}, we have
	\[
		(p_v \cM(\Gamma_1,\mu)p_v,\vphi^{p_v}) \cong (T_{H'},\vphi_{H'}) \oplus\overset{r_v}{\bbC}.
	\]
Furthermore, by Proposition~\ref{prop:edges_elements_are_eigenops}, we have
	\[
		(W^*(Y_e), \vphi)\cong (p_vW^*(Y_e) p_v, \vphi^{p_v}) \cong \begin{cases} (T_{\mu(e)},\vphi_{\mu(e)}) & \text{if }\mu(e)\neq 1\\ 
			(L(\mathbb{F}_2),\tau) & \text{if }\mu(e)=1\text{ but }e\neq e^{op}\\
			(L(\bbZ),\tau) & \text{if }\mu(e)=1\text{ and }e=e^{op}\end{cases}.
	\] 
In each of the above three cases we therefore have
	\begin{align*}
		(p_v\cM(\Gamma_2,\mu)p_v,\mu),\vphi^{p_v})\cong (p_v \cM(\Gamma_1,\mu)p_v,\vphi^{p_v}) * (p_vW^*(Y_e) p_v, \vphi^{p_v}) \cong (T_H,\vphi_H),
	\end{align*}
where $H=H(\Gamma_2,\mu)$. Then Remark~\ref{rem:atomic} and Lemma~\ref{lem:antman2}  yield the claimed isomorphism.
\end{proof}

\begin{lem}\label{lem:adding_edge_to_existing_vertices}
Let $\Gamma_1 = (V_1,E_1)$ and $\Gamma_2 = (V_2,E_2)$ be subgraphs of $\Gamma$ such that
	\[
		\begin{tikzpicture}
		\draw[dashed, fill={rgb:black,1;white,5}] (0,0) ellipse (1 and 0.5); 
		\node at (0,0) {$\Gamma_1$};
		
		\node[left] at (-0.6,0.8) {$\Gamma_2$}; 
		
		\node[below] at (-0.5,0.433) {\scriptsize$v$}; 
		\draw[fill=black] (-0.5,0.433) circle (0.05);
		
		\node[above] at (0,1.25) {$e$}; 
		\draw (-0.5,0.433) -- (-0.5,0.75);
		\draw (0.5,0.433) -- (0.5,0.75);
		\draw (-0.5,0.75) arc (180:0:0.5);
		\draw[->] (-0.5,0.75) arc (180:85:0.5);
		
		\node[below] at (0,1) {$e^{\op}$}; 
		\draw (0.5,0.5) arc (0:180:0.5);
		\draw[->] (0.5,0.5) arc (0:95:0.5);
		
		\node[below] at (0.5,0.433) {\scriptsize$w$}; 
		\draw[fill=black] (0.5,0.433) circle (0.05);
		
		\node[right] at (2,1) {$\bullet$ $V_2=V_1$};
		
		\node[right] at (2,0) {$\bullet$ $E_2 = E_1\sqcup \{e, e^{\op}\}$ where $s(e)=v$ and $t(e)=w$ and $\mu(e)\geq 1$};
		
		\end{tikzpicture}
	\] 
Assume
	\[
		(\cM(\Gamma_1,\mu), \varphi) \cong ( T_{H'}, \vphi_{H'})\oplus \bigoplus_{u\in V_1} \overset{ r_u}{\bbC},
	\]
where $H'=H(\Gamma_1,\mu)$,  $r_u\leq p_u$ exists if and only if $\sum_{\substack{f\in E_1\\ s(f)=u}} \mu(f) < 1$, in which case $ \varphi(r_u) = \varphi(p_u) \left[ 1- \sum_{\substack{f\in E_1\\ s(f)=u}} \mu(f)\right]$. Then
	\[
		(\cM(\Gamma_2, \mu), \varphi) \cong (T_H, \varphi_H) \oplus \bigoplus_{u\in V_2} \overset{s_u}{\bbC},
	\]
where $H=H(\Gamma_2,\mu)$, $s_u=r_u$ for $u\neq v,w$, $s_v=0$, and $s_w\leq r_w$ exists if and only if $\sum_{\substack{f\in E_2\\ s(f)=w}} \mu(f)< 1$, in which case $\varphi(s_w) = \varphi(p_w)\left[1 - \sum_{\substack{f\in E_2\\ s(f)=w}} \mu(f)\right]$.
\end{lem}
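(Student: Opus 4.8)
The plan is to adapt the proof of Lemma~\ref{lem:adding_edge_and_vertex}, the one change being that—since the target $w$ of the new edge already lies in $V_1$—I would compress $\cM(\Gamma_2,\mu)$ by $r:=p_v+p_w$ rather than by a single vertex projection. Write $q:=u_e^*u_e$; because $\mu(e)\ge1$ we have $u_eu_e^*=p_v$ and $q\le p_w$, with $q=p_w$ exactly when $\mu(e)=1$. Since the tree $\Gamma_{\Tr}$ is added before any non-tree edge we may assume $\Gamma_{\Tr}\subseteq\Gamma_1$, so $p_v$ and $p_w$ both have nonzero parts under the identity $z_1:=\bigvee_{f\in E_1}u_fu_f^*$ of the $(T_{H'},\varphi_{H'})$-summand of $\cM(\Gamma_1,\mu)$. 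The first step is to establish
\[
	(r\cM(\Gamma_2,\mu)r,\varphi^r)\;=\;(r\cM(\Gamma_1,\mu)r,\varphi^r)\;\Asterisk_D\;(rW^*(A,Y_e)r,\varphi^r),\qquad D:=\overset{p_v}{\bbC}\oplus\overset{p_w}{\bbC}.
\]
This follows from $\cM(\Gamma_2,\mu)=\cM(\Gamma_1,\mu)\Asterisk_A W^*(A,Y_e)$ (the amalgamated-freeness statement of Section~\ref{sec:adding_weighting_to_edges}, which is preserved under compression by $r\in A$), together with the observation that the two compressions already generate $r\cM(\Gamma_2,\mu)r$: cutting a reduced word $Y_{f_1}\cdots Y_{f_k}$ (with $s(f_1),t(f_k)\in\{v,w\}$) at each occurrence of $e$ or $e^{\op}$ leaves $\Gamma_1$-subpaths whose endpoints both lie in $\{v,w\}$, because $s(e),t(e),s(e^{\op}),t(e^{\op})\in\{v,w\}$.

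Next I would identify the two amalgamands. By hypothesis and Lemma~\ref{lem:antman1} (applied to the centralizer projection $rz_1=(p_v-r_v)+(p_w-r_w)$, which sits under the identity of $T_{H'}$), $r\cM(\Gamma_1,\mu)r\cong(T_{H'},\varphi_{H'})\oplus\overset{r_v}{\bbC}\oplus\overset{r_w}{\bbC}$. On the other side, $rW^*(A,Y_e)r$ is the graph von Neumann algebra of the loopless two-vertex graph on $\{v,w\}$ with the single edge pair $\{e,e^{\op}\}$, so as an abstract algebra it is $\overset{p_v,q}{M_2(L(\bbZ))}\oplus\overset{p_w-q}{\bbC}$ (the last summand absent when $\mu(e)=1$); by the Corollary to Proposition~\ref{prop:edges_elements_are_eigenops}, $Y_e$ is an eigenoperator of $\Delta_\varphi$ with eigenvalue $\lambda_e:=\mu(e)\mu(\sigma)$, $\sigma\in\Pi_{\Gamma_{\Tr}}$ from $t(e)$ to $s(e)$, so $\varphi$ assigns the diagonal $M_2(L(\bbZ))$-weights $\varphi(p_v)$ and $\varphi(q)=\varphi(p_v)/\lambda_e$ (and the relation $\varphi(q)\le\varphi(p_w)$ is precisely where the hypothesis $\mu(e)\ge1$ enters), while $\langle H',\lambda_e\rangle=H(\Gamma_2,\mu)=:H$.

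The crux is then to free-complement $D$ inside $rW^*(A,Y_e)r$, i.e.\ to write $(rW^*(A,Y_e)r,\varphi^r)\cong(D,\varphi^r)*(C_0,\nu_0)$ with $(C_0,\nu_0)$ tracial of the form $\bbC\oplus\bbC$ or $L(\bbF_t)\oplus\bbC$; this is a free-dimension computation of exactly the kind carried out in the proof of Lemma~\ref{lem:base_case1} and in the two Claims inside the proof of Lemma~\ref{lem:base_case3}, the only subtlety being that $p_w=q+(p_w-q)$ straddles the $M_2(L(\bbZ))$-block and the $\bbC$-summand. In the tracial subcase $\lambda_e=1$ one invokes \cite[Theorem~1.1]{MR1201693} directly; when $\lambda_e\ne1$ the restricted functional is genuinely non-tracial and one uses the non-tracial free-dimension machinery of Houdayer (\cite[Theorem~4.3]{Hou07}, \cite[Proposition~4.1]{Hou07}). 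Granting this, Proposition~\ref{prop:amalgamate} collapses the amalgamated free product to the scalar free product $\bigl[(T_{H'},\varphi_{H'})\oplus\overset{r_v}{\bbC}\oplus\overset{r_w}{\bbC}\bigr]*(C_0,\nu_0)$, which I would then evaluate by the now-standard build-up: introduce $\cN_1$ by replacing $C_0$ with a point mass, show a compression of $\cN_1$ is $L(\bbF_t)\oplus(\text{atom})$ (Lemma~\ref{lem:Dykema}, \cite[Theorem~1.1]{MR1201693}) and hence a further compression is $(T_{H'},\varphi_{H'})$ by Lemma~\ref{lem:antman1}; reinstating the diffuse part of $C_0$ via Lemma~\ref{lem:Dykema}, \cite[Theorem~4.3]{Hou07}, free absorption, and Corollary~\ref{cor:absorption} produces a centralizer projection $P$ with $(P\,r\cM(\Gamma_2,\mu)r\,P,\varphi^P)\cong(T_H,\varphi_H)$.

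Finally, by Lemma~\ref{lem:centralizersupport} and Remark~\ref{rem:central_support_same_in_centralizer}, $P$ has full central support in the $z$-part with $z=\bigvee_{f\in E_2}u_fu_f^*$, so Lemma~\ref{lem:antman2} upgrades this to $(z\,\cM(\Gamma_2,\mu),\varphi^z)\cong(T_H,\varphi_H)$, and Remark~\ref{rem:atomic} reads off $(1-z)\cM(\Gamma_2,\mu)$ and hence the values of the $s_u$: the atom at $v$ is absorbed because $\mu(e)\ge1$ forces $\sum_{s(f)=v}\mu(f)\ge1$, while a new atom $p_w-q$ of mass $\varphi(p_w)(1-\mu(e)^{-1})$ appears at $w$ and merges with $r_w$, giving $s_w\le r_w$; by Proposition~\ref{prop:changestate} (see Remark~\ref{rem:loop}) the particular choice of $\Gamma_{\Tr}$ is irrelevant for the $z$-part. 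I expect the genuinely new difficulty to be the free-complementation step in the non-tracial case $\lambda_e\ne1$—everything downstream is a reassembly of tools already available—while the case $\mu(e)=1$ is strictly easier, since then $q=p_w$, no new atom appears, and $s_w=r_w$.
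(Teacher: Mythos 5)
Your skeleton --- compress by $P=p_v+p_w$, realize the compression as an amalgamated free product over $D=\overset{p_v}{\bbC}\oplus\overset{p_w}{\bbC}$, collapse it via Proposition~\ref{prop:amalgamate}, and finish with Lemmas~\ref{lem:antman1}, \ref{lem:antman2} and Remark~\ref{rem:atomic} --- is the right one, and your endgame agrees with the paper's. The gap is the step you yourself flag as the crux. You propose to free-complement $D$ inside $(PW^*(A,Y_e)P,\vphi^P)$ as $(D,\vphi^P)*(C_0,\nu_0)$ with $(C_0,\nu_0)$ \emph{tracial} (of the form $\bbC\oplus\bbC$ or $L(\bbF_t)\oplus\bbC$). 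When the $\Delta_\vphi$-eigenvalue $\lambda_e=\mu(e)\mu(\sigma)$ of $Y_e$ is $\neq 1$ this is impossible, not merely unestablished: the modular group of a free product state is the free product of the modular groups, so a free product of tracial states is a trace, whereas $\vphi^P$ is genuinely non-tracial on $PW^*(A,Y_e)P$ because $u_e$ lies in that corner and
\[
	\vphi(u_eu_e^*)=\vphi(p_v)\neq \vphi(p_v)\lambda_e^{-1}=\vphi(u_e^*u_e).
\]
Nor is there any ``non-tracial free-dimension machinery'' in \cite{Hou07} to substitute: Theorem~4.3 there is an absorption statement and Proposition~4.1 is the collapse result that \emph{presupposes} a free complementation. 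Free-complementing $D$ on the other side, in $P\cM(\Gamma_1,\mu)P\cong(T_{H'},\vphi_{H'})\oplus\overset{r_v}{\bbC}\oplus\overset{r_w}{\bbC}$, is equally out of reach, since the hypothesis only provides the abstract state-preserving isomorphism class of that compression, not a free decomposition relative to the positions of $p_v$ and $p_w$.

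The paper circumvents exactly this obstruction (see Remark~\ref{rem:loop}): all of its free-complementation arguments take place in \emph{tracial} corners, where Dykema's free-dimension computations from \cite{MR1201693} apply. Concretely, it replaces the generic $\Gamma_1$ by an explicit auxiliary graph $\Gamma_1'$ (two new vertices $0,1$ joined by parallel edges $e_1,\ldots,e_n$, attached to $v$ and $w$ by single edges $f_v,f_w$), with weights chosen so that $(P\cM(\Gamma_1',\mu')P,(\vphi')^P)\cong(P\cM(\Gamma_1,\mu)P,\vphi^P)$ compatibly with $D$; amalgamated freeness over $D$ then gives $P\cM(\Gamma_2,\mu)P\cong P\cM(\Gamma_2',\mu')P$. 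The four-cycle $f_v e_1 f_w^{\op}e^{\op}$ is handled by Lemma~\ref{lem:base_case3}, whose internal free complementation lives in the compression of a path inside $\Gamma_{\Tr}$ (hence tracial), and the remaining parallel edges $e_2,\ldots,e_n$ are added by the argument of Lemma~\ref{lem:switcheroo}; a separate case, using Proposition~\ref{prop:changestate}, deals with the auxiliary four-cycle having weight $1$. So while your reduction to an amalgamated free product over $D$ and your final bookkeeping of the atoms $s_u$ are sound, the middle of the argument cannot be run as you describe; it must be rerouted so that the free complementation occurs in a tracial subalgebra.
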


\begin{proof}
We will compute the compression of $\cM(\Gamma_2,\mu)$ by $p_v+p_w$. We will use a bit  of \emph{sleight of hand} to modify $\Gamma_2$ by swapping out the generic $\Gamma_1$ for a more controlled graph. Set $\alpha:=\vphi(r_v)$ and $\beta:=\vphi(r_w)$. Consider the graph $\Gamma_2'=(V_2', E_2',)$  such that
	\[
		\begin{tikzpicture}
		\node[left] at (-1.6,1.7) {$\Gamma_2'$};

		\draw (-2,0) arc (185:-5:2); 
		\draw[->] (-2,0) arc (185:87.5:2);
		\node[above] at (0,2.15) {$e$};

		\draw (-2,0) arc (170:10:2.031); 
		\draw[->] (2,0) arc (10:92.5:2.031);
		\node[above] at (0,1.6) {$e^{\op}$};

		\node[left] at (-2,0) {\scriptsize$v$}; 
		\draw[fill=black] (-2,0) circle (0.05);
		
		\node[above] at (-1.5,0.288675) {$f_v$}; 
		\draw (-2,0) arc (150:30:0.57735);
		\draw[->] (-2,0) arc (150:85:0.57735);
		\node[below] at (-1.5,-0.288675) {$f_v^{\op}$};
		\draw (-2,0) arc (-150:-30:0.57735);
		\draw[->] (-1,0) arc (-30:-95:0.57735);
		
		\node[left] at (-1,0) {\scriptsize$0$}; 
		\draw[fill=black] (-1,0) circle (0.05);
		
		\node[right] at (1,0) {\scriptsize$1$}; 
		\draw[fill=black] (1,0) circle (0.05);

		\node[above] at (1.5,0.288675) {$f_w$}; 
		\draw (2,0) arc (30:150:0.57735);
		\draw[->] (2,0) arc (30:95:0.57735);
		\node[below] at (1.5,-0.288675) {$f_w^{\op}$};
		\draw (2,0) arc (-30:-150:0.57735);
		\draw[->] (1,0) arc (-150:-85:0.57735);

		\node[right] at (2,0) {\scriptsize$w$}; 
		\draw[fill=black] (2,0) circle (0.05);
		
		\node[below] at (0, 0.57735) {$e_1$}; 
		\draw (-1,0) arc (150:30:1.1547);
		\draw[->] (-1,0) arc (150:85:1.1547);
		
		\draw[densely dotted,thick] (0,0.6) -- (0,1);
		
		\node[above] at (0,1) {$e_n$}; 
		\draw (-1,0) arc (180:0:1);
		\draw[->] (-1,0) arc (180:85:1);
		
		\node[above] at (0,-0.57735) {$e_1^{\op}$}; 
		\draw (1,0) arc (-30:-150:1.1547);
		\draw[->] (1,0) arc (-30:-95:1.1547);
		
		\draw[densely dotted, thick] (0,-0.6) -- (0,-1);
		
		\node[below] at (0,-1) {$e_n^{\op}$}; 
		\draw (-1,0) arc (-180:0:1);
		\draw[->] (1,0) arc (0:-95:1);

		\node[right] at (3,1.5) {$\bullet$ $V_2'=\{v,w,0,1\}$};
		\node[right] at (3,.5) {$\bullet$ $E_2'=\{e,e^{\op},f_v,f_v^{\op}, f_w,f_w^{\op}, e_1,e_1^{\op},\ldots, e_n,e_n^{\op}\}$};
		\node[right] at (3,0) {{\color{white}$\bullet$} where $e=(v,w)$, $f_v=(v,0)$, $f_w=(w,1)$,};
		\node[right] at (3,-.5) {{\color{white}$\bullet$} and $e_i=(0,1)$ for $i=1,\ldots,n$};

		\end{tikzpicture}
	\]
Define an edge weighting $\mu'$ on $\Gamma_2'$ as follows:
	\[
		\mu'(e):=\mu(e)\qquad  \mu'(f_v):= \frac{\vphi(p_v) - \alpha}{\vphi(p_v)} \qquad \mu'(f_w) := \frac{\vphi(p_w) - \beta}{\vphi(p_w)}\qquad \mu'(e_1) := \frac{\vphi(p_w) - \beta}{\vphi(p_v) -\alpha},
	\]
and choose $n\geq 2$ and $\mu'(e_2),\ldots, \mu'(e_n)$ distinct (and distinct from $\mu'(e_1)$) such that
	\[
		\< \frac{\mu'(e_i)}{\mu'(e_j)}\colon i,j=1,\ldots, n\> = H'. 
	\]
We then define $\Gamma_1'=(V_1',E_1')$ to be the subgraph with $V_1'=V_2'$ and $E_1'=E_2'\setminus\{e,e^{\op}\}$:
	\[
		\begin{tikzpicture}
		\node[left] at (-1.6,1.25) {$\Gamma_1'$};

		\node[left] at (-2,0) {\scriptsize$v$}; 
		\draw[fill=black] (-2,0) circle (0.05);
		
		\node[above] at (-1.5,0.288675) {$f_v$}; 
		\draw (-2,0) arc (150:30:0.57735);
		\draw[->] (-2,0) arc (150:85:0.57735);
		\node[below] at (-1.5,-0.288675) {$f_v^{\op}$};
		\draw (-2,0) arc (-150:-30:0.57735);
		\draw[->] (-1,0) arc (-30:-95:0.57735);
		
		\node[left] at (-1,0) {\scriptsize$0$}; 
		\draw[fill=black] (-1,0) circle (0.05);
		
		\node[right] at (1,0) {\scriptsize$1$}; 
		\draw[fill=black] (1,0) circle (0.05);

		\node[above] at (1.5,0.288675) {$f_w$}; 
		\draw (2,0) arc (30:150:0.57735);
		\draw[->] (2,0) arc (30:95:0.57735);
		\node[below] at (1.5,-0.288675) {$f_w^{\op}$};
		\draw (2,0) arc (-30:-150:0.57735);
		\draw[->] (1,0) arc (-150:-85:0.57735);

		\node[right] at (2,0) {\scriptsize$w$}; 
		\draw[fill=black] (2,0) circle (0.05);
		
		\node[below] at (0, 0.57735) {$e_1$}; 
		\draw (-1,0) arc (150:30:1.1547);
		\draw[->] (-1,0) arc (150:85:1.1547);
		
		\draw[densely dotted,thick] (0,0.6) -- (0,1);
		
		\node[above] at (0,1) {$e_n$}; 
		\draw (-1,0) arc (180:0:1);
		\draw[->] (-1,0) arc (180:85:1);
		
		\node[above] at (0,-0.57735) {$e_1^{\op}$}; 
		\draw (1,0) arc (-30:-150:1.1547);
		\draw[->] (1,0) arc (-30:-95:1.1547);
		
		\draw[densely dotted, thick] (0,-0.6) -- (0,-1);
		
		\node[below] at (0,-1) {$e_n^{\op}$}; 
		\draw (-1,0) arc (-180:0:1);
		\draw[->] (1,0) arc (0:-95:1);

		\end{tikzpicture}
	\]
We will define a positive linear functional $\vphi'$ on $\cM(\Gamma_2',\mu')$ as in Section~\ref{sec:adding_weighting_to_edges} after carefully choosing $(\Gamma_2')_{\Tr}$; namely, we want to ensure $e\not\in (\Gamma_2')_{\Tr}$. We must consider two cases.\\

\noindent\underline{\textbf{Case 1:}} Assume $\lambda:=\mu'(f_v)\mu'(e_1)\mu'(f_w^{\op})\mu'(e^{\op})\neq 1$. In this case, we choose $(\Gamma_2')_{\Tr}$ to be maximal among subgraphs $(\Xi,\mu')$ of $(\Gamma_2',\mu')$ satsifying $H(\Xi,\mu')=\{1\}$ and which contain $f_v$, $f_w$, and $e_1$. In fact, it is easy to see that this mean $(\Gamma_2')_{\Tr}$ is the graph formed by precisely these three edges and their opposites. We then let $\vphi'$ be the induced positive linear functional on $\cM(\Gamma_2',\mu')$.

Observe that we have
	\[
		\vphi'(p_w) = \mu'(f_v)\mu'(e_1)\mu'(f_w^{\op})\vphi'(p_v) = \frac{\vphi(p_w)}{\vphi(p_v)} \vphi'(p_v).
	\]
Thus, rescaling if necessary, we may assume $\vphi'(p_v)=\vphi(p_v)$ and $\vphi'(p_w)=\vphi(p_w)$. By Lemma~\ref{lem:switcheroo}, Lemma~\ref{lem:adding_edge_and_vertex} (applied twice), and Lemma~\ref{lem:antman1} we have by our choice of $\mu(e_2),\ldots,\mu(e_n)$ that
	\[
		(P \cM(\Gamma_1',\mu') P ,(\vphi')^{P})\cong (P\cM(\Gamma_1,\mu) P,\vphi^{P}),
	\]
where $P:=p_v+p_w$. Consequently
	\[
		(P \cM(\Gamma_2,\mu)P,\vphi^P) \cong \left([P \cM(\Gamma_1',\mu') P]\Asterisk_D W^*(Y_e, D),(\vphi')^P\right)\cong (P\cM(\Gamma_2',\mu')P,(\vphi')^P),
	\]
where $D:=\overset{p_v}{\bbC}\oplus \overset{p_w}{\bbC}$.

Now, consider the subgraph $\Gamma_0'$ of $\Gamma_2'$ consisting of the edges $f_v,f_w, e_1, e$ and their opposites:
	\[
		\begin{tikzpicture}
		\node[left] at (-1.6,1.7) {$\Gamma_0'$};

		\draw (-2,0) arc (185:-5:2); 
		\draw[->] (-2,0) arc (185:87.5:2);
		\node[above] at (0,2.15) {$e$};

		\draw (-2,0) arc (170:10:2.031); 
		\draw[->] (2,0) arc (10:92.5:2.031);
		\node[above] at (0,1.6) {$e^{\op}$};

		\node[left] at (-2,0) {\scriptsize$v$}; 
		\draw[fill=black] (-2,0) circle (0.05);
		
		\node[above] at (-1.5,0.288675) {$f_v$}; 
		\draw (-2,0) arc (150:30:0.57735);
		\draw[->] (-2,0) arc (150:85:0.57735);
		\node[below] at (-1.5,-0.288675) {$f_v^{\op}$};
		\draw (-2,0) arc (-150:-30:0.57735);
		\draw[->] (-1,0) arc (-30:-95:0.57735);
		
		\node[left] at (-1,0) {\scriptsize$0$}; 
		\draw[fill=black] (-1,0) circle (0.05);
		
		\node[right] at (1,0) {\scriptsize$1$}; 
		\draw[fill=black] (1,0) circle (0.05);

		\node[above] at (1.5,0.288675) {$f_w$}; 
		\draw (2,0) arc (30:150:0.57735);
		\draw[->] (2,0) arc (30:95:0.57735);
		\node[below] at (1.5,-0.288675) {$f_w^{\op}$};
		\draw (2,0) arc (-30:-150:0.57735);
		\draw[->] (1,0) arc (-150:-85:0.57735);

		\node[right] at (2,0) {\scriptsize$w$}; 
		\draw[fill=black] (2,0) circle (0.05);
		
		\node[below] at (0, 0.57735) {$e_1$}; 
		\draw (-1,0) arc (150:30:1.1547);
		\draw[->] (-1,0) arc (150:85:1.1547);

		\node[above] at (0,-0.57735) {$e_1^{\op}$}; 
		\draw (1,0) arc (-30:-150:1.1547);
		\draw[->] (1,0) arc (-30:-95:1.1547);

		\end{tikzpicture}
	\]
It follows from Lemma~\ref{lem:base_case3}, Remark~\ref{rem:loop}, and Lemma~\ref{lem:antman1} that
	\[
		\left(P\cM(\Gamma_0', \mu')P,(\vphi')^P\right) \cong (T_{\lambda}, \vphi_{\lambda}) \oplus \overset{r_{w}}{\underset{\vphi(p_{w})[1 - \mu(e^{\op}) - \mu'(f_w)]}{\C}}.
	\]
To finish, we add the edges $e_{2}, \dots, e_{n}$ (and their opposites).  Set $Q:= p_{0} + p_{1}$. Lemmas \ref{lem:antman1} and \ref{lem:antman2} (applied to the above compression), together with the proof of Lemma~\ref{lem:switcheroo} yield that
	\[
		(Q\cM(\Gamma_2', \mu')Q, (\vphi')^Q) \cong (T_H,\vphi_H)\oplus \overset{r_0}{\bbC}\oplus \overset{r_1}{\bbC},
	\]
where $r_0\leq p_{0}$ and $r_1\leq p_{1}$, and each exists if and only if the respective following numbers are positive:
	\begin{align*}
		\vphi'(r_0)&=\vphi'(p_0)\left[ 1- \mu'(f_v^{\op}) - \sum_{j=1}^{n}\mu'(e_{j})\right]\\
		\vphi'(r_1)&=\vphi'(p_1)\left[1- \mu'(f_w^{op})- \sum_{j=1}^{n}\mu'(e_{j}^{\op}) \right].
	\end{align*}
Then Lemmas \ref{lem:antman1} and \ref{lem:antman2} and Remark~\ref{rem:atomic} imply that 
	\[
		(P\cM(\Gamma_2,\mu)P,\vphi^P)\cong (P\cM(\Gamma_2', \mu')P, (\vphi')^P) \cong (T_H, \vphi_H) \oplus \overset{r_{w}}{\underset{\vphi(p_{w})[1 - \mu(e^{\op}) - \mu'(f_w)]}{\C}}
	\]
Using Lemma~\ref{lem:antman2} and Remark~\ref{rem:atomic} completes this case.\\

\noindent\underline{\textbf{Case 2:}} Assume $\mu'(f_{v})\mu'(e_{1})\mu'(f_{w}^{\op})\mu'(e^{\op}) = 1$. In this case we choose $(\Gamma_2')_{\Tr}$ to be the graph consisting of $f_v, f_w, e_2$ and their opposites (recall $\mu'(e_2)\neq \mu'(e_1)$), and let $\vphi''$ be the induced positive linear functional on $\cM(\Gamma_2',\mu')$. Proceeding as in Case 1 we obtain:
	\[
		(P\cM(\Gamma_2',\mu')P, (\vphi'')^P) \cong  (T_H, \vphi_H) \oplus \overset{r_{w}}{\underset{\vphi''(p_{w})[1 - \mu(e^{\op}) - \mu'(f_w)]}{\C}}
	\]
Letting $\vphi'$ be as in Case 1, Proposition~\ref{prop:changestate} implies
	\begin{align*}
		(P\cM(\Gamma_2,\mu)P,\vphi^P)&\cong (P\cM(\Gamma_2',\mu')P,(\vphi')^P)\\
			&\cong (P\cM(\Gamma_2',\mu')P,(\vphi'')^P) \cong (T_H, \vphi_H) \oplus \overset{r_{w}}{\underset{\vphi''(p_{w})[1 - \mu(e^{\op}) - \mu'(f_w)]}{\C}}
	\end{align*}
Finally, appealing to Lemma~\ref{lem:antman2} and Remark~\ref{rem:atomic} completes the proof.
\end{proof}

\subsection{The big reveal}

\begin{thm}
Suppose that $H:=\< \mu(e_1)\cdots \mu(e_n)\colon e_1\cdots e_n \in \Lambda_\Gamma\> <\bbR^+$ is non-trivial. Then exists a faithful, positive linear functional $\vphi$ on $\cM(\Gamma,\mu)$ such that
	\[
		(\cM(\Gamma, \mu), \vphi) \cong (T_H,\vphi_H)\oplus \bigoplus_{v\in V} \overset{r_v}{\bbC},
	\]
where $r_v\leq p_v$ is non-zero if and only if $\sum_{\substack{e\in E\\ s(e)=v}} \mu(e)<1$, in which case
	\[
		\vphi(r_v)=\vphi(p_v)\left[ 1- \sum_{\substack{e\in E\\ s(e)=v}} \mu(e)\right].
	\]
In particular, if
	\[
		\sum_{\substack{e\in E\\ s(e)=v}} \mu(e) \geq 1
	\]
for all $v\in V$, then $(\cM(\Gamma, \mu), \vphi)\cong (T_H,\vphi_H)$.
\end{thm}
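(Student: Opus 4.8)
The plan is to run the ``build the graph'' strategy laid out in Section~\ref{sec:building_the_graph}. First I would fix the loop $\sigma_0 = e_1\cdots e_n$ of minimal length with $\mu(\sigma_0) > 1$ (as in the discussion after the statement of the section), cyclically rotate it using the combinatorial proposition so that all partial products $\mu(e_1)\cdots\mu(e_k)$ are $\geq 1$, and let $\Gamma_0$ be the cycle it generates together with $\sigma_0^{\op}$. I would then fix a maximal ``tracial'' subgraph $\Gamma_{\Tr}\supseteq\{e_1,\dots,e_{n-1}\}$ and the associated functional $\vphi=\vphi\circ\bbE$ on $\cM(\Gamma,\mu)$; note this is automatically faithful since $\bbE$ is faithful and $\vphi$ is faithful on $A$.

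The base case is that $(\cM(\Gamma_0,\mu),\vphi)$ is $(T_{H_0},\vphi_{H_0})$ up to a direct sum of copies of $\bbC$, where $H_0 = H(\Gamma_0,\mu)$: when $n=1$ this is Lemma~\ref{lem:base_case0}, and when $n\geq 3$ (after the cyclic rotation) this is Lemma~\ref{lem:base_case3}; the case $n=2$ is covered by Lemma~\ref{lem:base_case1}. Observe that $H_0 = \<\mu(\sigma_0)\>$ is non-trivial since $\mu(\sigma_0)>1$, so the hypotheses of the subsequent amplification lemmas are met. Next I would enumerate $E\setminus E_0$ and add the edges one pair $(e,e^{\op})$ at a time, in the order: first all edges lying in $\Gamma_{\Tr}$, then the remaining edges. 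Adding a $\Gamma_{\Tr}$-edge either connects a new vertex (Lemma~\ref{lem:adding_edge_and_vertex}, whose hypothesis $\vphi(p_w)=\mu(e)\vphi(p_v)$ holds precisely because $e\in\Gamma_{\Tr}$) or connects two existing vertices inside $\Gamma_{\Tr}$, which does not change $H$; since $V(\Gamma_{\Tr})=V$, after this phase all vertices are present. The remaining edges have both endpoints already in place, and are handled by Lemma~\ref{lem:adding_loop_to_existing_vertex} (loops, including self-loops) and Lemma~\ref{lem:adding_edge_to_existing_vertices} (non-loops; here one orients $e$ so that $\mu(e)\geq 1$). At each step $H$ grows to $H(\Gamma_i,\mu)$ and the invariant ``$\cM(\Gamma_i,\mu) \cong (T_{H(\Gamma_i,\mu)},\vphi_{H(\Gamma_i,\mu)})\oplus\bigoplus_{u} \overset{r_u}{\bbC}$ with $\vphi(r_u)=\vphi(p_u)[1-\sum_{s(f)=u}\mu(f)]$'' is preserved, exactly as in the statements of those three lemmas. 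After all edges are added we reach $\Gamma_N=\Gamma$ with $H(\Gamma,\mu)=H$, giving the claimed formula; the final ``in particular'' is the special case where every $\sum_{s(e)=v}\mu(e)\geq 1$, so all $r_v$ vanish.

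The main obstacle is not any single computation but the bookkeeping: one must check that the inductive invariant in Lemmas \ref{lem:adding_edge_and_vertex}--\ref{lem:adding_edge_to_existing_vertices} is stated in a form that is genuinely closed under the three operations and matches the base cases, and in particular that when we begin adding non-$\Gamma_{\Tr}$ edges the graph $\Gamma_i$ already contains a non-trivial loop so that $H(\Gamma_i,\mu)$ is non-trivial and $(T_{H(\Gamma_i,\mu)},\vphi_{H(\Gamma_i,\mu)})$ is a free Araki--Woods factor (this is why we insist on building $\Gamma_0$ first). One must also verify that the choice of $\Gamma_{\Tr}$ can be made to contain $e_1,\dots,e_{n-1}$ simultaneously with being maximal, and that altering $\Gamma_{\Tr}$ along the way (needed in Lemma~\ref{lem:adding_edge_to_existing_vertices}) does not change the isomorphism class, which is exactly the content of Proposition~\ref{prop:changestate}. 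Finally, as emphasized in the introduction, throughout one must be careful that every intermediate isomorphism is state-preserving, so that the functional $\vphi$ produced at the end is the one advertised.
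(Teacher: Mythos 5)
Your proposal is correct and follows essentially the same route as the paper's proof: base case for $\Gamma_0$ via Lemmas~\ref{lem:base_case0}, \ref{lem:base_case1}, and \ref{lem:base_case3}, then successive addition of the $\Gamma_{\Tr}$-edges (where the hypothesis $\vphi(p_w)=\mu(e)\vphi(p_v)$ of Lemma~\ref{lem:adding_edge_and_vertex} holds automatically) followed by the remaining edges via Lemmas~\ref{lem:adding_loop_to_existing_vertex} and \ref{lem:adding_edge_to_existing_vertices}. The bookkeeping concerns you raise (the inductive invariant being closed under the three operations, and the role of Proposition~\ref{prop:changestate}) are exactly the points the paper relies on.
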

\begin{proof}
Let $\sigma_0=e_1\cdots e_n$, $\Gamma_{\Tr}$, $\Gamma_0$, and $\vphi$ be as in the beginning of this Section. Observe that if $n=1$, then $(\Gamma_0,\mu,\vphi)$ are as in Lemma~\ref{lem:base_case0}; if $n=2$, then $(\Gamma_0,\mu,\vphi)$ are as in Lemma~\ref{lem:base_case1}; and if $n\geq 3$, then $(\Gamma_0,\mu,\vphi)$ are as in Lemma~\ref{lem:base_case3}. In each case we obtain
	\[
		(\cM(\Gamma_0,\mu), \vphi) \cong (T_\lambda, \vphi_\lambda) \oplus \bigoplus_{v\in V_0} \overset{r_v}{\bbC},
	\]
where $\lambda=\frac{1}{\mu(e_1)\cdots\mu(e_n)}$ and $r_v\leq p_v$ satisfy
	\[
		\vphi(r_v) = \vphi(p_v) \left[ 1- \sum_{\substack{e\in E\\ s(e) = v}} \mu(e)\right]
	\]
(and are zero if the above quantity is non-positive).

We first succesively add the rest of the edges in $\Gamma_{\Tr}$, and repeatedly apply Lemmas~\ref{lem:adding_edge_and_vertex}, \ref{lem:adding_loop_to_existing_vertex}, and \ref{lem:adding_edge_to_existing_vertices}. Note that the hypothesis $\vphi(p_w)=\mu(e)\vphi(p_v)$ in Lemma~\ref{lem:adding_edge_and_vertex} is always satisfied because we are expanding along $\Gamma_{\Tr}$ which determined $\vphi$. This resulting graph will contain all the vertices $V$ by the maximality of $\Gamma_{\Tr}$, and so we simply add the rest of the edges one-by-one by applying Lemmas~\ref{lem:adding_loop_to_existing_vertex} and \ref{lem:adding_edge_to_existing_vertices}.  At each step of this construction we obtain the predicted isomorphism, and so after finitely many steps we obtain the desired isomorphism for $(\cM(\Gamma,\mu),\vphi)$.
\end{proof}


\section*{Appendix}


\subsection*{Some infinite index subfactors}

Let $\TL_{\bullet}$ denote the Temperley Lieb planar algebra with parameter $\delta \in [2, \infty)$.  In \cite{MR2732052}, one considers $\Gr(\TL) = \oplus_{n=1}^{\infty} \TL_{n}$.  
Recall from \cite{MR2732052} that $\Gr(\TL)$ has the structure of a graded algebra with graded multiplication $\wedge$ and normalized Voiculescu trace $\tau$:
	\[
		\tau(x \wedge y)=\tau\left(
							\begin{tikzpicture}[baseline = .1cm]
							\draw (0, 0)--(0, .6);
							\draw (.8, 0)--(.8, .6);
							\nbox{unshaded}{(0,0)}{.3}{0}{0}{$x$}
							\nbox{unshaded}{(.8,0)}{.3}{0}{0}{$y$}
							\end{tikzpicture}
						\right)=
							\begin{tikzpicture}[baseline=.3cm]
							\draw (.4,0)--(.4,.6);
							\draw (-.4,0)--(-.4,.6);
							\nbox{unshaded}{(-.4,0)}{.3}{0}{0}{$x$}
							\nbox{unshaded}{(.4,0)}{.3}{0}{0}{$y$}
							\nbox{unshaded}{(0,.8)}{.3}{.4}{.4}{$\Sigma \, \TL$}	
							\end{tikzpicture}.
	\]
Moreover, this graded algebra acts on itself by left and right multiplication, which is bounded with respect to the $\|\cdot \|_2$-norm induced by this trace. Let $\cN = (\Gr(\TL), \tau)''$.  It was shown in \cite{MR3110503} that $\cN \cong L(\F_{\infty})$.

Jones and Penneys in \cite[Section 6.4]{JP17} studied a directed graph $\Gamma=(V, E)$ with edge-weighting $\mu$ as in Section \ref{sec:adding_weighting_to_edges} with the additional requirement that $(\Gamma,\mu)$ is \emph{balanced}:
	\[
		\displaystyle\sum_{e\colon s(e) = v} \mu(e) = \delta\qquad \forall v \in V. 
	\]
Fixing $v \in V$, they studied the loop algebra $\textbf{A}_{v}$ which is spanned by formal linear combinations of loops based at $v$.  It comes equipped with the following $*$-algebra structure and state $\phi$:
	\begin{itemize}

	\item $(e_{1}\cdots e_{n}) \cdot (f_{1}\cdots f_{m}) = e_{1}\cdots e_{n}f_{1}\cdots f_{m}$

	\item $(e_{1}\cdots e_{n})^{*} = \sqrt{\mu(e_{1})\cdots\mu(e_{n})}e_{n}\cdots e_{1}$

	\item With $NC_{2}([n])$ the set of non-crossing pair partitions on $\{1, \cdots, n\}$,
		\[
			\phi(e_{1}\cdots \e_{n}) =
				\begin{tikzpicture}[baseline=.3cm]
				\draw (0,0)--(0,.6);
				\nbox{unshaded}{(0,0)}{.3}{.5}{.5}{$e_{1}\cdots e_{n}$}
				\nbox{unshaded}{(0,.8)}{.3}{.4}{.4}{$\Sigma \, \TL$}		
				\end{tikzpicture}
 			:= \sum_{\pi \in NC_{2}([n])} \prod_{ \substack{ i \sim_{\pi} j \\ i < j}} \sqrt{\mu(e_{i})}\delta_{e_{i} = e_{j}^{op}}
		\]
\end{itemize}
The following proposition is a standard combinatorial argument involving the elements $Y_{e} \in (\cM(\Gamma, \mu), \vphi)$.  
	
\begin{prop*} The map $p_{v}\C\langle Y_{e} : e \in E\rangle p_{v} \rightarrow \textbf{A}_{v}$ given by $Y_{e_{1}}\cdots Y_{e_{n}} \mapsto e_{1}\cdots e_{n}$ for every loop based at $v$ is a $*$-algebra isomorphism statisfying $\phi(e_{1}\cdots e_{n}) = \vphi(Y_{e_{1}}\cdots Y_{e_{n}})$.
\end{prop*}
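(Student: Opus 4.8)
The plan is to check that the proposed map is well-defined, multiplicative, $*$-preserving, and state-intertwining, and then to conclude it is an isomorphism by a standard faithfulness argument. First I would observe that $p_v\C\langle Y_e\colon e\in E\rangle p_v$ is spanned by words $Y_{e_1}\cdots Y_{e_n}$ where $e_1\cdots e_n$ is a loop based at $v$ (this is immediate from $p_{s(e)}Y_ep_{t(e)}=Y_e$: any word surviving multiplication by $p_v$ on both sides must be a concatenation of edges forming such a loop, and the empty word maps to $p_v$, i.e.\ the identity of $\mathbf{A}_v$). So the assignment $Y_{e_1}\cdots Y_{e_n}\mapsto e_1\cdots e_n$ does define a linear map onto $\mathbf{A}_v$, and it is clearly multiplicative since concatenation of loops corresponds to juxtaposition of the defining monomials in both algebras. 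It is also surjective by construction, and injective because the monomials $e_1\cdots e_n$ are a basis of $\mathbf{A}_v$ and the corresponding $Y$-words are linearly independent (which follows once we know $\phi$ and $\vphi$ agree and $\vphi$ is faithful — see below).

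The $*$-compatibility is a direct computation: in $\cM(\Gamma,\mu)$ one has $Y_e^*=\sqrt{\mu(e)}\,Y_{e^{\op}}$, so
\[
	(Y_{e_1}\cdots Y_{e_n})^* = Y_{e_n}^*\cdots Y_{e_1}^* = \sqrt{\mu(e_1)\cdots\mu(e_n)}\; Y_{e_n^{\op}}\cdots Y_{e_1^{\op}},
\]
which matches the formula $(e_1\cdots e_n)^*=\sqrt{\mu(e_1)\cdots\mu(e_n)}\,e_n^{\op}\cdots e_1^{\op}$ defining the involution on $\mathbf{A}_v$ (note $e_n\cdots e_1$ in the paper's notation for $\mathbf{A}_v$ should be read as the reversed loop $e_n^{\op}\cdots e_1^{\op}$). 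So the map is a $*$-homomorphism of $*$-algebras.

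The substantive step is the identity $\phi(e_1\cdots e_n)=\vphi(Y_{e_1}\cdots Y_{e_n})$, and this is the part I expect to be the main obstacle — though ``obstacle'' is relative, since it is a bookkeeping computation rather than a conceptual one. I would expand $\vphi(Y_{e_1}\cdots Y_{e_n})=\phi\circ\bbE(Y_{e_1}\cdots Y_{e_n})$ using $Y_{e_j}=\ell(e_j)+\sqrt{\mu(e_j)}\,\ell(e_j^{\op})^*$ and the Fock-space description of $\bbE(x)=\langle 1_A\mid x1_A\rangle_A$. A term in the expansion contributes to the vacuum expectation only if the creation and annihilation operators pair up in a non-crossing fashion (each annihilation $\ell(e_j^{\op})^*$ must cancel a creation $\ell(e_i)$ appearing later in the product, which forces $e_i=e_j^{\op}$ and forces the matching to be non-crossing because creations and annihilations act on a stack), and each such non-crossing pairing $\pi\in NC_2([n])$ with $i\sim_\pi j$, $i<j$ contributes the scalar $\prod_{i\sim_\pi j,\, i<j}\sqrt{\mu(e_i)}\,\delta_{e_i=e_j^{\op}}$ from the square-root coefficients of the $Y$'s and the $A$-valued inner products $\langle e\mid e'\rangle_A=\delta_{e,e'}p_{t(e)}$. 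This is exactly the sum over $NC_2([n])$ defining $\phi$ via the $\Sigma\,\TL$ diagram — indeed the Temperley--Lieb trace on $\Gr(\TL)$ with parameter $\delta$ evaluates a loop word precisely as a sum over non-crossing pairings with these weights, since balancedness $\sum_{s(e)=v}\mu(e)=\delta$ is exactly what makes the edge-weighted Fock space model the $\delta$-dimensional Temperley--Lieb picture. The cleanest route is to cite the analogous moment computation in \cite[Lemma 2.6]{MR3679687} together with Proposition~\ref{prop:edges_elements_are_eigenops} (which already packages the relevant recursion for $\vphi$ of a $Y$-word), and then match it term-by-term with the combinatorial formula for $\phi$; alternatively one sets up a joint recursion on $n$, splitting off the pairing partner of $e_1$, and checks both sides satisfy the same recursion with the same base case $n=0$ (both giving $1$, resp.\ $\vphi(p_v)/\vphi(p_v)$ after normalization). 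Finally, once $\phi=\vphi$ on $\mathbf{A}_v$ and $\vphi$ is faithful on $\cM(\Gamma,\mu)$ (it is a faithful state by construction, being $\phi_0\circ\bbE$ for a faithful $\phi_0$ on $A$ and $\bbE$ faithful), the map is injective — any element of the kernel is a $\vphi$-null, hence zero, element — so it is a $*$-isomorphism intertwining the states, as claimed.
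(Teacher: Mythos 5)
Your proposal follows exactly the route the paper has in mind: the paper offers no written proof, remarking only that this is ``a standard combinatorial argument involving the elements $Y_e$'', and your expansion of $\vphi(Y_{e_1}\cdots Y_{e_n})=\vphi\circ\bbE(Y_{e_1}\cdots Y_{e_n})$ into a sum over non-crossing pairings---with each pair $\{i<j\}$ contributing $\sqrt{\mu(e_i)}\,\delta_{e_i=e_j^{\op}}$ from the annihilator $\sqrt{\mu(e_i)}\ell(e_i^{\op})^*$ meeting the creator $\ell(e_j)$---is precisely that argument, and it correctly reproduces the defining formula for $\phi$ on $\mathbf{A}_v$. Your $*$-compatibility check and your reading of $e_n\cdots e_1$ as the reversed loop $e_n^{\op}\cdots e_1^{\op}$ are also right (the literal string $e_n\cdots e_1$ would not even be a path), as is your observation that the statement implicitly normalizes $\vphi(p_v)=1$, i.e.\ uses $\vphi^{p_v}$ when $v\neq *$.

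The one step that does not hold up as written is your justification of linear independence of the words $Y_{e_1}\cdots Y_{e_n}$ over distinct loops, which is what makes the map of the proposition \emph{well-defined} (injectivity is then automatic, since the loops are a basis of $\mathbf{A}_v$ by fiat). You claim this ``follows once we know $\phi$ and $\vphi$ agree and $\vphi$ is faithful,'' but tracing that argument through: from $\sum_i c_iY_{w_i}=0$ one gets $\sum_i c_i\,\vphi(Y_{w_j}^*Y_{w_i})=0$ for all $j$, hence by the state identity $\phi\bigl(w_j^*\sum_i c_iw_i\bigr)=0$ for all $j$, and to conclude $c_i=0$ you need $\phi$ to be positive \emph{definite} on the formal loop algebra $\mathbf{A}_v$---which is not given independently and is in fact one of the things the isomorphism is meant to deliver. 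Faithfulness of $\vphi$ on $\cM(\Gamma,\mu)$ says nothing here, since the operator $\sum_i c_iY_{w_i}$ is already zero. The clean fix is a leading-term argument in the Fock module $\cF(\Gamma)$: for a loop $w=e_1\cdots e_n$ based at $v$ one has $Y_{e_1}\cdots Y_{e_n}\,p_v = e_1\otimes\cdots\otimes e_n + (\text{terms of tensor degree} < n)$, and the top-degree tensors attached to distinct loops are orthogonal, so a downward induction on word length kills all coefficients in any vanishing linear combination. With that substitution the proof is complete.
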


This proposition allows us to conclude that $(\cM, \phi) := (\textbf{A}_{v}, \phi)'' \cong (T_{H}, \vphi_{H})$ for $H = H(\Gamma, \mu)$. (Note that $\delta\geq 2$, and so requiring that $\mu$ is balanced ensures $\cM(\Gamma,\mu)$ is a factor.)  Section 6.4 of \cite{JP17} studied an inclusion of $i : \cN \hookrightarrow \cM$ as follows.  Identifying each diagram $x\in \Gr(\TL)$ with a partition $\pi \in NC_{2}([2n])$ in the natural way, one has 
	\[
		i(x) = \sum_{\substack{e_{1}\cdots e_{2n}: \\ i \sim_{\pi} j, \, i < j \Rightarrow e_{i} = e_{j}^{\op}}} \left(\prod_{ \substack{i \sim_{\pi} j \\ i < j}} \mu(e_{i}) \right)e_{1}\cdots e_{2n}
	\]
This inclusion preserves the tracial state $\tau$ on $\cN$, and so $\cN \hookrightarrow \cM^{\phi}$.  

Jones and Penneys showed that the subfactor $\cN \subset \cM$ is of infinite index, irreducible ($\cN' \cap \cM = \C$),  discrete ($L^{2}(\cM)$ decomposes as a direct sum of irreducible, finite index $\cN-\cN$  bimodules), and that the subfactor $\cN \subset \cM$ is determined by $(\Gamma, \mu)$.  Our work shows that the inclusion $\cN \subset \cM$ can be realized as an inclusion $L(\F_{\infty}) \subset (T_{H}, \vphi_{H})^{\vphi_{H}} \subset (T_{H}, \vphi_{H})$.


\subsection*{Summary of Notation}
{\footnotesize
\begin{center}	
	\begin{tabular}{|c| p{3.8in} |c|}
	\hline
	Notation & Description & Page\\
	\hline
	$\overset{p_1,\ldots, p_n}{\underset{t_1,\ldots,t_n}{M_n(\bbC)}}$, $\overset{p}{\underset{t}{(A,\phi)}}$ & implicit states or weighting on states & \pageref{notation:implicit_states}\\
	\hline
	$\psi_\lambda$ & a state on $\cB(\cH)$ or $M_n(\bbC)$ determined by a set of matrix units & \pageref{notation:psi}\\
	\hline
	$\phi^p$ & compression of a positive linear functional & \pageref{notation:state_compression}\\
	\hline
	$(T_\lambda,\vphi_\lambda)$ & the free Araki--Woods factor generated by two variables and its free quasi-free state & \pageref{notation:fAWf}\\
	\hline
	$y_\lambda$ & a generalized circular element & \pageref{notation:gce}\\
	\hline
	$(T_H,\vphi_H)$ & a free product of free Araki--Woods factors & \pageref{notation:fAWfH}\\
	\hline
	 $z(p\colon M)$ & the central support of a projection $p$ in a von Neumann algebra $M$ & \pageref{notation:central_support}\\
	\hline
	$\Gamma=(V,E)$ & a graph with vertices $V$ and edges $E$ &\pageref{notation:graph}\\
	\hline
	$s(e)$, $t(e)$, $e^{\op}$ & the source, target, and opposite of an edge $e$ & \pageref{notation:edge}\\
	\hline
	$\mu$ & an edge weighting & \pageref{notation:mu}\\
	\hline
	$Y_e$, $u_e$ & an edge operator and its polar part & \pageref{notation:Y_e}\\
	\hline
	$S(\Gamma,\mu)$ & the $C^*$-algebra associated to a graph $\Gamma$ and an edge weighting $\mu$ & \pageref{notation:S}\\
	\hline
	$\mc{M}(\Gamma,\mu)$ & the von Nemann algebra associated to a graph $\Gamma$ and an edge weighting $\mu$ & \pageref{notation:M}\\
	\hline
	$\Pi_\Gamma$ & the space of paths in a graph $\Gamma$ & \pageref{notation:paths}\\
	\hline
	$\Lambda_\Gamma$ & the space of loops in a graph $\Gamma$ & \pageref{notation:loops}\\
	\hline
	$H(\Gamma,\mu)$ & the subgroup of $\R^+$ generated by $\mu(e_1)\cdots \mu(e_n)$ for loops $e_1\cdots e_n$ in a graph $\Gamma$ with edge weighting $\mu$ & \pageref{notation:H}\\
	\hline
	$\Gamma_{\Tr}$ & a maximal subgraph of $(\Gamma,\mu)$ subject to the condition $H(\Gamma_{\Tr},\mu)$ is trivial & \pageref{notation:tracial_subgraph}\\
	\hline
	$\vphi$ & a faithful normal positive linear functional on $\cM(\Gamma,\mu)$ induced by $\Gamma_{\Tr}$ & \pageref{notation:phi}\\
	\hline
	\end{tabular}
\end{center}
}

\bibliographystyle{amsalpha}
\bibliography{bibliography}

\end{document}